\crefname{equation}{}{}
\apptocmd{\sloppy}{\hbadness 10000\relax}{}{} 
\crefname{algocf}{Algorithm}{Algorithms}
\crefname{equation}{}{} 
\crefname{conjecture}{Conjecture}{Conjectures} 
\colorlet{refkey}{orange!20}
\colorlet{labelkey}{blue!30}
\crefname{algocf}{Algorithm}{Algorithms}
\numberwithin{equation}{section}
\newtheorem{theorem}{Theorem}[section]
\newtheorem{proposition}[theorem]{Proposition}
\newtheorem{lemma}[theorem]{Lemma}
\crefname{claim}{Claim}{Claims}
\newtheorem*{question*}{Question}
\theoremstyle{definition}
\newtheorem{definition}[theorem]{Definition}
\newtheorem*{definition*}{Definition}
\theoremstyle{remark}
\newtheorem*{remark}{Remark}
\newtheorem*{remarks}{Remarks}
\newcommand{\snorm}[1]{\lVert#1\rVert}
\newcommand{\mb}{\mathbb}
\newcommand{\mbf}{\mathbf}
\newcommand{\mc}{\mathscr}
\newcommand{\wh}{\widehat}
\newcommand{\wt}{\widetilde}
\newcommand{\eps}{\varepsilon}
\renewcommand{\le}{\leqslant}
\renewcommand{\ge}{\geqslant}
\newcommand\Z{\mathbf{Z}}
\newcommand\Q{\mathbf{Q}}
\newcommand\C{\mathbf{C}}
\newcommand\R{\mathbf{R}}
\newcommand\N{\mathbf{N}}
\def\sml{\operatorname{sml}}
\def\lrg{\operatorname{lrg}}
\newcommand\diam{\operatorname{diam}}
\newcommand{\md}[1]{\ensuremath{(\operatorname{mod}\, #1)}}
\newcommand{\mdsub}[1]{\ensuremath{(\mbox{\scriptsize mod}\, #1)}}
\title{Bounds for monochromatic solutions to $\{x+y,xy\}$}
\author[A1]{Ben Green}
\address{Mathematical Institute, Andrew Wiles Building, Radcliffe Observatory Quarter, Woodstock Rd, Oxford OX2 6QW, UK}
\email{ben.green@maths.ox.ac.uk}
\author[A2]{Mehtaab Sawhney}
\address{Department of Mathematics, Columbia University, New York, NY 10027}
\email{m.sawhney@columbia.edu}
\begin{document}

\begin{abstract}
Let $r$ be a sufficiently large positive integer, and let $N \ge \exp\exp(r^{50})$. Then any $r$-colouring of $[N]$ contains a monochromatic copy of $\{x+y,xy\}$ with $x > y > 2$.
\end{abstract}

\maketitle
\setcounter{tocdepth}{1}
\tableofcontents

\section{Introduction}

The key result in this work is an effective bound for $r$-colourings of the natural numbers $\N$ containing a monochromatic copy of $\{x+y,xy\}$.
\begin{theorem}\label{thm:main}
There is a constant $r_0$ such that the following holds. Let $r \ge r_0$ be an integer and let $N \ge \exp\exp(r^{50})$. Then any $r$-colouring of $[N] :=\{1,\ldots,N\}$ contains a monochromatic copy of $\{x+y,xy\}$ with $x > y > 2$.
\end{theorem}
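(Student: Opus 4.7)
By pigeonhole, one colour class $C \subset [N]$ satisfies $|C| \ge N/r$, so the task reduces to showing that $C$ contains a pair of the form $(x+y, xy)$ with $x > y > 2$. The double-exponential threshold $N \ge \exp\exp(r^{50})$ strongly suggests splitting the proof into two quantitative steps, each costing a single exponential in $r$: first, a Ramsey-theoretic extraction of a multiplicatively structured piece of the colouring, and second, an additive/Fourier-analytic step that produces the required additive--multiplicative configuration inside it.

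For the extraction step I would work in logarithmic coordinates. Consider the colouring restricted to the powers of a single small prime $p$ or, better, to the two-parameter grid $\{p^a q^b : 0 \le a,b \le K\}$ for distinct primes $p,q \lesssim \exp(r^{O(1)})$ and $K \lesssim r^{O(1)}$. Applying a quantitative Gallai/Hales--Jewett-type result (or iterated van der Waerden) to the colouring of this grid, viewed additively in the exponents, yields a large monochromatic affine sub-grid. In multiplicative coordinates this becomes a monochromatic geometric progression or multiplicative Bohr set inside $[N]$. This application of a Ramsey-type statement is responsible for one exponential loss in $r$ and forces $\log N \gtrsim \exp(r^{O(1)})$.

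Inside this multiplicatively structured piece I would then set up an additive count to produce the desired $(x+y, xy) \in C \times C$. One natural parameterisation is to let $y$ range over a structured subset of the monochromatic geometric set and, for each such $y$, seek $x$ with both $x+y \in C$ and $xy \in C$. The resulting count should be analysable by additive Fourier analysis on $\Z/N\Z$: the main term is positive and of order $\ge |C|^2/\mathrm{poly}(\log N, r)$, while the error sums of shape $\sum_x f(x+y)\, g(xy)$ are controlled by Cauchy--Schwarz plus an $L^\infty$-Fourier estimate, exploiting the rigidity of the multiplicative variable. When the direct estimate fails, a density-increment step passes to a refined Bohr set or arithmetic progression inside the multiplicative substructure; the depth of the iteration is at most $r^{O(1)}$, accounting for the second exponential loss and ultimately for the $r^{50}$ exponent.

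The main obstacle is the genuinely hybrid additive--multiplicative nature of the configuration $(x+y, xy)$. Standard inverse theorems control either additive or multiplicative structure but not both, and transferring structural information between the two sides is typically expensive. Producing a bilinear estimate for $\sum_{x,y} \one_C(x+y)\, \one_C(xy)$ with losses polynomial in $r$ (rather than tower-type) is almost certainly the technical heart of the proof; I would expect this to require a local Fourier analysis on a carefully chosen hybrid additive-times-multiplicative Bohr set, together with delicate bookkeeping of the map $(x,y) \mapsto (x+y, xy)$ to ensure the main term dominates.
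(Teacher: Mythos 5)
Your opening reduction is where the argument breaks, and it breaks irreparably: passing to a colour class $C$ with $|C|\ge N/r$ and then trying to prove that \emph{every} dense set contains a pair $\{x+y,xy\}$ cannot work, because the monochromatic $\{x+y,xy\}$ problem is a partition-regularity statement, not a density statement. The odd numbers have density $\tfrac12$ in $[N]$ yet contain no configuration $\{x+y,xy\}$ at all: $xy$ odd forces both $x$ and $y$ odd, and then $x+y$ is even. (Residue-class obstructions of this kind are exactly why the lower-bound colouring in the paper works.) Any correct proof must therefore exploit \emph{which} colour class is chosen, not merely its size. In the paper the class $A$ is selected by a pigeonhole over a fixed set $B_0=\{V^{4^i}: i\le r^{O(1)}\}$ of highly divisible numbers, $V=(r^{O(1)})!$, so that $\mathbb{E}^{\log}_{n\in[N]}1_A(bn)\gg 1/r$ for many $b\in B_0$; the divisibility relations inside $B_0$ (for instance that $b'/b^2$ is an integer) are precisely what later permits the deformation of pairs $\{x,xy\}$ into $\{x+y,xy\}$. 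Nothing of this kind is recoverable from a bare density hypothesis on $C$.

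Beyond that first step, your programme diverges from what is actually needed and leaves the decisive estimate unproved, as you yourself acknowledge. The paper deliberately avoids Gallai/Hales--Jewett or van der Waerden extractions on grids $\{p^aq^b\}$ (that is essentially Moreira's quantitative route and costs far more than one exponential); instead the multiplicative richness is produced analytically, by iterating Elliott's inequality over $r^{O(1)}$ prepared scales of primes to obtain many monochromatic pairs $\{x,xy\}$ with $y$ a product of primes from prescribed windows. The ``technical heart'' you point to --- a bilinear bound for counts of the shape $\sum_{n} f_1(n+\lambda p_1\cdots p_k)\,f_2(np_1\cdots p_k)$ --- is exactly the content of the paper's inverse theorem, proved by Fourier analysis combined with concatenation-style Cauchy--Schwarz, log-free Weyl estimates for products of primes and their squares, and a Selberg-majorant Fourier decomposition to avoid Siegel-zero ineffectivity; the final passage from $\{x,xy\}$ to $\{x+y,xy\}$ is then run through almost-periodic averaging projections at a hierarchy of scales with an energy-pigeonholing (not a Bohr-set density increment). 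So even granting a repaired first step, the proposal names the main difficulty but supplies neither the structural input (the set $B_0$ and Elliott positivity) nor the analytic input (the inverse theorem and scale-chaining) that the proof requires.
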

\begin{remarks} The constant $r_0$ is effectively computable. Furthermore, minor tweaks to the numerics in our arguments would allow one to replace $50$ with a slightly smaller constant. However, to obtain a `small' constant (less than 10, say) would appear to require new ideas. Finally we make no effort to compute an actual value of $r_0$. Due to arguments regarding the possible existence of a Siegel zero in \cref{appC} (among other reasons), to do so would be rather painful.
\end{remarks}
In the other direction, for all $r$ there is an $r$-colouring of $[N]$ with no monochromatic $\{x + y, xy\}$ with $x > y > 2$ when $N = \frac{1}{2}(3^{r} + 7)$, and therefore \cref{thm:main} is at most one logarithm from the optimal result. To obtain such a colouring, use colour $i$ for $[a_i, a_{i + 1})$ where $a_i := \frac{1}{2}(3^i + 9)$, $i = 0,\dots, r - 1$, and any colour for $\{1,2,3,4\}$. The point here is that $a_{i + 1} = 3(a_i - 3)$ and so if $x + y \in [a_i, a_{i + 1})$ with $x, y \ge 3$ then $xy \ge a_{i+1}$. We never have $x+y$ or $xy \in \{1,2,3,4\}$.

We remark that obtaining effective bounds for the pattern $\{x+y,xy\}$ has been raised by both the first author \cite[Problem~22]{GreOp} and by Richter \cite[Question~7.2]{Ric25}. 

\subsection{Previous results}
\cref{thm:main} guarantees the existence of infinitely many pairs $\{x+y,xy\}$ given a fixed $r$-colouring of $\N$. To see this, suppose that we have found $d$ such monochromatic pairs $\{x_i + y_i, x_i y_i\}$, $i = 1,\dots, d$. We modify our colouring of $\N$ to an $(r + 2d)$-colouring in which $x_1,\dots, x_d, y_1,\dots, y_d$ are given distinct colours, different to the original $r$, and then use \cref{thm:main} to find a further pair $\{x_{d+1} + y_{d+1}, x_{d+1}y_{d+1}\}$. (Alternatively one may observe that our proof of \cref{thm:main} may be trivially modified to give many monochromatic pairs as $N \rightarrow \infty$ for a fixed value of $r$.) 

This existential statement was first proven in a celebrated paper of Moreira \cite{Mor17}; furthermore Moreira in fact guarantees a monochromatic pattern of the form $\{x,x+y,xy\}$. This result represents substantial progress towards Hindman's conjecture that any $r$-colouring of $\N$ contains a monochromatic copy of $\{x,y,x+y,xy\}$. Recently there has been further important progress towards Hindman's conjecture in various settings. Bowen \cite{Bow25} has proven that any $2$-colouring of $\N$ contains infinitely many copies of $\{x,y,x+y,xy\}$. Bowen and Sabok \cite{BS24} have proven that any $r$-colouring of $\Q^{\neq 0}$ contains a copy of $\{x,y,x+y,xy\}$ and Alweiss  \cite{Alw23} extended this to patterns of the form $\{\sum_{i\in S}x_i, \prod_{i\in S}x_i\}$ where $S\subseteq [k]$ ranges over all nontrivial subsets. Additionally Alweiss \cite{Alw24} has given an alternate proof of the result of Moreira. However even when restricting to $\{x+y,xy\}$ the proofs of Moreira and Alweiss give at least tower--type bounds due to highly recursive Ramsey type arguments. We remark that while the main argument of Moreira is purely qualitative, he indicates in \cite[Section 5]{Mor17} a variant argument using van der Waerden's theorem (or Szemer\'edi's theorem) which does give explicit finite bounds when used with appropriate bounds for Szemer\'edi's theorem due to Gowers \cite{Gow01}. 

Recently, Richter \cite{Ric25} provided a quite different, more analytic, proof of Moreira's result about $\{x + y, xy\}$. The argument of Richter is quite infinitary in flavour and gives no bounds. However, as will be discussed shortly, our methods in this paper are very strongly influenced by those of Richter.

One may additionally compare \cref{thm:main} with bounds for certain Schur-type equations. For instance, for the configuration $\{x,y,x+y\}$, bounds of the form $\exp(r^{O(1)})$ are known due to work of Cwalina and Schoen \cite{CS17}. Note that this (by restricting to powers of $2$) gives an essentially double-exponential bound for $\{x,y,xy\}$. Furthermore for more general linear systems $A$, bounds of the form $\exp\exp(r^{O_A(1)})$ are proven in generality by Sanders \cite{San20}, and good control on the implicit constant $O_A(1)$ for many systems may be found in work of Chapman and Prendiville \cite{CP20}. 

\subsection{Proof outline}\label{outline}
Our work draws heavily on recent beautiful work of Richter \cite{Ric25}; many of the ideas presented in this section are drawn from this work. 

Logarithmic averages play a central role, so we define these before turning to an outline of the proof. If $\mathcal{N}$ is a finite set of positive integers and if $f : \mathcal{N} \rightarrow \C$ is a function, we write 
\[ \mb{E}_{n \in \mathcal{N}}^{\log} f(n) := \frac{\sum_{n \in \mathcal{N}} f(n)/n }{\sum_{n \in \mathcal{N}} 1/n}.\] We write $\mb{E}_{n_1 \in \mathcal{N}_1, n_2 \in \mathcal{N}_2}^{\log}$ as a shorthand for $\mb{E}_{n_1 \in \mathcal{N}_1}^{\log} \mb{E}^{\log}_{n_2 \in \mathcal{N}_2}$ (and similarly for higher iterates). We will often use this notation when $\mathcal{N} = [N] = \{1,\dots, N\}$.

Suppose now that $[N] = A_1 \cup \cdots \cup A_r$ is an $r$-colouring of $[N] = \{1,\dots, N\}$ in which we seek to find a monochromatic pair $x + y, xy$. The colour class in which this pair will be found is identified right at the very start of the proof. We take $B_0$ to be a fixed set of $r^{O(1)}$ `highly divisible' numbers; the precise set we take is $B_0 := \{V^{4^i} : i = 1,2,\dots, r^{C_1}\}$, where $V = (r^{C_2})!$ for appropriate constants $C_1,C_2$. By the pigeonhole principle there is some $A = A_{\ell}$ which contains many multiples of elements of $B_0$ in the sense that $\mb{E}_{n \in [N]}^{\log} 1_A(b n) \gg 1/r$ for at least $\gg r^{C_1 - 1}$ elements $b \in B_0$. We will find the desired configuration $\{x + y, xy\}$ in this colour class, which we fix for the rest of the argument.

The next key idea, which follows \cite{Ric25} very closely, is to locate a `rich' set of pairs $\{x , xy\}$ in $A$. This is done using a variant of arguments of Ahlswede, Khachatrian and S{\'{a}}rk{\"{o}}zy \cite{AKS99} and Davenport and Erd\H{o}s \cite{DE36}. This argument involves the choice of various auxiliary sets of primes (for details see \cref{sec61}) and a key component is Elliott's inequality from multiplicative number theory (given in \cref{lem:Elliot} in the form we shall need). The output of this argument is many instances of the inequality
\begin{equation}\label{ramsey-outcome} \mb{E}_{n \in [N],p_{1}\in \mc{P}_{1},\dots, p_{k}\in \mc{P}_{k}}^{\log} 1_A(bn) 1_A(b' p_1 \cdots p_k n) \gg r^{-O(1)} \end{equation} for some fixed $b \in B_0$ and many $b' \in B_0$ with $b < b'$ and associated $k$ where $2 \le k \ll r^{O(1)}$, and where the sets $\mc{P}_i$ of primes can be chosen at many different scales. (The precise statement we are sketching here may be found at \cref{main-ramsey-statement}.) This provides the aforementioned rich source of configurations $\{x, xy\}$, here with $x := bn$ and $y := \frac{b'}{b} p_1 \cdots p_k$, 

The main business of the proof is a kind of deformation of the patterns $x,xy$ to the desired $x+y,xy$. To describe how this works, fix an instance of \cref{ramsey-outcome} (that is, fix $b'$ and the sets $\mc{P}_i$ of primes). Set $f(n) := 1_A(bn)$. We will then consider two `projections' $\Pi^{\sml} f$ and $\Pi^{\lrg} f$, both of which average over progressions. They are defined by
\[ \Pi^{\sml} f(n) := \mb{E}_{h,h' \in [H]} f(n + q(h - h')) \qquad  \mbox{and} \qquad \Pi^{\lrg} f(n) := \mb{E}_{h,h' \in [\tilde H]} f(n + \tilde q(h - h')) \] where here $q  \mid \tilde q$ and $H > \tilde H$. (The actual choice of parameters depends on the scale of the sets of primes $\mc{P}_i$; the details are given at \cref{pi-pm}). One should think of $q, \tilde q$ as being bounded in terms of $r$, whereas the lengths $H, \tilde H$ grow with $N$. 

The small projection $\Pi^{\sml}$ is chosen so that we may run the following argument, starting from \cref{ramsey-outcome}. First, via a kind of maximal function argument, we replace \cref{ramsey-outcome} by

\begin{equation}\label{ramsey-outcome-2} \mb{E}_{n \in [N],p_{1}\in \mc{P}_{1},\dots, p_{k}\in \mc{P}_{k}}^{\log} \Pi^{\sml} f(n) 1_A(b' p_1 \cdots p_k n) \gg r^{-O(1)} .\end{equation} Details of this argument may be found in \cref{maximal}.

Then, we use the almost-periodicity property $\Pi^{\sml} f(n) \approx \Pi^{\sml} f(n + \frac{b'}{b^2} p_1 \cdots p_k)$ to replace \cref{ramsey-outcome-2} by
\begin{equation}\label{ramsey-outcome-3} \mb{E}_{n \in [N],p_{1}\in \mc{P}_{1},\dots, p_{k}\in \mc{P}_{k}}^{\log} \Pi^{\sml} f\big(n + \frac{b'}{b^2} p_1 \cdots p_k\big) 1_A(b' p_1 \cdots p_k n) \gg r^{-O(1)} .\end{equation} (note here that $b'/b^2$ is an integer by the highly divisible nature of the set $B_0$). In order for this almost-periodicity property to hold, the small projection $\Pi^{\sml}$ must be chosen appropriately: $q$ must divide $\frac{b'}{b^2} p_1 \cdots p_k$ and $H$ must be sufficiently long.

Leaving \cref{ramsey-outcome-3} aside for the moment, the technical heart of the proof is then an argument to the effect that (for an appropriate choice of the large projection $\Pi^{\lrg}$) we have
\begin{align} \nonumber \mb{E}_{n \in [N],p_{1}\in \mc{P}_{1},\dots, p_{k}\in \mc{P}_{k}}^{\log} & \Pi^{\lrg} f\big(n + \frac{b'}{b^2} p_1 \cdots p_k\big) 1_A(b' p_1 \cdots p_k n) \\ & \approx \mb{E}_{n \in [N],p_{1}\in \mc{P}_{1},\dots, p_{k}\in \mc{P}_{k}}^{\log} f\big(n + \frac{b'}{b^2} p_1 \cdots p_k\big) 1_A(b' p_1 \cdots p_kn). \label{from-inverse-proj}\end{align}
Supposing that this has been established, imagine that we additionally have 
\begin{equation} \label{small-large} \Pi^{\sml} f \approx \Pi^{\lrg} f \end{equation} (in an $\ell^2$ sense). Combining \cref{ramsey-outcome-3,from-inverse-proj,small-large} then gives, assuming the various uses of $\approx$ work in our favour, that 
\[ \mb{E}_{n \in [N],p_{1}\in \mc{P}_{1},\dots, p_{k}\in \mc{P}_{k}}^{\log} f\big(n + \frac{b'}{b^2} p_1 \cdots p_k\big) 1_A(b' p_1 \cdots p_k n) \gg r^{-O(1)} .\] Recalling that $f(n) = 1_A(bn)$, it then follows that for some choice of $n$ and $p_1,\dots, p_k$ we have $bn + \frac{b'}{b} p_1 \cdots p_k, b' p_1 \cdots p_k n \in A$. This is the desired configuration $\{ x + y, xy\}$, with $x = bn$ and $y = \frac{b'}{b} p_1 \cdots p_k $.

Whilst \cref{small-large} will not be true in general (the projections $\Pi^{\sml},\Pi^{\lrg}$ are quite different in scale), an `energy-chaining' or arithmetic regularity type of argument can be used to show that \cref{small-large} does hold for at least one scale of primes $\mc{P}_1,\dots, \mc{P}_k$. This part of the argument can be thought of as a quantitative version of the existence of projections in Hilbert space, specifically of the decomposition into locally aperiodic and locally quasiperiodic functions which is important in Richter's work. This connection between existence of projections in Hilbert space and regularity lemmas is by now well established; see e.g. \cite[Section~2]{Tao07}.

The remaining part of the argument is then to justify \cref{from-inverse-proj}. This is done via a general study of averages
\begin{equation}\label{mainf1f2avg} \mb{E}_{n \in [N],p_1 \in \mc{P}_1,\dots, p_k \in \mc{P}_k}^{\log} f_1(n + \lambda p_1 \cdots p_k) f_2(n p_1 \cdots p_k),\end{equation} where $\lambda = b'/b^2$ in our setting. Here, we consider arbitrary $1$-bounded functions $f_1, f_2$, and the key question of interest is the `inverse question' of what can be said if \cref{mainf1f2avg} is at least $\delta$ in magnitude for some $\delta > 0$. Our main result on this topic, \cref{main-sec3}, is an inverse theorem for this question. It concludes that under such a hypothesis (and with suitable assumptions on the sets $\mc{P}_i$ of primes) the function $f_1$ is biased along progressions to some modulus $\tilde q = \lambda \lfloor \delta^{-C}\rfloor!$ and length $H$ comparable (in logarithmic scale) to the largest of the primes $\mc{P}_i$. The statement \cref{from-inverse-proj} follows very quickly from this inverse theorem (see \cref{lem:proj-check} for the argument).

This inverse theorem, \cref{main-sec3}, is the most novel part of our paper. Whilst it is in a sense a quantitative, finitary version of \cite[Theorem 3.5]{Ric25}, it is not a direct translation of that result, which would appear to be far too weak for our purposes. The key difference when unwinding the argument in \cite{Ric25} in finitary language is that the latter finds bias along progressions with size depending on $\mc{P}_i$ while ours depends only on $\delta$. The proof of \cref{main-sec3} is lengthy, and involves a Fourier analytic argument combined with Cauchy--Schwarz man{\oe}uvres inspired by certain ``concatenation'' results in the additive combinatorics literature, for instance \cite{PP24,Pel20}. Ultimately it is these concatenation ideas which eliminate the dependence on $\mc{P}_i$.  Key further ingredients are:
\begin{itemize}
    \item Quantitative diophantine approximation results (\cref{vino-lemma});
    \item `Log-free' exponential sum estimates for certain arithmetic sets, specifically sets $\mc{P}' = \{ p_2 \cdots p_k : p_2 \in I_2,\dots, p_k \in I_k\}$ of `almost primes', as well as the sets of squares of the elements of such sets (\cref{sec3});
    \item Construction of a majorant for the primes with a certain Fourier decomposition (\cref{sec-fourier-decomp}), in order to avoid the constant $r_0$ in our main result being ineffective due to possible Siegel zeros.
\end{itemize}

\subsection{Acknowledgments}
BG is supported by Simons Investigator Award 376201. This research was conducted during the period MS served as a Clay Research Fellow. 

\subsection{Notation}\label{notation-sec}
At various points, for brevity it will be expedient to use the following notation. If $f : \Z \rightarrow \C$ is a function and if $h,h' \in \Z$, we write $\Delta_{(h,h')}f(x) := f(x + h) \overline{f (x + h')}$. If $\lambda$ is some further integer parameter, by $\Delta_{\lambda(h, h')} f$ we mean $\Delta_{(\lambda h, \lambda h')}f$.

By a \emph{dyadic interval} we mean any subset of $\N$ of the form $\{n : Y \le n < 2Y\}$. We will occasionally abuse notation by writing $[H]$ when we really mean $[\lfloor H \rfloor]$, for some $H \in \R_{\ge 1}$.

When we say that a parameter (for instance $\delta$) is `sufficiently small' we mean that $\delta \le \delta_0$ for some absolute $\delta_0$ which we do not explicitly specify, and analogously if we say that $N$ is `sufficiently large' we mean that $N \ge N_0$ for some absolute constant $N_0$. It is important to remark that $\delta_0, N_0$ are absolute and do not depend on the number of colours $r$ (otherwise our results would have little content). Throughout the paper the letter $N$ will always denote a sufficiently large integer parameter.

We write $(x,y)$ for the greatest common divisor of $x,y$ and $[x,y]$ for the lowest common multiple.

\section{Diophantine sets and averages}\label{dio-sec}

The purpose of this section is to bound certain averages that will appear in the arguments of the next section, where our key technical result is established. The averages in question will be of the form 
\[ \mb{E}_{n \in [N]}^{\log} \mb{E}_{s \in S, t,t' \le T} f(n + ts) \overline{f(n + t' s)} = \mb{E}_{n \in [N]}^{\log} \mb{E}_{s \in S, t,t'\in T} \Delta_{t(s,s')} f(n), \] where $S \subset \N$ is contained in some dyadic interval, or the analogous average with $\mb{E}_{n \in [N]}$ in place of the logarithmic average. The main result of the section is \cref{lem:input-concat} below.

In our applications the set $S$ will have a useful arithmetic property, namely that it satisfies a `log-free Weyl-type estimate'. The precise definition we will use is the following.

\begin{definition}\label{dioph-def}
Let $L, L', D$ be parameters. Let $S$ be a set of integers. Suppose that whenever $\delta \in (0, \frac{1}{2})$ and $| \mb{E}_{s \in S} e(\theta s)| \ge \delta$, then there is some natural number $q$, $q \le (L'/\delta)^{L}$, such that $\Vert q \theta \Vert_{\R/\Z} \le (L'/\delta)^L/D$. Then we say that $S$ is $(L, L', D)$-diophantine.
    \end{definition}
    \begin{remarks} Note that the definition is invariant under translation of $S$. In applications the parameter $D$ will be comparable to the diameter of $S$, but it is convenient not to simply set $D := \diam(S)$, since this would lead to unnecessary estimations of the diameter of $S$ in some situations. Being diophantine with $D \asymp \diam(S)$ (for some $L, L'$) is a common property of sets of integers. For instance, (the log-free variant of) Weyl's inequality asserts that the set of $j$th powers in $[D]$ is $(L, L',D)$-diophantine with appropriate parameters $L, L' \ll_j 1$; the set of $j$th powers of primes in $[D]$ is also $(L, L',D)$-diophantine for some $L, L' \ll_j 1$. In fact, we will use the latter fact in our argument; for the proof see \cref{log-free-weyl-2}. 
\end{remarks}

Before turning to the statement and proof of the main results, we isolate the following lemma, which is of a standard type in the analysis of exponential sums. A proof of this particular variant may be found in \cite[Lemma C.1]{Gre25} (we have changed some dummy variables to avoid conflicts with the present paper).
\begin{lemma}\label{vino-lemma}
 Suppose that $\alpha \in \R$ and that $T \ge 1$ is an integer. Suppose that $\delta_1, \delta_2$ are positive real numbers satisfying $\delta_2 \ge 32 \delta_1$, and suppose that there are at least $\delta_2 T$ elements $t \in [T]$ for which $\Vert \alpha t \Vert_{\R/\Z} \le \delta_1$. Suppose that $T \ge 16/\delta_2$. Then there is some positive integer $q \le 16/\delta_2$ such that $\Vert \alpha q \Vert_{\R/\Z} \le \delta_1\delta_2^{-1} T^{-1}$.
\end{lemma}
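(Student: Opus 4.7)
The plan is a two-stage Dirichlet / pigeonhole argument. Set $S := \{t \in [T] : \|\alpha t\|_{\R/\Z} \le \delta_1\}$, so by hypothesis $|S| \ge \delta_2 T$ and the map $t \mapsto \alpha t \bmod 1$ sends $S$ into the short arc $[-\delta_1, \delta_1] \subset \R/\Z$. The candidate $q$ in the conclusion will be produced as a difference of two elements of $S$, so $\|\alpha q\|_{\R/\Z}$ is automatically at most twice the distance between their $\alpha$-images on the circle.

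The first step I would take is localisation. Partition $[T]$ into consecutive intervals of length $Q := 16/\delta_2$; using $T \ge Q$, the number of such intervals is at most $T/Q + 1 \le 2T/Q = T\delta_2/8$, so pigeonhole produces an interval $I$ with $|S \cap I| \ge |S|/(T\delta_2/8) \ge 8$. Any difference $q := t - t'$ with $t, t' \in S \cap I$ then satisfies $q \le Q = 16/\delta_2$, which is how the size constraint in the conclusion is to be met.

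Next I would play two pigeonholes on the target circle against each other. A purely local pigeonhole on the $\ge 8$ points $\{\alpha t \bmod 1 : t \in S \cap I\}$, all lying in an arc of length $2\delta_1$, already produces some difference $q \le Q$ with the coarse bound $\|\alpha q\|_{\R/\Z} \lesssim \delta_1$. By contrast, a global Dirichlet pigeonhole on all $|S| \ge \delta_2 T$ images in $[-\delta_1,\delta_1]$ finds two within $\le 2\delta_1/(|S|-1) \lesssim \delta_1/(\delta_2 T)$, hence some $q^{\ast} \in S - S$ with $\|\alpha q^{\ast}\|_{\R/\Z}$ of the desired order but with $q^{\ast}$ possibly as large as $T$.

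The main obstacle is to marry these two bounds, i.e.\ to produce a single $q$ that is simultaneously $\le 16/\delta_2$ \emph{and} has $\|\alpha q\|_{\R/\Z} \le \delta_1/(\delta_2 T)$. The clean route, which I would pursue, is to combine the local coarse approximation $q_0 \le Q$ obtained above with the best-approximation / continued-fraction structure of $\alpha$: once a small-denominator approximation to $\alpha$ at scale $Q$ has been identified, the hypothesis that $S$ is as dense as $\delta_2 T$ along $[T]$ (together with $\delta_2 \ge 32\delta_1$) forces the quality of this approximation to sharpen all the way to $\delta_1/(\delta_2 T)$, since otherwise $S$ could not fill the short arc $[-\delta_1,\delta_1]$ so densely. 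This sharpening step is the crux and is exactly the content of \cite[Lemma C.1]{Gre25}.
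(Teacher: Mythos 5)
Your localisation step is fine: partitioning $[T]$ into blocks of length $16/\delta_2$ and pigeonholing does produce a block containing at least $8$ elements of $S$, hence a difference $q_0 \le 16/\delta_2$ with the coarse bound $\Vert \alpha q_0 \Vert_{\R/\Z} \le 2\delta_1$. But the proof stops exactly where the lemma begins to have content. The entire point of the statement is the upgrade from $\Vert \alpha q_0\Vert_{\R/\Z} \lesssim \delta_1$ to $\Vert \alpha q_0\Vert_{\R/\Z} \le \delta_1\delta_2^{-1}T^{-1}$, a gain of a factor of order $\delta_2 T \ge 16$, and for this step you offer only the assertion that the density of $S$ ``forces the quality of this approximation to sharpen,'' followed by a citation of \cite[Lemma C.1]{Gre25}. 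That citation is circular: \cite[Lemma C.1]{Gre25} \emph{is} the statement being proved (it is precisely the reference the paper gives in lieu of a proof). An appeal to ``best-approximation / continued-fraction structure'' is not an argument here, and as written nothing in your text rules out the possibility that $\Vert \alpha q_0\Vert_{\R/\Z}$ is genuinely of size $\delta_1$ rather than $\delta_1/(\delta_2 T)$.

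To close the gap you need a counting argument, not a pigeonhole. Write $\alpha q_0 = a + \beta$ with $a \in \Z$ and $|\beta| = \Vert \alpha q_0\Vert_{\R/\Z}$, and split $[T]$ into the $q_0$ residue classes modulo $q_0$. Along each class the phases $\alpha(t_0 + j q_0) \equiv \alpha t_0 + j\beta \pmod 1$ drift linearly in $j$, so the indices $j$ with $\Vert \alpha t_0 + j\beta\Vert_{\R/\Z} \le \delta_1$ lie in a union of at most $|\beta| T/q_0 + 1$ intervals, each of length at most $2\delta_1/|\beta| + 1$. Summing over the classes bounds $|S|$ by roughly $2\delta_1 T + |\beta| T + q_0 + 2q_0\delta_1/|\beta|$; if $|\beta|$ exceeded $\delta_1\delta_2^{-1}T^{-1}$ (up to the stated constants) this would contradict $|S| \ge \delta_2 T$, using $\delta_2 \ge 32\delta_1$ and $T \ge 16/\delta_2$ to control the error terms. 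This is the step your proposal is missing; without it (or an equivalent), the proof is incomplete.
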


We next give the definition of certain norms describing bias of functions along arithmetic progressions.

\begin{definition}\label{gp-local-def}
Let $f : \Z \rightarrow \C$ be a function. Let $q \in \N$ and $H \in \N$ be parameters. Set 
\begin{equation}\label{log-u1-def} \Vert f \Vert_{U^1_{\log}[N; q,H]}^2 := \mb{E}_{n \in [N]}^{\log} \big| \mb{E}_{h \in [H]} f(n + hq) \big|^2 = \mb{E}_{n \in [N]}^{\log} \mb{E}_{h,h' \in [H]}\Delta_{q(h,h')} f(n) \end{equation} and
\begin{equation}\label{non-log-u1-def} \Vert f \Vert_{U^1[N; q,H]}^2 := \mb{E}_{n \in [N]} \big| \mb{E}_{h \in [H]} f(n + hq) \big|^2 = \mb{E}_{n \in [N]} \mb{E}_{h,h' \in [H]}\Delta_{q(h,h')} f(n). \end{equation}
\end{definition}
The logarithmic norm \cref{log-u1-def} will play the more prominent role in our analysis, with the uniform norm \cref{non-log-u1-def} being relegated to a more modest technical role in \cref{lem:input-concat-2-iter}. We record that, roughly speaking, we have $\Vert f \Vert_{U^1_{\log}[N; q,H]} \lessapprox  \Vert f \Vert_{U^1_{\log}[N; \tilde q,\tilde H]}$ if $q \mid \tilde q$ and that $\tilde H \tilde q < H q$ (for a precise statement, see \cref{gp-compar}). In particular for fixed $q$ the information that $\Vert f \Vert_{U^1_{\log}[N; q,H]}$ is large becomes weaker as $H$ becomes smaller. We are now ready for the first main result of the section, which could potentially have other applications.

\begin{lemma}\label{lem:input-concat}
Let $\delta$ be a sufficiently small positive parameter and $L,L',D\ge 1$. Let $S \subset \Z$ be $(L,L', D)$-diophantine with $S \subset [-4D, 4D]$, and let $T \in \N$ be a parameter. Suppose that $D, T \ge (L'/\delta)^{8L}$ and that $\frac{\log TD}{\log N} \le (\delta/L')^{50L}$. Let $H$ be any positive integer with $H \le (\delta/L')^{50L} TD$. Let $f : \N \rightarrow \C$ be $1$-bounded and suppose that we have
\begin{equation}\label{lem26-assump} \mb{E}_{n \in [N]}^{\log}\mb{E}_{t,t' \in [T]}\mb{E}_{s\in S}  f(n + ts) \overline{f(n + t's)} \ge \delta.\end{equation}
Then there exists $q \in \N$, $q\le (L'/\delta)^{8L}$, such that $\Vert f \Vert_{U^1_{\log}[N; q,H]} \ge (\delta/L')^{25L}$. 
\end{lemma}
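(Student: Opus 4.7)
\emph{Plan.} I would proceed Fourier analytically: transform the hypothesis into a spectral statement, extract a common denominator $q$ for the Fourier spectrum of $f$ via the diophantine property of $S$ together with \cref{vino-lemma}, and then transform back to produce bias along progressions of common difference $q$.

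\emph{Fourier translation.} Set $g(n) := (n \log N)^{-1/2} f(n) \mathbbm{1}_{[2TD, N]}(n)$ (the cutoff at $2TD$ discards log-mass $O(\log(TD)/\log N) \le (\delta/L')^{50L}$, which is harmless), so that $\|g\|_2^2 \le 1 + o(1)$, and let $\mu(\theta) := |\hat g(\theta)|^2$, a sub-probability measure on $\R/\Z$. For $n \ge 2TD$ and $|k| \le 2TD$, replacing the log-weight $1/n$ by $(n(n+k))^{-1/2}$ and shifting the summation index by at most $TD$ contribute only $O(\log(TD)/\log N) + O(1/\log N) = o(\delta)$ total error under the assumption $\log(TD)/\log N \le (\delta/L')^{50L}$. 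Writing $D_M(\alpha) := \mathbb{E}_{m \in [M]} e(m\alpha)$, Parseval gives
$$\mathbb{E}_n^{\log} \mathbb{E}_{s,t,t'} f(n+ts)\overline{f(n+t's)} \approx \int_{\R/\Z} \mathbb{E}_{s \in S}|D_T(s\theta)|^2 \, d\mu(\theta), \qquad \|f\|_{U^1_{\log}[N;q,H]}^2 \approx \int |D_H(q\theta)|^2 \, d\mu(\theta).$$
Consequently \cref{lem26-assump} becomes $\int \mathbb{E}_s |D_T(s\theta)|^2 \, d\mu \ge \delta/2$, and Markov's inequality yields $\mu(\Theta) \ge \delta/4$ where $\Theta := \{\theta : \mathbb{E}_s |D_T(s\theta)|^2 \ge \delta/4\}$.

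\emph{Locating a common denominator.} Fix $\theta \in \Theta$. Expanding $|D_T|^2$ and swapping averages gives $\mathbb{E}_{t,t' \in [T]} |\mathbb{E}_{s \in S} e((t-t')s\theta)| \ge \delta/4$; discarding the $t = t'$ contribution (negligible since $T \ge (L'/\delta)^{8L}$) and pigeonholing on $r = t - t'$ produces $R(\theta) \subseteq [T-1]$ with $|R(\theta)| \gtrsim \delta T$ on which $|\mathbb{E}_{s \in S} e(rs\theta)| \ge \delta/100$. Applying the $(L,L',D)$-diophantine property of $S$ with $\alpha = r\theta$ yields, for each $r \in R(\theta)$, some $q'(r) \le (100L'/\delta)^L$ with $\|q'(r) r\theta\|_{\R/\Z} \le (100L'/\delta)^L/D$; pigeonholing over the value of $q'(r)$ locates a single $q_0 = q_0(\theta) \le (100L'/\delta)^L$ such that $\|q_0 r\theta\|_{\R/\Z} \le (100L'/\delta)^L/D$ for $\ge \delta_2 T$ values $r \in [T]$, with $\delta_2 \gtrsim (\delta/L')^{2L}$. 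Applying \cref{vino-lemma} to $\alpha := q_0 \theta$ with $\delta_1 := (100L'/\delta)^L/D$ and this $\delta_2$ --- the hypotheses $\delta_2 \ge 32\delta_1$ and $T \ge 16/\delta_2$ both reducing to the assumed lower bounds $D, T \ge (L'/\delta)^{8L}$ --- produces $q_1 \le 16/\delta_2 \le (L'/\delta)^{2L + O(1)}$ with $\|q_0 q_1 \theta\|_{\R/\Z} \le \delta_1/(\delta_2 T) \le (L'/\delta)^{3L + O(1)}/(DT)$. Setting $q^\ast(\theta) := q_0(\theta) q_1(\theta) \le (L'/\delta)^{4L}$, one final pigeonhole over $\theta$ selects a single $q \le (L'/\delta)^{4L}$ and a subset $\Theta^\ast \subseteq \Theta$ of $\mu$-mass $\gtrsim (\delta/L')^{5L}$ on which $\|q \theta\|_{\R/\Z} \le (L'/\delta)^{4L}/(DT)$.

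\emph{Conclusion and main obstacle.} For $\theta \in \Theta^\ast$, the bound $H \le (\delta/L')^{50L} TD$ gives $H \|q\theta\|_{\R/\Z} \le (L'/\delta)^{4L} \cdot (\delta/L')^{50L} = (\delta/L')^{46L} \ll 1$, hence $|D_H(q\theta)|^2 \ge 1/2$, whence
$$\|f\|_{U^1_{\log}[N;q,H]}^2 \ge \int_{\Theta^\ast} |D_H(q\theta)|^2 \, d\mu(\theta) - o(1) \ge \tfrac{1}{2} \mu(\Theta^\ast) \ge (\delta/L')^{50L},$$
and taking square roots finishes the proof with room to spare. The principal obstacle is exponent bookkeeping: the cumulative $(L'/\delta)^{cL}$ loss through the pigeonhole on $r$, the diophantine hypothesis, the pigeonhole on $q'(r)$, \cref{vino-lemma}, and the final pigeonhole on $\theta$ must be kept with $c$ well below $50$, so that the $(\delta/L')^{50L}$ budget afforded by the hypothesis on $H$ is not exhausted. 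A secondary concern is the quantitative control of the Fourier translation errors, which succeeds precisely because $\log(TD)/\log N$ is by hypothesis a tiny power of $\delta/L'$.
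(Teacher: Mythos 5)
Your argument is correct, and its arithmetic core is exactly the paper's: the diophantine property of $S$ applied to the phases $r\theta$, a pigeonhole in the resulting denominators, and then \cref{vino-lemma} to pass from many good $r$ to a single $q$ with $\Vert q\theta\Vert_{\R/\Z} \le (L'/\delta)^{O(L)}/TD$. The exponent bookkeeping you flag does close with ample room against the budgets $8L$, $25L$, $50L$. Where you genuinely differ is the Fourier set-up. The paper takes no global transform; it uses \cref{log-avg-shift} and Cauchy--Schwarz to introduce an auxiliary average over $h \in [TD]-[TD]$, then for each good $n$ works with the compactly supported window $g_n(m) = f(n+m)$, $|m| \le 16TD$, extracts a \emph{single} large Fourier coefficient $\theta_n$ of $g_n$ lying on the major arcs, pigeonholes over $n$ to fix the denominator, and reassembles using positivity. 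You instead build the logarithmic weight into one global function $g$ and work with the spectral measure $\mu = |\wh{g}|^2\,d\theta$, so that both the hypothesis and the conclusion become integrals of Fej\'er-type kernels against $\mu$; the same major-arc analysis then shows $\mu$ places mass $\gtrsim (\delta/L')^{O(L)}$ on $\{\theta : \Vert q\theta\Vert_{\R/\Z} \le (L'/\delta)^{O(L)}/TD\}$ for a fixed $q$, and the conclusion is immediate since $|D_H(q\theta)| \ge 1/2$ there. Your route skips the paper's single-large-coefficient step (which costs a factor comparable to the inverse measure of the major arcs, about $(L'/\delta)^{14L}$, subsequently squared by a further Cauchy--Schwarz), so you in fact obtain a quantitatively stronger lower bound on $\Vert f\Vert^2_{U^1_{\log}[N;q,H]}$ than the paper's $(\delta/L')^{44L}$. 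The price is that your two Parseval identities are only approximate and must be justified by the same shift estimates (\cref{log-avg-shift}) that the paper invokes; your accounting of those errors as $O(\log(TD)/\log N)$, negligible by hypothesis, is right, as is the implicit measurable selection of $q^*(\theta)$ (take the minimal admissible value).
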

\begin{remark}
Note here that $q$ may depend on $f$, but we are free to specify $H$ subject to the stated upper bound condition.
\end{remark}
\begin{proof}  Throughout the proof we assume that $\delta_0$ is sufficiently small without further comment. The proof is Fourier-analytic; closely related arguments have appeared as base cases for various `concatenation' results (see e.g. \cite[Lemma~5.3]{PP24} or \cite[Lemma~5.4]{Pel20}). By \cref{log-avg-shift} applied with $h = (t - t') s$ we have
\[\mb{E}_{n \in [N]}^{\log}f(n)\mb{E}_{t,t'\in [T]}\mb{E}_{s\in S}\overline{f(n + (t-t')s)} \ge \delta/2, \] which for brevity we write
\[\mb{E}_{n \in [N]}^{\log}f(n)\mb{E}_{u \in [T] - [T]}\mb{E}_{s\in S}\overline{f(n + u s)} \ge \delta/2, \] with the understanding that $[T] - [T]$ is considered with multiplicity.
By Cauchy--Schwarz this gives that 
\[\mb{E}_{n \in [N]}^{\log}\mb{E}_{u,u'\in [T]-[T]}\mb{E}_{s,s'\in S} f(n + us)\overline{f(n+u's')}\ge \delta^{2}/4.\]
By a further application of \cref{log-avg-shift}, followed by the triangle inequality, we have that 
\[\mb{E}_{n \in [N]}^{\log}\Big|\mb{E}_{h\in [TD]-[TD]}\mb{E}_{u,u'\in [T]-[T]}\mb{E}_{s,s'\in S} f(n +h+ us)\overline{f(n+h + u's')}\Big|\ge \delta^{2}/8.\] Denote
\[ \mathcal{N}_0 :=\big\{ n \in [N] : \big|\mb{E}_{h\in [TD]-[TD]}\mb{E}_{u,u'\in [T]-[T]}\mb{E}_{s,s'\in S} f(n +h+ us)\overline{f(n+h + u's')}\big|\ge  \delta^{2}/16\big\}.\] By a simple averaging argument we have \begin{equation}\label{2point1-star} \mb{E}^{\log}_{n \in [N]} 1_{\mathcal{N}_0} (n) \ge \delta^2/16.\end{equation} For the time being, let $n \in \mathcal{N}_0$ be fixed. Defining $g_n : \N \rightarrow \C$ by $g_n(m) = f(n + m)$ for $|m|\le 16TD$ and $0$ otherwise, we have from the definition of $\mathcal{N}_0$ that 
\[\big|\mb{E}_{h\in [TD]-[TD]}\mb{E}_{u,u'\in [T]-[T]}\mb{E}_{s,s'\in S} g_n(h+ us)\overline{g_n(h + u's')}\big|\ge \delta^2/16.\]
Note here that $|h + us|, |h + u' s'| \le 16TD$, using here that $S \subset [-4D, 4D]$.
Taking the Fourier expansion $g_n(m) = \int_{\R/\Z} \wh{g_n}(\theta) e(\theta m) d\theta$ and applying the triangle inequality, this gives
\begin{equation}\label{basic-lower}\int_{(\R/\Z)^2} \big|\wh{g}_n(\theta)\wh{g}_n(\theta')\big|  K(\theta, \theta') d\theta d\theta' \ge \delta^{2}/16,\end{equation}
where
\[ K(\theta, \theta') := \big|\mb{E}_{h\in [TD]-[TD]} e\big((\theta-\theta') h\big) \psi(\theta)\psi(\theta')\big|  \] with \begin{equation}\label{eq2point1c} \psi(\theta) :=  \mb{E}_{u\in [T]-[T]}\mb{E}_{s \in S} e (\theta us).\end{equation}
Now by bounding the $\psi(\cdot )$ terms trivially by $1$ and using that \[ |\mb{E}_{h\in [TD]-[TD]}e((\theta - \theta') h)| = |\mb{E}_{h\in [TD]}e((\theta - \theta') h)|^2 \ll (TD)^{-2}\snorm{\theta - \theta'}_{\R/\Z}^{-2},\] we have $K(\theta, \theta') \ll \min (1,(TD)^{-2}\snorm{\theta - \theta'}_{\R/\Z}^{-2} )$.
From this, Cauchy--Schwarz and Parseval it follows that 
\[ \int_{\R/\Z} \big|\wh{g_n}(\theta) \wh{g_n}(\theta + \alpha)\big| K(\theta, \theta + \alpha) d\theta \ll \Big(\int_{\R/\Z} |\wh{g}_n(\theta)|^2 \Big) (TD)^{-2} \Vert \alpha \Vert_{\R/\Z}^{-2} \ll (TD)^{-1}  \Vert \alpha \Vert_{\R/\Z}^{-2}.\] Integrating over $\alpha \in \R/\Z$, we see that the contribution to \cref{basic-lower} from $\Vert \alpha \Vert_{\R/\Z} \ge C \delta^{-2}/TD$ is negligible for $C$ sufficiently large, that is to say
\[ \int_{\Vert \theta - \theta' \Vert_{\R/\Z} \le C\delta^{-2}/TD} \big|\wh{g}_n(\theta)\wh{g}_n(\theta')\big| K(\theta, \theta')  d\theta d\theta' \ge \delta^{2}/32.\] Therefore, bounding the geometric series part of $K$ trivially by $1$,
\[ \int_{\Vert \theta - \theta' \Vert_{\R/\Z} \le C\delta^{-2}/TD} \big|\wh{g}_n(\theta)\wh{g}_n(\theta')\psi(\theta)\psi(\theta')\big| d\theta d\theta' \ge \delta^{2}/32.\] In particular, for some $\alpha \in \R/\Z$ we have
\[ \int_{\R/\Z} \big|\wh{g}_n(\theta) \wh{g}_n(\theta + \alpha)\psi(\theta)\psi(\theta+ \alpha)\big| d\theta  \gg \delta^{4}TD.\]

Using the AM--GM inequality $x^2 + y^2 \ge 2xy$ with $x = |\wh{g_n}(\theta) \psi(\theta)|$ and $y = |\wh{g_n}(\theta+\alpha)\psi(\theta+\alpha)|$, it follows that 
\begin{equation}\label{g-s-bd} \int_{\R/\Z} |\wh{g}_n(\theta)|^2 |\psi(\theta)|^2 d\theta  \gg \delta^{4}TD.\end{equation}
By Parseval's inequality we have $\int_{\R/\Z} |\wh{g}_n(\theta)|^2 \ll TD$, and so for sufficiently small $c_1$ we have
\begin{equation}\label{g-s-bd-2} \int_{|\psi(\theta)| \ge c_1 \delta^2} |\wh{g}_n(\theta)|^2 |\psi(\theta)|^2 d\theta  \gg \delta^{4}TD.\end{equation}

To proceed further we need to analyse the $\theta$ for which $|\psi(\theta)| \ge c_1 \delta^2$. Suppose in the following discussion that $\theta$ has this property. Recalling that the definition of $\psi$ is \cref{eq2point1c}, it follows that $\mb{E}_{u\in [T]-[T]}\big|\mb{E}_{s\in S}e( \theta u s)\big|\ge c_1\delta^{2}$. Writing $\mathcal{U} := \{ u \in [T] : \big|\mb{E}_{s\in S}e( \theta u s)\big|\ge c_1\delta^{2}/2\}$, we see that $\mu_{[T] - [T]}(\mathcal{U} \cup - \mathcal{U}) \gg \delta^2$, where $\mu_{[T] - [T]}$ denotes the natural weighted probability measure on $[T] - [T]$. Since $\mu_{[T] - [T]}(x) \le 1/T$ pointwise, it follows that $|\mathcal{U}| \gg \delta^2 T$. 

We now apply the diophantine assumption on $S$. We conclude that for each $u \in \mathcal{U}$ there is some nonzero $q_u \ll (L'/\delta)^{2L}$ such that $\Vert q_u u \theta \Vert_{\R/\Z} \ll (L'/\delta)^{2L}/D$. By further refining the set of $u$ (to a set of size $\gg (\delta/L')^{4L}T$) we may assume that $q_u$ does not depend on $u$. Denote this common value by $q_0$.

Now we apply \cref{vino-lemma}, taking $\alpha = q_0 \theta$, $\delta_2 \gg (\delta/L')^{4L}$ and $\delta_1 = (L'/\delta)^{2L}/D$. One can check that the conditions of \cref{vino-lemma} are consequences of the hypothesised lower bounds on $D$ and $T$, provided $C$ is large enough. The conclusion of the lemma is then that there is some $q \ll (L'/\delta)^{4L}$ such that $\Vert \alpha q\Vert_{\R/\Z} \ll (L'/\delta)^{6L}/TD$. Taking $q' := q q_0$, we see that $q' \ll (L'/\delta)^{6L}$ and $\Vert \theta q'\Vert_{\R/\Z} \ll (L'/\delta)^{8L}/TD$.

It follows from this analysis and \cref{g-s-bd} that $\int_{\theta \in \Theta } |\wh{g}_n(\theta)|^2 \gg \delta^4 TD$, where $\Theta$ is the set of all $\theta$ for which $\Vert \theta q\Vert_{\R/\Z} \le (L'/\delta)^{8L}/TD$ for some $q \in \N$ with $q \ll (L'/\delta)^{6L}$. Since the measure of $\Theta$ is $\ll (L'/\delta)^{14L}/TD$, there is some $\theta_n \in \Theta$ such that \begin{equation}\label{2.5b} |\wh{g}_n(\theta_n)| \gg (\delta/L')^{18L} TD.\end{equation} By refining $\mathcal{N}_0$ we may, using \cref{2point1-star}, find $\mathcal{N}_1 \subset \mathcal{N}_0$ such that 
\begin{equation}\label{n1-lower} \mb{E}_{n \in [N]}^{\log} 1_{\mathcal{N}_1}(n) \gg (\delta/L')^{8L}\end{equation} and such that, for all $n \in \mathcal{N}_1$, the corresponding $\theta_n$ all have the same value of $q$; that is, $\Vert q \theta_n\Vert_{\R/\Z} \ll (L'/\delta)^{8L}/TD$ for all $n \in \mathcal{N}_1$. Writing out the definition of the Fourier transform, we have from \cref{2.5b} that
\[ \Big|\mb{E}_{|m| \le 16TD} g_n(m) e(\theta_n m)\Big| \gg (\delta/L')^{18L}.\]
Recall that $H \in \N$ is a given parameter, satisfying $H \le (L'/\delta)^{50L}$. By the properties of $\theta_n$, we have
\[ \Big|\mb{E}_{h\in [H]}\mb{E}_{|m| \le 16TD} g_n(m) e(\theta_n (m - qh))\Big| \gg (\delta/L')^{18L}.\]
Substituting $m' := m - qh$ gives
\[ \Big|\mb{E}_{h\in [H]}\mb{E}_{-16TD + qh \le m' \le 16TD + qh} g_n(m' + qh) e(\theta_n m')\Big| \gg (\delta/L')^{18L},\] which implies that 
\[ \Big|\mb{E}_{h\in [H]}\mb{E}_{|m'| \le 16TD} g_n(m' + qh) e(\theta_n m')\Big| \gg (\delta/L')^{18L}\] by the bound on $H$ and \cref{ord-avg-shift}. Dropping the dashes on $m'$ and swapping the order of the averages gives
\[ \Big|\mb{E}_{|m| \le 16TD} e(\theta_n m)\mb{E}_{h\in [H]}g_n(m + qh) \Big| \gg (\delta/L')^{18L}.\] 
By Cauchy--Schwarz, it follows that 
\[ \mb{E}_{|m| \le 16TD} \mb{E}_{h, h' \in [H]} g_n(m + qh) \overline{g_n(m + qh')} \gg(\delta/L')^{36L}.\] Recall that we have this for all $n \in \mathcal{N}_1$. However, the quantity on the left is non-negative for all $n$. Taking the logarithmic average over $n$ (and recalling \cref{n1-lower}) we obtain
\[ \mb{E}_{n \in [N]}^{\log} \mb{E}_{|m| \le 16TD} \mb{E}_{h, h' \in [H]} g_n(m + qh) \overline{g_n(m + qh')} \gg(\delta/L')^{44L}.\]
Recalling that $g_n(m) = f(n+m)$, and taking the $n$ average to the inside, this is
\[ \mb{E}_{|m| \le 16TD} \mb{E}_{h, h' \in [H]} \mb{E}_{n \in [N]}^{\log}  f(n + m + qh) \overline{f(n + m + qh')} \gg(\delta/L')^{44L}.\]
Applying \cref{log-avg-shift} to the inner average for each $m$ (and using the assumed bound on $\frac{\log TD}{\log N}$) we may drop the $m$-average, obtaining
\[  \mb{E}_{h, h' \in [H]} \mb{E}_{n \in [N]}^{\log}  f(n + qh) \overline{f(n  + qh')} \gg(\delta/L')^{44L}.\]
This is equivalent to the stated result.
\end{proof}

\begin{lemma}\label{lem:input-concat-2}
There is an absolute constant $\delta_0$ such that the following holds. Fix $\delta \in (0,\delta_0]$ and $L,L',D\ge 1$. Let $S \subset [-4D, 4D]$ be a set which is $(L,L',D)$-diophantine, and let $T \in \N$ be a parameter. Let $X$ be a further sufficiently large parameter. Suppose that $D, T \ge (L'/\delta)^{8L}$ and that $TD \le (\delta/L')^{50L} X$.  Let $H$ be a positive integer with $H \le (\delta/L')^{50L} TD$. Let $f : \N \rightarrow \C$ be $1$-bounded and suppose that we have
\begin{equation}\label{lem25-assump} \mb{E}_{n \in [X]}\mb{E}_{t,t' \in [T]}\mb{E}_{s\in S}  f(n + ts) \overline{f(n + t's)} \ge \delta.\end{equation}
Then there exists $q \in \N$, $q\le (L'/\delta)^{8L}$, such that $\Vert f \Vert_{U^1[X; q,H]} \ge (\delta/L')^{25L}$. 
\end{lemma}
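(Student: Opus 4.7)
The plan is to repeat the proof of \cref{lem:input-concat} essentially verbatim, replacing logarithmic averages $\mb{E}^{\log}_{n \in [N]}$ by uniform averages $\mb{E}_{n \in [X]}$ throughout. The Fourier-analytic heart of the argument is entirely local (it works with the localized functions $g_n(m) = f(n+m)\mathbf{1}_{|m| \le 16TD}$) and is therefore indifferent to which average we use on the outer variable $n$. The only places where log vs.\ uniform matters are the three applications of a shift lemma: in the log setting these invoke \cref{log-avg-shift,ord-avg-shift} and cost an error of size $O(\log TD/\log N)$, whereas in the uniform setting the analogous shifts over $[X]$ by an amount at most $16TD$ cost an error of size $O(TD/X)$. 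The hypothesis $TD \le (\delta/L')^{50L} X$ in \cref{lem:input-concat-2} is precisely the uniform analogue of the hypothesis $\log TD/\log N \le (\delta/L')^{50L}$ in \cref{lem:input-concat}, and guarantees all such errors are negligible.

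\textbf{First half.} Applying the uniform shift with $h = (t-t')s$ (of size at most $8TD$) converts the hypothesis \cref{lem25-assump} into
\[\mb{E}_{n \in [X]} f(n) \mb{E}_{u \in [T] - [T]} \mb{E}_{s \in S} \overline{f(n + u s)} \ge \delta/2.\]
Cauchy--Schwarz and a further uniform shift by $h \in [TD] - [TD]$ then yield
\[\mb{E}_{n \in [X]} \big|\mb{E}_{h \in [TD]-[TD]} \mb{E}_{u, u' \in [T]-[T]} \mb{E}_{s, s' \in S} f(n + h + us) \overline{f(n + h + u's')}\big| \ge \delta^2/8.\]
Define the analogue of $\mathcal{N}_0 \subset [X]$ as the set of $n$ for which the inner quantity exceeds $\delta^2/16$; a simple averaging gives $|\mathcal{N}_0| \ge (\delta^2/16) X$.

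\textbf{Second half.} For each $n \in \mathcal{N}_0$, define $g_n(m) := f(n+m) \mathbf{1}_{|m| \le 16TD}$ and run the Fourier-analytic argument of \cref{lem:input-concat} verbatim: this produces $\theta_n \in \R/\Z$ and $q(n) \in \N$ with $q(n) \ll (L'/\delta)^{6L}$, $\snorm{q(n) \theta_n}_{\R/\Z} \ll (L'/\delta)^{8L}/TD$, and $|\wh{g}_n(\theta_n)| \gg (\delta/L')^{18L} TD$. Pigeonholing on the value of $q(n)$ passes to $\mathcal{N}_1 \subset \mathcal{N}_0$ with $|\mathcal{N}_1|/X \gg (\delta/L')^{8L}$ and a common modulus $q \ll (L'/\delta)^{8L}$. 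Unwinding the Fourier bound and inserting the shift by $qh$ (using the uniform analogue of \cref{ord-avg-shift}, which is valid by the bound on $H$), followed by Cauchy--Schwarz, gives for each $n \in \mathcal{N}_1$
\[\mb{E}_{|m| \le 16TD} \mb{E}_{h, h' \in [H]} g_n(m + qh) \overline{g_n(m + qh')} \gg (\delta/L')^{36L}.\]
Taking the uniform average over $n \in [X]$ (using $|\mathcal{N}_1|/X \gg (\delta/L')^{8L}$ and non-negativity on the complement), writing out $g_n$, swapping averages, and dropping the outer $m$-average via a final uniform shift (valid since $16TD \le (\delta/L')^{50L} X$) yields
\[\mb{E}_{n \in [X]} \mb{E}_{h, h' \in [H]} f(n + qh) \overline{f(n + qh')} \gg (\delta/L')^{44L},\]
which is $\snorm{f}^2_{U^1[X; q, H]} \gg (\delta/L')^{44L} \ge (\delta/L')^{50L}$, giving the claim.

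\textbf{Main obstacle.} There is no real conceptual obstacle; the only thing to verify carefully is that each of the three uniform shifts introduces an error negligible compared to the relevant quantity. Each shift is by an amount at most $16TD$, and the hypothesis $TD \le (\delta/L')^{50L} X$ is designed exactly so that the resulting $O(TD/X)$ error is absorbed. The Fourier step itself, being local to $|m| \le 16TD$, transfers without modification.
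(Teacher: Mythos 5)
Your proposal is correct and is essentially identical to the paper's own proof, which simply states that the argument of \cref{lem:input-concat} goes through verbatim with the three applications of \cref{log-avg-shift} replaced by \cref{ord-avg-shift}, the errors $O(TD/X)$ being absorbed by the hypothesis $TD \le (\delta/L')^{50L} X$. Your observation that the Fourier-analytic core is local to $|m| \le 16TD$ and hence insensitive to the choice of outer average is exactly the point.
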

\begin{proof}
The same proof works essentially verbatim, except that the three applications of \cref{log-avg-shift} are replaced by appeals to \cref{ord-avg-shift}, using each time the assumption $\frac{TD}{X} \le (\delta/L')^{50L}$ rather than a bound on $\frac{\log TD}{\log X}$ in the logarithmic case. 
\end{proof}

Rather than \cref{lem:input-concat-2} itself, we will need the following iterated variant. Here we use the notation for difference operators $\Delta_{(h,h')}$ described in \cref{notation-sec}.

\begin{lemma}\label{lem:input-concat-2-iter} There are absolute constants $\delta_0 < 1$ and $C = C_{\operatorname{\ref{lem:input-concat-2-iter}}}$ such that the following holds.
Fix $\delta \in (0,\delta_0]$ and $L,L',D_1, D_2\ge 1$. For $i = 1,2$ suppose that $S_i \subset [-4D_i, 4D_i]$ is a set which is $(L, L',D_i)$-diophantine, and let $T_i$ be a parameter. Let $X$ be a sufficiently large parameter and suppose that $D_i, T_i \ge (L'/\delta)^{CL^2}$ and that $T_i D_i \le (L'/\delta)^{CL^2}X$. Let $H_1, H_2$ be positive integers with $H_i \le (L'/\delta)^{CL^2} T_i D_i$.  Let $\psi : \N \rightarrow \C$ be $1$-bounded and suppose that 
\begin{equation}\label{lem26-assump-2} \mb{E}_{n \in [X, 2X),t_1,t_1' \in [T_1], t_2, t'_2 \in [T_2],s_1\in S_1,s_2 \in S_2}  \Delta_{s_1(t_1, t'_1)}\Delta_{s_2(t_2, t'_2)}\psi(n)\ge \delta.\end{equation}
Then there exist $q_1,q_2 \in \N$, $q_i\le (L'/\delta)^{CL^2}$, such that 
\[ \mb{E}_{n \in [2X], h_1, h'_1 \in [H_1], h_2, h'_2 \in [H_2]} \Delta_{q_1(h_1, h'_1)} \Delta_{q_2(h_2, h'_2)} \psi(n) \ge (\delta/L')^{CL^2}.\]
\end{lemma}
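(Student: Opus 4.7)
The plan is to apply \cref{lem:input-concat-2} twice in succession, first to the block of variables $(t_2,t_2',s_2)$ and then to the block $(t_1,t_1',s_1)$, interposing a pigeonhole step to freeze the modulus $q$ produced by the lemma so that the two stages can be composed. The recurring structural device is the pointwise identity
\[
\mb{E}_{h,h'\in [H]} \Delta_{q(h,h')} f(n) = \bigl|\mb{E}_{h\in [H]} f(n+qh)\bigr|^{2} \ge 0
\]
(valid for any $f$), which lets us promote an inequality on a dense subset of a parameter space to an inequality on the full space, losing only the density factor.

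For the first stage, set $F_{t_1,t_1',s_1}(n) := \Delta_{s_1(t_1,t_1')}\psi(n)$, which is $1$-bounded in $n$. Rewriting \cref{lem26-assump-2} as an iterated expectation, the inner quantity
\[
\Phi(t_1,t_1',s_1) := \mb{E}_{n\in [X,2X),\, t_2,t_2'\in [T_2],\, s_2\in S_2}\Delta_{s_2(t_2,t_2')} F_{t_1,t_1',s_1}(n)
\]
is pointwise non-negative in $(t_1,t_1',s_1)$. Markov's inequality isolates $\mc{T}\subset [T_1]^2\times S_1$ of density $\ge\delta/2$ on which $\Phi\ge\delta/2$; passing from $[X,2X)$ to $[2X]$ costs at most a further factor of $2$ by the same non-negativity. \cref{lem:input-concat-2}, applied with $(X,T,S,D)\to(2X,T_2,S_2,D_2)$ and threshold $\delta/4$, produces for each $(t_1,t_1',s_1)\in\mc{T}$ an integer $q_2=q_2(t_1,t_1',s_1)\le (L'/\delta)^{O(L)}$ with $\mb{E}_{n\in [2X],\, h_2,h_2'\in [H_2]}\Delta_{q_2(h_2,h_2')} F_{t_1,t_1',s_1}(n) \ge (\delta/L')^{O(L)}$. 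Pigeonholing on the $(L'/\delta)^{O(L)}$ possible values of $q_2$ fixes a common $q_2^{*}\in\N$ on $\mc{T}'\subset\mc{T}$ of density $\ge(\delta/L')^{O(L)}$. Using $\Delta_{q_2^{*}(h_2,h_2')} F_{t_1,t_1',s_1}(n) = \Delta_{s_1(t_1,t_1')}\Delta_{q_2^{*}(h_2,h_2')}\psi(n)$ and the $(h_2,h_2')$-average identity
\[
\mb{E}_{h_2,h_2'}\Delta_{s_1(t_1,t_1')}\Delta_{q_2^{*}(h_2,h_2')}\psi(n) = \bigl|\mb{E}_{h_2}\psi(n+q_2^{*}h_2+s_1t_1)\overline{\psi(n+q_2^{*}h_2+s_1t_1')}\bigr|^{2}\ge 0,
\]
the integrand is non-negative in $(t_1,t_1',s_1)$, so extending the $\mc{T}'$-average to all of $[T_1]^2\times S_1$ yields $\mb{E}_{n,\, t_1,t_1',\, s_1,\, h_2,h_2'}\Delta_{s_1(t_1,t_1')}\Delta_{q_2^{*}(h_2,h_2')}\psi(n) \ge (\delta/L')^{O(L)}$.

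The second stage is symmetric. Put $G_{h_2,h_2'}(n) := \Delta_{q_2^{*}(h_2,h_2')}\psi(n)$, still $1$-bounded. By Markov there is $\mc{H}\subset [H_2]^2$ of density $\ge(\delta/L')^{O(L)}$ on which $\mb{E}_{n,t_1,t_1',s_1}\Delta_{s_1(t_1,t_1')} G_{h_2,h_2'}(n) \ge (\delta/L')^{O(L)}$, and a second invocation of \cref{lem:input-concat-2}, now with $(T,S,D)\to(T_1,S_1,D_1)$ and threshold $(\delta/L')^{O(L)}$, supplies $q_1(h_2,h_2')\le (L'/\delta)^{O(L^2)}$ with $\Vert G_{h_2,h_2'}\Vert_{U^1[2X;q_1,H_1]}^{2}\ge (\delta/L')^{O(L^2)}$. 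A final pigeonhole fixes a common $q_1^{*}$ on $\mc{H}'\subset\mc{H}$ of density $\ge(\delta/L')^{O(L^2)}$; because the integrand $\Vert G_{h_2,h_2'}\Vert_{U^1[2X;q_1^{*},H_1]}^{2}$ is a squared norm, hence non-negative for every $(h_2,h_2')$, extending from $\mc{H}'$ back to $[H_2]^2$ and unwrapping $G_{h_2,h_2'}$ delivers the target
\[
\mb{E}_{n\in[2X],\, h_1,h_1'\in [H_1],\, h_2,h_2'\in [H_2]} \Delta_{q_1^{*}(h_1,h_1')}\Delta_{q_2^{*}(h_2,h_2')}\psi(n) \ge (\delta/L')^{O(L^2)}.
\]

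The main technical obstacle — and the source of the exponent $CL^2$ in the hypotheses of the iterated statement — is verifying that the parameter conditions of \cref{lem:input-concat-2} survive the second invocation, where the effective threshold has degraded from $\delta$ to $\delta_1=(\delta/L')^{O(L)}$. In particular the requirement $D_1,T_1\ge (L'/\delta_1)^{O(L)} = (L'/\delta)^{O(L^2)}$ must be covered by the hypothesis $D_i,T_i\ge (L'/\delta)^{CL^2}$, and similarly for the constraints on $T_iD_i$ and $H_i$; choosing the absolute constant $C$ sufficiently large relative to the implied constants in the two applications of \cref{lem:input-concat-2} makes every inequality balance, and no genuinely new idea beyond the non-negativity observation and two-step pigeonholing is required.
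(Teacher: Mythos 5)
Your proposal is correct and follows essentially the same route as the paper's proof: two successive applications of \cref{lem:input-concat-2}, each preceded by a Markov/averaging step and followed by pigeonholing on the modulus, with the pointwise non-negativity of the inner squared averages used to pass from the dense subset back to the full parameter space. The only (immaterial) difference is that you process the $(t_2,t_2',s_2)$-block first and the $(t_1,t_1',s_1)$-block second, while the paper does the reverse; both handle the bookkeeping of the degraded threshold in the second application in the same way, by taking $C$ large.
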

\begin{remark} 
We have only stated a version with two difference operators (which will involve one iteration of \cref{lem:input-concat-2}), since this is what we will need later. A similar argument gives a version with $k$ difference operators.
\end{remark}
\begin{proof}
By an averaging argument, there are at least $\delta |S_2| T_2^2/2$ triples $(s_2, t_2, t'_2)$ such that 
\[ \mb{E}_{n \in [X,2X), t_1, t'_1 \in [T_1], s_1 \in S_1} \Delta_{s_1(t_1, t'_1)} \big(\Delta_{s_2 (t_2, t'_2)} \psi \big)(n) \ge \delta/2.\] 
Since, for any $n, s_1$, the average over $t_1, t'_1$ is non-negative, we have
\[ \mb{E}_{n \in [2X], t_1, t'_1 \in [T_1], s_1 \in S_1} \Delta_{s_1(t_1, t'_1)} \big(\Delta_{s_2 (t_2, t'_2)} \psi \big)(n) \ge \delta/4.\] 
For each such triple, this is exactly the hypothesis \cref{lem25-assump} of \cref{lem:input-concat-2} with $f = \Delta_{s_2(t_2, t'_2)}\psi$ (and $\delta$ replaced by $\delta/4$ and $X$ by $2X$). The conclusion of \cref{lem:input-concat-2} is then that there exists $q = q(s_2, t_2, t'_2) \le (L'/\delta)^{O(L)}$ such that $\Vert \Delta_{s_2(t_2, t'_2)}\psi \Vert_{U^1[2X; q, H_1]} \ge (\delta/L')^{O(L)}$. Squaring and writing out, this gives
\begin{equation}\label{eq322}  \mb{E}_{n \in [2X]} \mb{E}_{h_1,h'_1 \in [H_1]}\Delta_{q(h_1,h'_1)} \big(\Delta_{s_2(t_2, t'_2)} \psi\big) (n) \ge (\delta/L')^{O(L)}. \end{equation} By pigeonhole, we may pass to set of $(\delta/L')^{O(L)}|S_2| T_2^2$ triples $(s_2, t_2, t'_2)$ such that $q_1 = q(s_2, t_2, t'_2)$ is independent of $s_2, t_2, t'_2$.
Since the expression on the left in \cref{eq322} is always nonnegative, we may average over \emph{all} $(s_2, t_2, t'_2) \in S_2 \times [T_2] \times [T_2]$, obtaining
\[\mb{E}_{h_1, h'_1 \in [H_1]} \mb{E}_{n \in [2X], t_2, t'_2 \in T_2, s_2 \in S_2} \Delta_{s_2(t_2, t'_2)} \big(\Delta_{q_1(h_1,h'_1)} \psi\big)(n) \ge (\delta/L')^{O(L)}.\]
For at least $(\delta/L')^{O(L)} H_1^2$ pairs $(h_1, h'_1)$, we have 
\[ \mb{E}_{n \in [2X], t_2, t'_2 \in T_2, s_2 \in S_2} \Delta_{s_2(t_2, t'_2)} \big(\Delta_{q_1(h_1,h'_1)} \psi\big)(n) \ge (\delta/L')^{O(L)}.\]
For each such pair, this is again the hypothesis \cref{lem25-assump} of \cref{lem:input-concat-2}, now with $f = \Delta_{q_1(h_1, h'_1)}\psi$, $\delta$ replaced by $(\delta/L')^{O(L)}$, and again with $N = 2X$. Another application of \cref{lem:input-concat-2} gives that there exists $q = q(h_1, h'_1) \le (L'/\delta)^{O(L^2)}$ such that $\Vert \Delta_{q_1(h_1, h'_1)}\psi\Vert_{U^1[2X; q,H_2]} \ge (\delta/L')^{O(L^2)}$, provided that $C$ is sufficiently large that the relevant conditions on $D_2, T_2$ and $T_2D_2/X$ are satisfied. Squaring and writing out, this gives
\begin{equation}\label{eq323} \mb{E}_{n \in [2X]} \mb{E}_{h_2, h'_2 \in [H_2]} \Delta_{q(h_2, h'_2)} \Delta_{q_1(h_1, h'_1)} \psi(n) \ge (\delta/L')^{O(L^2)}.\end{equation} Passing to a further subset of $(\delta/L')^{O(L^2)}$ pairs $(h_1, h'_1)$, we may assume that $q_2 = q(h_1, h'_1)$ does not depend on $(h_1, h'_1)$. Since the expression on the left in \cref{eq323} is non-negative for all $q$, we obtain the desired result by averaging over $h_1, h'_1$.
\end{proof}

\section{Diophantine properties of almost primes}\label{sec3}

The main result of this section, \cref{lem:crit-estimate}, is a vital technical ingredient in our later arguments. Roughly, it states that sets such as $\{ p_1 \cdots p_k : p_i \in \mc{P}_i\}$ and $\{ p^2_1 \cdots p^2_k : p_i \in \mc{P}_i\}$ are diophantine (see \cref{dioph-def}) with suitable parameters, where $\mc{P}_i$ are dyadically localised sets of primes.  We first note a general lemma for `bilinear' exponential sums. 

\begin{lemma}\label{bilinear-standard}
Let $j \in \N$. Let $\delta \in (0,\frac{1}{2})$, and let $S_1 \subset [N_1]$ and $S_2 \subset [N_2]$ be sets with $|S_i| = \sigma_i N_i$ for $i = 1,2$.  Suppose that, for some $\theta \in \R/\Z$, we have $|\mb{E}_{s_1 \in S_1, s_2 \in S_2} e(\theta s^j_1 s^j_2)| \ge \delta$. Then either $N_i \le (\sigma_1 \sigma_2 \delta)^{-O_j(1)}$ for some $i \in \{1,2\}$, or else there is some $q \in \N$, $q \le (\delta \sigma_1\sigma_2)^{-O_j(1)}$, such that $\Vert q \theta\Vert_{\R/\Z} \le (\delta \sigma_1\sigma_2)^{-O_j(1)}(N_1 N_2)^{-j}$.
\end{lemma}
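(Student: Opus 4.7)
The plan is to reduce the bilinear sum to one-variable $j$-th power exponential sums via Cauchy--Schwarz, apply the log-free Weyl inequality for $j$-th powers (of the kind mentioned in the remark after \cref{dioph-def}) twice, and combine to extract the denominator $q$.

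First I would square the hypothesis and apply Cauchy--Schwarz in $s_1$ to obtain
\[
\sum_{s_2, s_2' \in S_2} \sum_{s_1 \in [N_1]} e\bigl(\theta s_1^j (s_2^j - s_2'^j)\bigr) \ge \delta^2 \sigma_1 \sigma_2^2 N_1 N_2^2.
\]
The diagonal $s_2 = s_2'$ contributes $\sigma_2 N_1 N_2$; if this dominates the right side, one obtains $N_2 \le (\delta \sigma_1 \sigma_2)^{-O_j(1)}$ and we are in the first conclusion. Otherwise, a routine averaging produces at least $(\delta \sigma_1 \sigma_2)^{O_j(1)} N_2^2$ off-diagonal pairs with $M := s_2^j - s_2'^j \ne 0$ satisfying $|\sum_{s_1 \in [N_1]} e(\theta M s_1^j)| \ge (\delta \sigma_1)^{O_j(1)} N_1$. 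For each such pair, log-free Weyl for $j$-th powers yields a denominator $q_M \le (\delta \sigma_1)^{-O_j(1)}$ with $\Vert \theta q_M M \Vert_{\R/\Z} \le (\delta \sigma_1)^{-O_j(1)} N_1^{-j}$. Pigeonholing over the bounded number of possible $q_M$, I obtain a single $q^* \le (\delta \sigma_1)^{-O_j(1)}$ for which $\Vert \alpha(s_2^j - s_2'^j) \Vert_{\R/\Z} \le \epsilon_1$ holds for $\gg (\delta \sigma_1 \sigma_2)^{O_j(1)} N_2^2$ pairs, writing $\alpha := \theta q^*$ and $\epsilon_1 := (\delta \sigma_1)^{-O_j(1)} N_1^{-j}$.

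Next, averaging over $s_2' \in S_2$ yields some fixed $s_2'$ and a set $E \subseteq [N_2]$ of size $|E| \gg (\delta \sigma_1 \sigma_2)^{O_j(1)} N_2$ on which $\Vert \alpha(s_2^j - s_2'^j) \Vert_{\R/\Z} \le \epsilon_1$. Since $\alpha s_2^j$ then clusters within $\epsilon_1$ of the common value $\alpha s_2'^j \pmod 1$, we get $|\sum_{s_2 \in E} e(\alpha s_2^j)| \ge |E|/2$ (provided $\epsilon_1$ is small enough; otherwise the smallness of $N_1$ places us in case 1). A second application of log-free Weyl for $j$-th powers, this time on $[N_2]$, produces $q'' \le (\delta \sigma_1 \sigma_2)^{-O_j(1)}$ with $\Vert \alpha q'' \Vert_{\R/\Z} \le (\delta \sigma_1 \sigma_2)^{-O_j(1)} N_2^{-j}$. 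Setting $q := q^* q''$ then delivers $q \le (\delta \sigma_1 \sigma_2)^{-O_j(1)}$ and the target bound, but only at scale $N_2^{-j}$.

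To upgrade this last bound from $N_2^{-j}$ to the sharper $(N_1 N_2)^{-j}$ scale stated in the lemma, I would exploit the first-step information that $\Vert \alpha M \Vert_{\R/\Z} \le \epsilon_1 \asymp N_1^{-j}$ already holds for many $M$ of size $\asymp N_2^j$; morally this pins $\theta q$ to scale $\epsilon_1/N_2^j \asymp (\delta \sigma_1)^{-O_j(1)} (N_1 N_2)^{-j}$. The main obstacle, where I expect the technical work to lie, is converting this heuristic into a genuine bound. The cleanest route seems to be to use the factorization $s_2^j - s_2'^j = (s_2 - s_2')(s_2^{j-1} + s_2^{j-2} s_2' + \cdots + s_2'^{j-1})$: for $j \le 2$ the difference set of $M$'s is already dense enough in $[-N_2^j, N_2^j]$ that a direct appeal to \cref{vino-lemma} with $\alpha$ in place of $\theta$ produces the extra $N_2^{-j}$ factor and hence the claimed $(N_1 N_2)^{-j}$ bound; for $j \ge 3$, the $M$'s are sparse and one needs an iterated bilinear Vinogradov step across the two factors $A := s_2 - s_2'$ and $B := s_2^{j-1} + \cdots + s_2'^{j-1}$, each ranging over a comparable diameter (on dyadically localized $S_2$), to recover the missing $N_2^{-j}$ gain without losing polynomial factors in $\delta, \sigma_1, \sigma_2$.
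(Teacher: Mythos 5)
Your first two steps are essentially sound, but the proposal stops short of the lemma at exactly the point that carries its content. After one Cauchy--Schwarz and two appeals to one-variable log-free Weyl you only reach $\Vert \theta q^* q'' \Vert_{\R/\Z} \le (\delta\sigma_1\sigma_2)^{-O_j(1)} N_2^{-j}$, and you explicitly defer the upgrade to the $(N_1N_2)^{-j}$ scale to a heuristic; that upgrade \emph{is} the lemma (the $N_2^{-j}$ statement is far too weak for the application in \cref{lem:crit-estimate}). Moreover the completions you sketch do not close it. For $j=2$ the ``direct appeal to \cref{vino-lemma}'' fails as stated: the values $M=s_2^j-s_2'^j$ attached to your $\gg \rho N_2^2$ good pairs need not form a $\rho^{O_j(1)}$-proportion of an interval of length $\asymp N_2^j$, since a given $M$ can arise from $\asymp \tau(M)$ pairs and the divisor function is not bounded by $\rho^{-O(1)}$; passing to distinct values therefore costs a factor which is $N_2^{o(1)}$ but not polynomial in $1/(\delta\sigma_1\sigma_2)$, reintroducing exactly the losses a ``log-free'' statement must avoid. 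For $j\ge 3$ the phrase ``iterated bilinear Vinogradov step'' is where all the work lies, and carrying it out (Weyl differencing in the second variable, a Vinogradov step in each factor with pigeonholing of denominators) amounts to proving a two-dimensional log-free Weyl inequality from scratch. A secondary, fixable point: your ``second application of log-free Weyl on $[N_2]$'' is applied to a sum over the subset $E$, not a complete sum; one needs an intermediate step (e.g. a Fej\'er/Erd\H{o}s--Tur\'an minorant converting concentration of $\alpha s_2^j$ modulo $1$ on a $\rho$-proportion of $[N_2]$ into largeness of $\sum_{n\le N_2} e(k\alpha n^j)$ for some $k\ll \rho^{-O(1)}$) before Weyl can be invoked.

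For comparison, the paper's proof is much shorter because it outsources precisely this difficulty: it applies Cauchy--Schwarz \emph{twice}, in both variables, to remove both indicators and arrive at the complete bilinear average $\mb{E}_{n_1,n_1'\in[N_1],\,n_2,n_2'\in[N_2]} e\big(\theta(n_1^j-n_1^{\prime j})(n_2^j-n_2^{\prime j})\big) \ge (\delta\sigma_1\sigma_2)^4$, and then cites the log-free multidimensional Weyl inequality \cite[Proposition~2.2]{GT14}, which yields the modulus $q\le(\delta\sigma_1\sigma_2)^{-O_j(1)}$ together with the $(N_1N_2)^{-j}$ approximation in one stroke (the alternative $N_i\le(\delta\sigma_1\sigma_2)^{-O_j(1)}$ absorbing the requirement that $\min_i N_i$ be large). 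If you want a self-contained argument along your lines, the natural repair is to perform that second Cauchy--Schwarz as well and then run the two-variable concatenation rigorously, rather than trying to recover the product scale after the fact from the single-variable conclusions.
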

\begin{remark} We will only need the cases $j = 1,2$, in which case of course the exponents may be taken to be absolute constants. \end{remark}
\begin{proof}
Write the condition as
\[ \big| \mb{E}_{n_1 \in [N_1], n_2 \in [N_2]} 1_{S_1}(n_1) 1_{S_2}(n_2) e(\theta n^j_1 n^j_2) \big| \ge \delta \sigma_1 \sigma_2 .\]
By two applications of the Cauchy--Schwarz inequality, we obtain
\[ \mb{E}_{n_1, n'_1 \in [N_1], n_2, n'_2 \in [N_2]} e \big( \theta (n^j_1 - n^{\prime j}_1)(n^j_2 - n^{\prime j}_2)\big) \ge (\delta \sigma_1\sigma_2)^{4} .\]
To handle this, we use the `log-free' multidimensional Weyl inequality \cite[Proposition~2.2]{GT14}; we remark that the published version of that paper omits the necessary constraint that $\min(N_i)$ be sufficiently large. 
\end{proof}

We now proceed to the main technical lemma of the section. Although we will only need this lemma for $j = 1,2$, it is no harder to prove it for general $j$.
\begin{lemma}\label{lem:crit-estimate}
Let $j \in \N$. Then there is a constant $L_j \ge 1$ such that the following holds. Let $k \ge 2$ be a natural number and let $\delta \in (0,\frac{1}{2})$. Let $M_1,\ldots, M_k$ be a sequence of integers such that the intervals $[M_i, (1 + \frac{1}{4k}) M_i)$ are disjoint. Set $Q := k^k \prod_{i = 1}^k \log  M_i$, and suppose the condition $\min_i M_i > Q^{L_j}$ is satisfied. For each $i$, suppose we are given a parameter $\eta_i$ satisfying $\frac{1}{8k} \le \eta_i \le \frac{1}{4k}$ and define $\mc{P}_i$ to be the set of primes satisfying $M_i \le p < M_i(1 +\eta_i)$, and set $S := \{p_1^j \cdots p_k^j : p_i \in \mc{P}_i\}$. Then $S$ is $(L_j, k, (M_1 \cdots, M_k)^j)$-diophantine.
\end{lemma}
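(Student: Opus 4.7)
The plan is to apply \cref{bilinear-standard} by viewing the product $p_1^j \cdots p_k^j = (p_1 \cdots p_{k-1})^j \cdot p_k^j$ as a bilinear form. Set $S_1 := \{p_1 \cdots p_{k-1} : p_i \in \mc{P}_i\}$, embedded in $[N_1]$ with $N_1 := 2\prod_{i<k} M_i$ (where unique factorisation, guaranteed by the disjointness of the intervals $[M_i,(1+\tfrac{1}{4k})M_i)$, ensures $|S_1| = \prod_{i<k}|\mc{P}_i|$), and set $S_2 := \mc{P}_k \subset [N_2]$ with $N_2 := 2M_k$. Then the hypothesis $|\mb{E}_{s \in S}e(\theta s)| \ge \delta$ becomes $|\mb{E}_{s_1 \in S_1, s_2 \in S_2}e(\theta s_1^j s_2^j)| \ge \delta$, matching the form required by \cref{bilinear-standard}.

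The second step is density bookkeeping. By the prime number theorem, valid since each $M_i > Q^{L_j}$ is extremely large, and using $\eta_i \ge \tfrac{1}{8k}$, we have $|\mc{P}_i| \gtrsim M_i/(k\log M_i)$. This yields $\sigma_1\sigma_2 \gtrsim 1/(C^{k}Q)$ for an absolute constant $C$. Choosing $L_j$ sufficiently large as a function of $j$ alone, the first alternative of \cref{bilinear-standard} (requiring $N_1$ or $N_2$ to be at most $(\sigma_1\sigma_2\delta)^{-O_j(1)}$) is precluded by the hypothesis $\min_i M_i > Q^{L_j}$. The second alternative then delivers $q \in \mb{N}$ with $q \le (C^{k}Q/\delta)^{O_j(1)}$ and $\Vert q\theta\Vert_{\R/\Z} \le (C^{k}Q/\delta)^{O_j(1)}/(N_1N_2)^j$, with $(N_1N_2)^j \asymp D = (M_1\cdots M_k)^j$.

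The final and most delicate step is converting these to the target form $q \le (k/\delta)^{L_j}$ and $\Vert q\theta\Vert_{\R/\Z} \le (k/\delta)^{L_j}/D$. I would split on $\delta$. If $\delta$ is small enough that $(k/\delta)^{L_j}/D \ge 1/2$ (which happens whenever $\delta \le 2k/D^{1/L_j}$, in particular when $\delta \le 2k/Q^{jk}$ given $M_i > Q^{L_j}$), the conclusion $\Vert q\theta\Vert_{\R/\Z} \le 1/2$ holds trivially with $q=1$. In the complementary regime $\delta > 2k/Q^{jk}$, we have $Q < (2k/\delta)^{1/(jk)}$, so the factor $Q^{O_j(1)}$ is polynomially controlled by $1/\delta$; combining with $M_i > Q^{L_j}$ (so $Q^{O_j(1)} < M_i^{O_j(1)/L_j}$ is negligible in comparison to the budget afforded by a large $L_j$) gives the target bound.

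The main obstacle is the tension between the factor $C^{k\cdot O_j(1)}$ arising from the bilinear approach (whose exponential dependence on $k$ is driven by the $\eta_i \sim 1/k$ density loss at each prime factor in $S_1$) and the polynomial-in-$k$ target $(k/\delta)^{L_j}$. Absorbing $C^{k\cdot O_j(1)}$ into $(k/\delta)^{L_j}$ with $L_j$ independent of $k$ is the crux of the proof; it relies crucially on the strong hypothesis $M_i > Q^{L_j}$ (which ensures that the relevant frequencies $\theta$ for which the conclusion is non-vacuous are confined to a regime where $Q$ and $C^k$ can be absorbed polynomially in $1/\delta$), possibly together with a finer decomposition of the bilinear form or an iterative application of \cref{bilinear-standard} to keep the $k$-dependence under control.
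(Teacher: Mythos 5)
Your opening move (the bilinear Cauchy--Schwarz via \cref{bilinear-standard}, with a density count giving $\sigma_1\sigma_2 \gtrsim C^{-k}Q^{-1}$) matches the paper's first step, but the final step of your argument --- converting the bound $q \le (C^kQ/\delta)^{O_j(1)}$ into $q \le (k/\delta)^{L_j}$ by ``splitting on $\delta$'' --- contains a genuine gap, and in fact an algebraic error. The regime $\delta > 2k/Q^{jk}$ gives $Q^{jk} > 2k/\delta$, i.e.\ $Q > (2k/\delta)^{1/(jk)}$, the \emph{opposite} of the inequality $Q < (2k/\delta)^{1/(jk)}$ you claim; so in this regime $Q$ is not controlled by any power of $1/\delta$. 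And no case split on $\delta$ can repair this: $Q = k^k\prod_i \log M_i$ is a fixed quantity that grows without bound as the $M_i$ grow, while $\delta$ may be, say, $1/4$. In that situation your ``trivial'' case is vacuous (it requires $\delta \lesssim k D^{-1/L_j}$, astronomically small), the conclusion is far from trivial since $D = (M_1\cdots M_k)^j$ dwarfs $(k/\delta)^{L_j}$, and the bilinear lemma alone only yields a modulus of size $Q^{O_j(1)}$, which is useless. The hypothesis $\min_i M_i > Q^{L_j}$ does not ``confine the relevant frequencies''; it cannot by itself absorb the $\prod_i\log M_i$ factor. (The $C^k$ factor you worry about is the harmless part: $C^k \le Q^{O(1)}$ since $Q \ge k^k$.)

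What is missing is the paper's second ingredient. After the bilinear step, in the hard case $\delta \ge 1/Q$ one only knows $\theta = a/q + \theta'$ with $q \le Q^{O_j(1)}$ and $|\theta'| \le Q^{O_j(1)}(M_1\cdots M_k)^{-j}$. The paper then pigeonholes in $p_2,\dots,p_k$ to fix $t = p_2^j\cdots p_k^j$ with $|\mb{E}_{p_1\in\mc{P}_1} e(\theta t p_1^j)| \ge \delta$ and applies the log-free Weyl estimate for primes (\cref{log-free-weyl-2}, essentially the $k=1$ case) to produce a modulus $q_0 \le (k/\delta)^{O_j(1)}$ with $\Vert \theta t q_0\Vert_{\R/\Z} \le (k/\delta)^{O_j(1)}M_1^{-j}$; the bilinear information is then used only to show, via $M_1 > Q^{L_j}$, that $|\theta' t q_0| < \frac{1}{2q}$, so that $\theta' t q_0$ can only be near the multiple $0$ of $1/q$, whence $\Vert \theta q_0\Vert_{\R/\Z} \le (k/\delta)^{O_j(1)}(M_1\cdots M_k)^{-j}$ --- the small modulus in the conclusion is $q_0$ from the prime estimate, not the $q$ from the bilinear step. (The hypothesis $M_i > Q^{L_j}$ is also what gives $(q,t)=1$ there.) A secondary point: the paper splits the product into even- and odd-indexed factors rather than your $(p_1\cdots p_{k-1})\cdot p_k$; this interleaving is exploited in the degenerate case $N_1 \le (Q/\delta)^{O_j(1)}$ to get $M_2\cdots M_k \le N_1^2 \le \delta^{-O_j(1)}$, so that $q := tq_0$ is admissible there --- with your lopsided split the degenerate case would need a separate (and different) treatment, which your proposal does not supply.
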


\begin{proof} 
We first note that the case $k = 1$ is also true and is essentially a standard result about exponential sums over powers of primes. We in fact need (a slight generalisation of) this result in our proof. Since it is hard to find an appropriate reference with the log-free bound that we require, we give this in \cref{log-free-weyl-2}. 

Suppose now that $k \ge 2$. Without loss of generality, assume $M_1 > M_2 > \cdots > M_k \ge 3$. Let $\theta \in \R/\Z$ and suppose that
\begin{equation}\label{lem32-assump} \big|\mb{E}_{p_1 \in \mc{P}_1,\dots, p_k \in \mc{P}_k}e(\theta p_1^j\cdots p_k^j)\big|\ge \delta.\end{equation}
We must show that there is some $q \in \N$ such that 
\begin{equation}\label{q-desire-conclusion} q \le (k/\delta)^{L_j} \quad \mbox{and} \quad \Vert q \theta \Vert_{\R/\Z} \le (k/\delta)^{L_j} (M_1\cdots M_k)^{-j}.\end{equation} 
We try applying \cref{bilinear-standard} with $N_1 := 2 \prod_{i \le k : i \operatorname{even}} M_i$ and $N_2 := 2 \prod_{i \le k : i \operatorname{odd}} M_i$. Define sets $S_1 \subset [N_1]$, $S_2 \subset [N_2]$ by $S_1 := \prod_{i \le k: i \operatorname{even}} \mc{P}_i$ and $S_2 := \prod_{i \le k: i \operatorname{odd}} \mc{P}_i$ (the stated containments are easily verified). Set $\sigma_i := |S_i|/N_i$. Since $M_i > Q > k^k$, it follows from the prime number theorem with classical error term (see e.g. \cite[Section~5.6]{IK-book}) that we have $|\mc{P}_i| \ge cM_i/ k \log M_i$ for some absolute $c > 0$. Therefore
\[ \sigma_1\sigma_2 = \frac{1}{4} \prod_{i=1}^{k}\frac{|\mc{P}_i|}{M_i}\ge \Big(\frac{c}{2k}\Big)^{k} \prod_{j=1}^{k} \frac{1}{\log M_i} \ge \big(\frac{c}{2}\big)^k Q^{-1} \gg Q^{-2},\] using in this last step that $Q > k^k$. 
Applying \cref{bilinear-standard}, and noting that $N_1 \le N_2$, it follows that either
\begin{equation}
\label{small-n-case-1}
N_1 \le (Q/\delta)^{O_j(1)}
\end{equation}
or else there is some $q \in \N$ with
\begin{equation}\label{q-exist} q \le  (Q/\delta)^{O_j(1)} \quad \mbox{and} \quad \Vert \theta q \Vert_{\R/\Z} \le (Q/\delta)^{O_j(1)} (M_1 \cdots M_k)^{-j}.\end{equation}
We leave aside \cref{small-n-case-1} for now, and assume that \cref{q-exist} holds.
If $\delta \le 1/Q$ then \cref{q-desire-conclusion} follows immediately (with $L_j$ equal to twice the $O_j(1)$ exponent). Therefore we may suppose henceforth that $\delta \ge 1/Q$. In particular, \cref{q-exist} gives (after doubling the implied constant in the exponents) that 
\[ q \le  Q^{O_j(1)} \quad \mbox{and} \quad \Vert \theta q \Vert_{\R/\Z} \le Q^{O_j(1)} (M_1 \cdots M_k)^{-j}.\]
Thus $\theta = \frac{a}{q} + \theta'$ for some $a \in \Z$, with 
\begin{equation}\label{theta-dash-upper} |\theta'| \le Q^{O_j(1)} (M_1 \cdots M_k)^{-j}.\end{equation}
We now return to the original sum \cref{lem32-assump}. By pigeonhole, there is a choice of $t = p_2^j \cdots p_k^j$ such that $\big| \mb{E}_{p_1 \in \mc{P}_1} e(\theta t p_1^j) \big| \ge \delta$. By \cref{log-free-weyl-2} (that is, essentially the case $k = 1$ of the present lemma) it follows that there is some $q_0 \le (k/\delta)^{O_j(1)}$ such that $\Vert \theta t q_0 \Vert_{\R/\Z} \le (k/\delta)^{O_j(1)} M_1^{-j}$. Since $\theta' = \theta - a/q$, this means that $\theta' t q_0$ is within $(k/\delta)^{O_j(1)} M_1^{-j}$ of $-atq_0/q$, an integer multiple of $1/q$. However, we may also note using \cref{theta-dash-upper} and the bound $p_2 \cdots p_k \le 3 M_2 \cdots M_k$ that 
\[ |\theta' t q_0| \le Q^{O_j(1)} (M_1 \cdots M_k)^{-j} \cdot (3 M_2\cdots M_k)^j \cdot (k/\delta)^{O_j(1)} \le Q^{O_j(1)} M_1^{-j} < \frac{1}{2q}. \] Here, in the penultimate step we used that $\delta \le 1/Q$ (and so $k/\delta \le Q^2$), and in the last step we invoked the assumption $M_1 > Q^{L_j}$ and the upper bound $q \le Q^{O_j(1)}$ (and assumed $L_j$ is large enough). Since $(k/\delta)^{O_j(1)} M_1^{-j} \le Q^{O_j(1)} M_1^{-j} < \frac{1}{2q}$ (for the aforementioned reasons) the only possible integer multiple of $1/q$ that $\theta' t q$ can be near is $0$, and therefore $|\theta' t q_0| \le (k/\delta)^{O_j(1)} M_1^{-j}$ and $q \mid at q_0$. Dividing through by $tq_0$, we obtain $|\theta'| \le (k/\delta)^{O_j(1)} (M_1 \cdots M_k)^{-O_j(1)}$. Note also that $(q,t) = 1$ since all prime factors of $t$ are at least $M_k > Q^{L_j} \ge q$, and therefore $q \mid aq_0$. Finally it follows that $\Vert \theta q_0 \Vert_{\R/\Z} = \Vert \theta' q_0 \Vert_{\R/\Z} \le |\theta'|q_0 \le (k/\delta)^{O_j(1)} (M_1 \cdots M_k)^{-j}$, which is the desired conclusion \cref{q-desire-conclusion}.

It remains to analyse the `small parameter' case \cref{small-n-case-1}, that is to say $N_1 \le (Q/\delta)^{O_j(1)}$. The assumption $\min_i M_i > Q^{L_j}$ certainly implies that $N_1 > Q^{L_j}$. Therefore (assuming $L_j$ large enough) we have $N_1 \le \delta^{-O_j(1)}$. It follows that \begin{equation}\label{m2mkbd} M_2 \cdots M_k = \prod_{\substack{i \le k \\ i \operatorname{even}}} M_i \cdot \prod_{\substack{i \le k - 1 \\ i \operatorname{even}}} M_{i+1} \le \prod_{\substack{i \le k \\ i \operatorname{even}}} M_i  \cdot \prod_{\substack{i \le k - 1 \\ i \operatorname{even}}} M_{i} \le N_1^2 \le \delta^{-O_j(1)}.\end{equation} As before, \cref{lem32-assump} implies that there is some $t = p_2^j \cdots p_k^{j}$ such that $|\mb{E}_{p_1 \in \mc{P}_1} e(\theta t p_1^j)| \ge \delta$. By \cref{log-free-weyl-2} it follows that there is some $q_0 \le (k/\delta)^{O_j(1)}$ such that $\Vert \theta t q_0 \Vert_{\R/\Z} \le (k/\delta)^{O_j(1)} M_1^{-j}$. Taking $q := tq_0$, we then have (using \cref{m2mkbd})
\[ q \le (M_2 \cdots M_k)^j (k/\delta)^{O_j(1)} \le (k/\delta)^{O_j(1)},\] and (using \cref{m2mkbd} again) $\Vert \theta q \Vert_{\R/\Z} \le (k/\delta)^{O_j(1)}M_1^{-j} \le (k/\delta)^{O_j(1)} (M_1 \cdots M_k)^{-j}$, which is once again the desired conclusion \cref{q-desire-conclusion}.
\end{proof}

\section{Fourier decomposition of a majorant for the primes}\label{sec-fourier-decomp}

In this section we give another technical ingredient for our later arguments. Here is the main result.

\newcommand\per{\operatorname{per}}
\begin{lemma}\label{lem4point3}
Let $X$ be a large parameter. Then there is a function $\tilde\Lambda : [X, 2X) \rightarrow \R_{\ge 0}$ with 
\begin{equation} \label{majorant} \tilde\Lambda(p) \gg \log X\end{equation} for all primes $p \in [X, 2X)$ and 
\begin{equation}\label{tilde-lam} \mb{E}_{x \in [X, 2X)} \tilde\Lambda(x) \ll 1\end{equation} such that the following is true. Let $c \in (0,1)$ be a constant. For any parameter $Q  \in \N$, $Q \le \log X$, there is a $(Q!)$-periodic function $\Lambda_{\per}$ satisfying
\begin{equation}\label{lam-per-props} \mb{E}_{x \in [X, 2X)} |\Lambda_{\per}(x)| \ll (\log Q)^{O(1)}\quad \mbox{and} \quad \Vert \Lambda_{\per} \Vert_{\infty} \ll Q^{2} \end{equation} together with a decomposition $\tilde\Lambda - \Lambda_{\per} = \sum_{i} g_i + h$ \textup{(}where the sum over $i$ is finite\textup{)} with the following properties. First, the function $h$ is small in $\ell^1$ in the sense that 
\begin{equation}\label{small-h-ell1} \mb{E}_{x \in [X, 2X)} |h(x)|  \ll Q^{-1}.\end{equation} Second, the functions $g_i$ are reasonably bounded in sup norm in the sense that
\begin{equation}\label{gi-ell-infty} \sum_i \Vert g_i \Vert_{\infty} \ll (\log X)^{O_c(1)}\end{equation} for all $i$. Finally, denoting $\Vert \wh{f} \Vert_{\infty} := \sup_{\theta \in \R/\Z} \big|\sum_{x \in [X,2X]} f(x) e(\theta x)\big|$ we have the estimate
\begin{equation}\label{sum-gi-gi-hat} \sum_i \Vert \wh{g}_i \Vert_{\infty}^c \Vert g_i \Vert_{\infty}^{1 - c} \ll X^{c} Q^{-c/4}.\end{equation}
Here, all implied constants may depend on $c$ but are effectively computable.
\end{lemma}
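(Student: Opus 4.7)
The natural choice for $\tilde\Lambda$ is an effective majorant for primes in $[X, 2X)$, built from a sieve so as to avoid any appeal to $L$-functions (and hence to Siegel zeros). I would take the truncated Selberg sieve
\[ \tilde\Lambda(n) := c_0\log X \cdot \Big(\sum_{\substack{d \mid n \\ d \le R}} \lambda_d\Big)^{\! 2} \cdot 1_{[X,2X)}(n), \]
with $R := X^{1/3}$, $\lambda_d$ the standard Selberg weights targeting primes in $[X,2X)$, and $c_0$ a small absolute constant. Both \cref{majorant,tilde-lam} follow from the classical Selberg analysis, which is elementary (Mertens-level) and fully effective. For the periodic part I would take
\[ \Lambda_{\per}(n) := \frac{W}{\varphi(W)}\cdot 1_{(n,W)=1}, \qquad W := Q!,\]
which is $W$-periodic, has mean $1$, and satisfies $\|\Lambda_{\per}\|_\infty \ll \log Q$ by Mertens' theorem, so \cref{lam-per-props} holds with plenty of slack.

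For the decomposition, expand the Selberg square as $\sum_{d_1,d_2 \le R}\lambda_{d_1}\lambda_{d_2} 1_{[d_1,d_2]\mid n}$ and apply the identity $1_{m\mid n} = m^{-1}\sum_{a=0}^{m-1} e(an/m)$ to rewrite $\tilde\Lambda(n) 1_{[X,2X)}(n)$ as a linear combination of pure frequencies $e(an/q)1_{[X,2X)}(n)$ with coefficients $c_{a,q}$ that are explicit convolutions of the Selberg weights. Split the resulting expression by the denominator $q$. The contribution from $q \mid W$ should reproduce $\Lambda_{\per}$ up to an $\ell^1$ error of order $1/Q$: using that $Q \le \log X$ makes $W$ tiny compared with $R$, the Selberg weights average uniformly on coprime residue classes modulo $W$, and this matching is a routine Selberg computation. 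Group the remaining $q \nmid W$ terms, dyadically in $q$, into the pieces $g_i$, and push everything with $q$ above a threshold $Q^{C(c)}$ into $h$.

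The Selberg toolkit then supplies the standard estimates $|c_{a,q}| \ll (\log X)^{O(1)}/q$ and effective log-free major-arc bounds on $\sum_n \tilde\Lambda(n)e(\theta n)$ for $\theta$ near $a/q$; these are essentially Vinogradov-style bounds applied to the bilinear sum in $d_1,d_2$, and require no deep input. From these, the sup-norm bounds \cref{gi-ell-infty} on each $g_i$ follow, together with a $Q^{-\Omega(1)}$ saving in each $\|\wh g_i\|_\infty$ that is ultimately enough to close \cref{sum-gi-gi-hat} after interpolation and dyadic summation (after choosing $C(c)$ large enough in terms of the exponent $1/4$ in the target bound). The $\ell^1$ bound \cref{small-h-ell1} follows from the divisor sum estimate $\sum_{q > Q^{C(c)},\,a} |c_{a,q}| \ll 1/Q$, together with the $O(1/Q)$ residual from the $q \mid W$ matching step absorbed into $h$.

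The main obstacle will be the matching between the $q \mid W$ piece of the Selberg expansion and the simple periodic function $\Lambda_{\per}$. These two are not literally equal, but the difference should be small in $\ell^1$; this requires delicate bookkeeping in the Selberg sieve and uniform effective estimates for the Selberg weights across all residue classes modulo $W$. A secondary technical difficulty is ensuring that the dyadic pieces $g_i$ have $L^\infty$ norm bounded independently of the scale of $q$, which may require a smooth rather than a hard truncation in the decomposition. The rest of the argument is essentially routine, but log-freeness of every constant must be tracked carefully to preserve the effectivity claimed in the lemma.
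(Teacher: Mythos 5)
Your starting point (a Selberg majorant expanded into Ramanujan sums $\tilde\Lambda(n)=\sum_{q\le R^2}c_q\sum_{(a,q)=1}e(an/q)$ with $|c_q|\ll \tau(q)^2/q$, then split by the size of the modulus) is the same as the paper's. But the decomposition you propose has a genuine gap at \cref{small-h-ell1}: you cannot put the entire tail $q>Q^{C(c)}$ into $h$. The claimed estimate $\sum_{q>Q^{C(c)},a}|c_{a,q}|\ll 1/Q$ is false — with $c_{a,q}=c_q$ one has $\sum_{(a,q)=1}|c_q|\asymp \tau(q)^2\phi(q)/q$, and even the triangle-inequality bound $\mb{E}_n|h(n)|\le\sum_{q>Q^{C}}|c_q|\,\mb{E}_n|c_q(n)|\ll\sum_{q>Q^C}\tau(q)^3/q$ is a positive power of $\log X$, not $Q^{-1}$; there is no $\ell^1$ smallness in the tail to be had by absolute values. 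The paper's mechanism is different: \emph{all} dyadic pieces $f_i$ up to $q\le R^2$ are retained among the $g_i$ (their contribution is controlled only through the Fourier/sup-norm interpolation bound \cref{sum-gi-gi-hat}, never through $\ell^1$ smallness), and $h$ consists solely of the exceptional sets where $|f_i(n)|$ exceeds a threshold $2^{ic/2}$. The $\ell^1$ bound on $h$ then comes from high moment estimates $\mb{E}_x|f_i(x)|^m\ll_m i^{C_m}$ (proved via Kluyver's identity and divisor-sum averages) together with Chebyshev. This thresholding-in-$n$ step is also what delivers \cref{gi-ell-infty}: a smooth truncation in $q$, as you suggest, does not control $\|f_i\|_\infty$, since $f_i(n)$ can be as large as $\sum_{q\sim 2^i}\tau(q)^2\gg 2^i$ at integers $n$ divisible by many moduli $q\sim 2^i$.

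A secondary, avoidable difficulty is your choice $\Lambda_{\per}=\frac{W}{\phi(W)}1_{(n,W)=1}$. This forces you to match the smooth-modulus part of the Selberg expansion to an external local model with $\ell^1$ error $O(1/Q)$, which you correctly flag as the main obstacle but do not resolve (and the error from the $Q<q$, $q\mid W$ smooth moduli is not obviously that small). The paper sidesteps this entirely by \emph{defining} $\Lambda_{\per}(n):=\sum_{q\le 2^{\lfloor\log_2 Q\rfloor}}c_q\sum_{(a,q)=1}e(an/q)$, i.e.\ the low-modulus part of the expansion itself; this is automatically $(Q!)$-periodic, the decomposition $\tilde\Lambda-\Lambda_{\per}=\sum_i f_i$ is exact, and the bounds \cref{lam-per-props} follow from the same moment estimates plus a crude $\ell^\infty$ bound. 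I would recommend adopting that definition and adding the thresholding/moment argument; as written, the proposal does not establish \cref{small-h-ell1} or \cref{gi-ell-infty}.
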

\begin{proof} We take $\tilde\Lambda$ to be a Selberg-type majorant for the primes. Rather than describe the construction explicitly here, we can just refer to \cite[Proposition 3.1]{green-tao-selberg}, which provides the relevant properties. Taking $F(n) = n$ in that proposition (thus the singular series $\mathfrak{S}_F$ as defined in \cite{green-tao-selberg} is $\asymp 1$) and $R := X^{1/10}$, we can take $\tilde\Lambda = \beta$, where $\beta$ is the function constructed in \cite[Proposition 3.1]{green-tao-selberg}. The desired majorant property \cref{majorant} is a consequence of \cite[Equation (3.1)]{green-tao-selberg}. 
The bound \cref{tilde-lam} is an absolutely standard fact about the Selberg sieve. It could be deduced within the framework of \cite{green-tao-selberg} by summing \cite[Equation (3.3)]{green-tao-selberg} over $n \in [X, 2X)$, and discarding the negligible contribution from all frequencies except $a/q = 0$. 
On the Fourier side we have (see \cite[Equation (7.7)]{green-tao-selberg})
\[ \tilde{\Lambda}(n) = \Big( \sum_{q \le R} \frac{\mu(q)}{\phi(q)}\Big)^{-1}\Big( \sum_{q \le R} \frac{\mu(q)}{\phi(q)} \sum_{(a,q) = 1} e\big(\frac{an}{q}\big) \Big)^2. \]
It is shown in \cite[Proposition 7.1]{green-tao-selberg}, following Ramar\'e and Ruzsa \cite{ramare-ruzsa}, that 
\[ \tilde\Lambda(n) = \sum_{q \le R^2} c_q \sum_{(a,q) = 1} e(an/q)\]
with $c_q$ supported on squarefrees with $q \le R^2$ and $|c_q| \ll \tau(q)^2/q$. Set $i_0 := \lfloor \log_2 Q\rfloor$, $i_1 :=  \lfloor A \log_2 \log X\rfloor$ for some $A = A(c)$ to be specified below, and finally set
\[ \Lambda_{\per}(n) := \sum_{q \le 2^{i_0}} c_q \sum_{(a,q) = 1} e(an/q), \qquad f_i(n) := \sum_{2^i <  q \le 2^{i+1}} c_q \sum_{(a,q) = 1} e(an/q)\] for $i_0 \le i < i_1$ and 
\[ f_{i_1}(n) := \sum_{2^{i_1} \le q \le R^2} c_q \sum_{(a,q) = 1} e(an/q).\]
It is then clear that $\Lambda_{\per}$ is $(Q!)$-periodic and that $\tilde\Lambda - \Lambda_{\per} = \sum_i f_i$. We now define $g_i, g'_i$ by `thresholding' the $f_i$, specifically by setting
\[ g_i(n) := f_i(n) 1_{|f_i(n)| \le 2^{ic/2}}, \qquad g_i'(n) := f_i(n) 1_{|f_i(n)| > 2^{ic/2}}\] for $i_0 \le i \le i_1$. Set $h := \sum_i g'_i$. The $\ell^{\infty}$ bound \cref{gi-ell-infty} is then immediate.

Next we establish \cref{small-h-ell1}. For this, we will use the moment estimates
\begin{equation}\label{ell-m-est} \mb{E}_{x \in [X, 2X)} |f_i(x)|^m  \ll_m i^{C_m}, \quad  i \in [i_0, i_1),\quad \mbox{and} \quad \mb{E}_{x \in [X, 2X)} |f_{i_1}(x)|^m  \ll_m (\log X)^{C_m}\end{equation} for $m \in \N$ and for some constants $C_m$, which we will establish below. Indeed, taking $m = \lceil 4/c\rceil$ in \cref{ell-m-est} yields
\begin{equation}\label{thresholding-1} \mb{E}_{x \in [X, 2X)} |g'_i(x)| \le 2^{ci ( 1 - m)/2} \mb{E}_{x \in [X, 2X]} |f_i(x)|^m \ll 2^{ci (1 - m)}i^{C_m} \ll 2^{-i}, \end{equation} uniformly for $i \in [i_0, i_1)$,
and similarly
\begin{equation}\label{thresholding-2} \mb{E}_{x \in [X, 2X)} |g'_{i_1}(x)| \le (\log X)^{cA(1 - m)/2} \mb{E}_{x \in [X, 2X]} |f_{i_1}(x)|^m \ll (\log X)^{cA(1 - m) + C_m} \ll (\log X)^{-A} \end{equation} provided $A$ is chosen large enough (depending only on $c$). The desired estimate \cref{small-h-ell1} is now immediate from the triangle inequality, the dominant contribution being from \cref{thresholding-1} with values $i \approx i_0$. (Here we use the assumption that $Q \le \log X$ to guarantee that the contribution from \cref{thresholding-2} is insignificant.)

Now we establish \cref{sum-gi-gi-hat}. It is enough to show that
\begin{equation}\label{gi-g-ell-infty} \Vert \wh{g_i} \Vert_{\infty} \ll X2^{-3i/4}\end{equation} for $i \in [i_0, i_1]$, since the desired estimate then follows using the $\ell^{\infty}$ bounds on the $g_i$ implicit in the definitions of these functions.
To show \cref{gi-g-ell-infty}, it suffices to show the non-thresholded estimates
\begin{equation}\label{fi-f-ell-infty} \Vert \wh{f_i} \Vert_{\infty} \ll X2^{-3i/4}\end{equation} for $i \in [i_0, i_1]$, from which \cref{gi-g-ell-infty} follows using \cref{thresholding-1,thresholding-2}.
From the definition of $f_i$, summing the geometric series and the bound $|c_q| \ll \tau(q)^2/q$, we have
\[ \big|\sum_{x\in [X,2X]} f_i(x) e(\theta x)\big|  \ll \sum_{2^i < q \le R^2} \frac{\tau(q)^2}{q} \sum_{(a,q) = 1} \min \big(X,  \Vert \theta - a/q \Vert_{\R/\Z}^{-1} \big) \] Since the fractions $a/q$ are $R^{-4}$-separated, the contribution from all except at most one $a/q$ will be (crudely) $\ll R^2 \cdot R^{4} \ll X 2^{-i}$. For the fraction $a/q$ closest to $\theta$, we have the trivial bound $\ll X\tau(q)^2/q$, which is $<X q^{-3/4} \ll X2^{-3i/4}$ by the divisor bound, and \cref{fi-f-ell-infty} (and therefore \cref{sum-gi-gi-hat}) follows.

We now return to establish the moment estimate \cref{ell-m-est}. An ingredient in the proof will be the (standard) estimate
\begin{equation}\label{divisor-avg} \sum_{P^+(d) \le Q} \frac{\tau(d)^C}{d} \ll_C (\log Q)^{2^C}.\end{equation}
To prove this, observe that the LHS is $\prod_{p \le Q} (1 + \frac{2^C}{p} + \frac{3^C}{p^2} + \dots) \ll_C \prod_{p \le Q} (1 + \frac{1}{p})^{2^C}$.

Turning to \cref{ell-m-est} itself, it suffices to prove the general estimate
\begin{equation}\label{gen-moment} \mb{E}_{x \in [X, 2X)} |f(x)|^m \ll (\log Q)^{O_{m,B}(1)},\end{equation} for $m \in \N$, where 
\[ f(x) = \sum_{P^+(q) \le Q} c_q \sum_{(a, q) = 1} e(\frac{an}{q}),\] the $c_q$ are supported on squarefrees and $|c_q| \le \tau(q)^{B}/q$. To prove such an estimate, we first write $f$ in physical space using Kluyver's identity $\sum_{(a,q) = 1} e(an/q) = \sum_{d \mid (n, q)} d \mu(q/d)$ for Ramanujan sums. This gives 
\begin{equation}\label{f-spatial} f(n) = \sum_{\substack{P^+(d) \le Q \\ d \mid n}} d \sum_{\substack{d \mid q \\ P^+(q) \le Q}} \mu\big(\frac{q}{d}\big) c_{q} = \sum_{\substack{P^+(d) \le Q \\ d \mid n}} \lambda_d, \quad \mbox{where} \quad \lambda_d := d\sum_{\substack{d \mid q \\ P^+(q) \le Q}} \mu\big(\frac{q}{d}\big) c_q.\end{equation} Now we have
\begin{equation}\label{eq4002} |\lambda_d| \le d \sum_{\substack{d \mid q \\ P^+(q) \le Q}} |c_q| \le \sum_{P^+(k) \le Q} \frac{\tau(kd)^B}{k} \le \tau(d)^B \sum_{P^+(k) \le Q} \frac{\tau(k)^B}{k} \ll \tau(d)^B (\log Q)^{2^B}\end{equation} by \cref{divisor-avg}.
Now observe that
\begin{align}\nonumber
\mb{E}_{n \in [X, 2X)} \big( \sum_{\substack{P^+(d) \le Q \\ d \mid n}} \tau(d)^B\big)^m & = \mb{E}_{n \in [X, 2X)}\sum_{P^+(d_1),\dots, P^+(d_m) \le Q} \big(\tau(d_1) \cdots \tau(d_m)\big)^B 1_{[d_1,\dots, d_m]\mid n}  \\ \nonumber &  \le \mb{E}_{n \in [X, 2X)} \tau([d_1,\dots, d_m])^{mB} 1_{[d_1,\dots, d_m] \mid n} \\  & \le \mb{E}_{n \in [X, 2X)} \sum_{P^+(d) \le Q} \tau(d)^{mB + m} 1_{d \mid n} \nonumber \\ & \ll \sum_{P^+(d) \le Q} \frac{\tau(d)^{mB + m}}{d} \ll (\log Q)^{O_{B,m}(1)},\label{eq4001}
\end{align} In the middle step here the key point was that the number of representations of $d$ as $[d_1,\dots, d_m]$ is at most $\tau(d)^m$, and in the penultimate step that $\mb{E}_{n \in [X, 2X)} 1_{d \mid n} \ll 1/d$ for all $d$. Combining \cref{f-spatial,eq4002,eq4001} gives \cref{gen-moment}, and so \cref{ell-m-est} follows.

The final task is to establish \cref{lam-per-props}. The first statement is immediate from \cref{gen-moment} and Cauchy--Schwarz. For the second statement (which is rather crude) one can proceed directly from the definition of $\Lambda_{\per}(n)$ using $|c_q| \ll 1$.
\end{proof}

We remark that from the first bound in \cref{lam-per-props} and the $Q!$-periodicity of $\Lambda_{\per}$ (or by direct proof) we have
\begin{equation}
\label{lam-short-interval} \mb{E}_{x \in I} \Lambda_{\per} (x) \ll (\log Q)^{O(1)} \end{equation}
for any interval of length $Q!$.

\begin{remarks}
It is possible to establish an analogue of \cref{lem4point3} with $\tilde\Lambda$ equal to the von Mangoldt function itself, taking $\Lambda_{\per}$ and the $g_i$ to be suitable Cram\'er approximants to the von Mangoldt function and $h = 0$. The details necessary to accomplish this may be found in \cite{Gre05}, though the context there was different. There are some advantages to this, for instance $\Lambda_{\per}$ is non-negative and subject to good $\ell^1$- and $\ell^{\infty}$-bounds. The drawback of proceeding this way is that the bounds are ineffective due to an application of the Siegel-Walfisz theorem. This can be corrected via the introduction of appropriate `Siegel-modified Cram\'er approximants' as in \cite{TT25} but this is quite technical. By passing to a suitable majorant as in \cref{lem4point3} we can avoid all Siegel zero issues entirely.
\end{remarks}

\section{An inverse theorem}\label{sec:inv-thm}

In this section we explore the consequences of an assumption
\begin{equation}\label{key-assump} \big| \mb{E}_{n \in [N],p \in \mc{P},p' \in \mc{P}'}^{\log}  f_1(n + \lambda p p') f_2(\lambda n p p')\big| \ge \delta \end{equation}
where $f_1, f_2 : \N \rightarrow \C$ are $1$-bounded, $\mc{P}$ consists of primes, $\mc{P}'$ of almost primes and $\lambda \in \N$ is some parameter. The reason for being interested in such an assumption was sketched in \cref{outline} and will be further apparent in \cref{sec6}.

The aim is to show that \cref{key-assump} implies that $\Vert f_1\Vert_{U^1_{\log}[N; q, H]}$ is large for suitable parameters $q, H$. (Recall from \cref{gp-local-def} the definition of these norms.) This result is directly inspired by \cite[Theorem 3.5]{Ric25}, a connection we shall elaborate upon later. Here is the technical statement of our main result.

\begin{proposition}\label{main-sec3}
There is an absolute constant $C \in \N$ such that the following holds. Let $\delta \in \R$ be a sufficiently small parameter and let $k \in \N$. Suppose that $\max( k, 1/\delta) \le \log \log N$ and $k \le \delta^{-10}$. Let $P_1, P_2, P'_1, P'_2$ be parameters with $\exp \exp ((\log \log N)^{1/10}) \le P'_1 < P'_2 < P_1 < P_2 < \exp((\log N)^{1/4})$ and $P_1 \ge (P'_2)^{10}$. Suppose that $\lambda \in \N$ satisfies $\lambda \le \exp((\log N)^{1/4})$, and that all prime factors of $\lambda$ are less than $P'_1$. Let $\mc{P}$ denote the set of primes in $[P_1, P_2)$ and suppose that $\mc{P}' \subset [P'_1, P'_2)$ is a set of `almost primes' of the following form: $\mc{P}' = \{ p_1 \cdots p_k : p_{\ell} \in I_{\ell}\}$, where $I_1,\dots, I_k \subset [P'_1, P'_2)$ are disjoint intervals, all with $\log \log (\max(I_{\ell})) - \log \log (\min(I_{\ell})) \ge k\delta^{-4.1}$, and the $p_{\ell}$ range over all primes in $I_{\ell}$ for $\ell \in [k]$.
Set $V := \lfloor \delta^{-C}\rfloor!$.  Suppose we have \cref{key-assump}. Then we have $\Vert f_1 \Vert_{U^1_{\log}[N; \lambda V, H]} \gg \delta^{O(1)}$ for any $H \in \N$ with $H \le P_1^{1/8}$.
\end{proposition}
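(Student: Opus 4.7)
The plan is to combine three ingredients established earlier: the Fourier-analytic decomposition of a prime majorant \cref{lem4point3}, the log-free diophantine estimates for almost-primes \cref{lem:crit-estimate}, and the iterated concatenation lemma \cref{lem:input-concat-2-iter}. The strategy is that the ``periodic mod $V$'' part of the prime measure on $\mc{P}$ is what ultimately produces the modulus $\lambda V$ in the conclusion, while the ``oscillatory'' pieces from the majorant decomposition will be shown negligible by exploiting the diophantine structure of $\mc{P}'$. As a first step we factorise the $\mc{P}'$-average via $p' = p_1 \cdots p_k$ with $p_\ell$ prime in $I_\ell$: since $1/p'$ factorises as $\prod_\ell 1/p_\ell$, so does the logarithmic average. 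After a standard dyadic decomposition of $\mc{P}$ into sub-intervals $[X, 2X)$ and pigeonholing we may assume $\mc{P} \subset [X, 2X)$ for some $X \in [P_1, P_2)$, and then replace the log-average over $p \in \mc{P}$ by one weighted by $\tilde\Lambda/\log X$, using $\tilde\Lambda(p) \gg \log X$ on primes.

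Next, with $Q := \lfloor\delta^{-C_0}\rfloor$ for a constant $C_0$ to be calibrated against the target $C$, apply \cref{lem4point3} to obtain $\tilde\Lambda = \Lambda_{\per} + \sum_i g_i + h$, splitting the hypothesis \cref{key-assump} into three contributions. The $h$-contribution is $\ll Q^{-1} = O(\delta^{C_0})$ by \cref{small-h-ell1} and is negligible. For the $g_i$-contributions, an application of Cauchy--Schwarz in the $(n, p_1, \ldots, p_k)$-variables combined with Plancherel on the $x$-variable (where $g_i$ lives) reduces the problem to bounding exponential sums of the form $\mb{E}^{\log}_{p_1, \ldots, p_k} e(\theta \lambda u\, p_1 \cdots p_k)$ for integers $u$ produced by the Cauchy--Schwarz and $\theta$ in the Fourier support of $\wh{g}_i$. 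By \cref{lem:crit-estimate} (with $j = 1$) such sums are negligible unless $\theta$ lies within $\delta^{O(1)}/(M_1 \cdots M_k)$ of a rational of denominator $\le \delta^{-O(1)}$, and combining this with the Fourier-tail estimate \cref{sum-gi-gi-hat} (with $c$ chosen sufficiently small in terms of the polynomial exponents produced above) bounds the total $g_i$-contribution by $O(\delta^{100})$. Thus only the $\Lambda_{\per}$-part contributes.

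Finally, expanding $\Lambda_{\per}$ over residue classes $p \equiv a \pmod{V}$ with $(a, V) = 1$, substituting $p = a + V s$, and absorbing the fixed shift $\lambda a\, p_1 \cdots p_k$ into $n$ via \cref{log-avg-shift} (valid since the shift has at most a small polynomial effect on logarithmic scale), we arrive after pigeonholing on $a$ at an inequality of the form
\[
\big| \mb{E}_{n, s, p_1, \ldots, p_k}^{\log} f_1\big(n + \lambda V s\, p_1 \cdots p_k\big)\, \tilde f_2(n, s, p_1, \ldots, p_k) \big| \gg \delta^{O(1)},
\]
for a $1$-bounded $\tilde f_2$, with $s$ ranging over an interval of length $\asymp X/V$. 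A sequence of Cauchy--Schwarz steps in the $(s, p_j)$-variables---arranged so that the pairs of $\tilde f_2$-evaluations produced have multiplicatively conjugate arguments that can be bounded trivially by $|\tilde f_2|^2 \le 1$---then converts this to the two-fold difference hypothesis of \cref{lem:input-concat-2-iter}, with sets $S_1, S_2$ each of the form $\lambda V$ times a product of a sub-family of $\{p_1, \ldots, p_k, s\}$. These sets are $(L, L', D)$-diophantine by \cref{lem:crit-estimate}, so the lemma concludes that $\|f_1\|_{U^1_{\log}[N; q \lambda V, H]} \gg \delta^{O(1)}$ for some $q \le \delta^{-O(1)}$; choosing $C$ large enough that $q \mid V = \lfloor\delta^{-C}\rfloor!$, we absorb $q$ into $V$ to obtain the desired bound. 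The main obstacle is the Cauchy--Schwarz step eliminating $\tilde f_2$: its argument depends multiplicatively on $n$, so the naive Cauchy--Schwarz in $n$ does not pair $\tilde f_2$-evaluations, and one must shift variables cleverly so that the pairing works and $|\tilde f_2|^2 \le 1$ can be invoked. A secondary technical point is verifying that the output length $H \le P_1^{1/8}$ is achievable from the scales of $S_1, S_2$, which should follow from the hypotheses $P_1 \ge (P'_2)^{10}$ and $k \ge 2$ via appropriate choice of the sub-families.
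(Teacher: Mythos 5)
There is a genuine gap, and it is the one you yourself flag at the end as ``the main obstacle'': the elimination of $f_2$. The hypothesis \cref{key-assump} involves $f_2(\lambda n p p')$, whose argument depends \emph{multiplicatively} on $n$, so no additive shift or Cauchy--Schwarz in $n$ pairs up evaluations of $f_2$ that can be bounded trivially. Your proposal defers this to an unspecified ``sequence of Cauchy--Schwarz steps arranged so that the pairing works,'' but this is precisely the crux, and it must be resolved \emph{before} the majorant decomposition, not after. The paper's proof (following Richter) opens with the dilation identity of \cref{lem:dilate}: one writes $\mb{E}^{\log}_{n} f_1(n+\lambda pp')f_2(\lambda npp') = \mb{E}^{\log}_{n} p'\mbf{1}_{p'\mid n} f_1(n/p'+\lambda pp')f_2(\lambda np) + O(\frac{\log P_2}{\log N})$, which decouples $p'$ from the argument of $f_2$; Cauchy--Schwarz in $(n,p)$ then removes $f_2$ entirely, producing a bilinear form in two copies $p'_1,p'_2\in\mc{P}'$ with the divisibility weight $p'_1p'_2\mbf{1}_{p'_1p'_2\mid n}$ (after using \cref{coprimality-lem} to replace $[p'_1,p'_2]$ by $p'_1p'_2$), and a second application of \cref{lem:dilate} converts this into the clean expression $f_1(np'_2+\lambda xp'_1)\overline{f_1(np'_1+\lambda xp'_2)}$. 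Without this opening move your plan cannot get started.

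The downstream structure is also not as you describe. The oscillatory pieces $g_i$ are not handled by reducing to exponential sums $\mb{E}_{p_1,\dots,p_k}e(\theta\lambda u\,p_1\cdots p_k)$ paired against $\wh{g}_i$; rather, the bilinear form $np'_2+\lambda xp'_1$ admits a change of variables $(n,x)\mapsto(n-\lambda p'_1t_2-\lambda p'_2t_1,\;x+p'_1t_1+p'_2t_2)$ that leaves the arguments of $f_1$ invariant up to shifts $\lambda(p_1'^2-p_2'^2)t_i$, and two Cauchy--Schwarz steps then isolate $\psi=g_i$ in the hypothesis of \cref{lem:input-concat-2-iter} (with the $j=1$ case of \cref{lem:crit-estimate} supplying the diophantine input for $\mc{P}'_1,\mc{P}'_2$); the resulting lower bound on $\Vert\wh{g}_i\Vert_\infty$ contradicts \cref{sum-gi-gi-hat}. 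For the periodic part, the $Q!$-periodicity shift again produces the \emph{differences of squares} $\lambda tQ!(p_1'^2-p_2'^2)$, and it is the set $\{p_1'^{2}-p_2'^{2}\}$ --- diophantine by the $j=2$ case of \cref{lem:crit-estimate} --- that feeds into \cref{lem:input-concat} and yields the $U^1_{\log}$ bound on $f_{1,a}$ after splitting into residues mod $\lambda Q!$ via \cref{residue-split}. Your proposed sets $S_1,S_2$ of the form ``$\lambda V$ times a product of a sub-family of $\{p_1,\dots,p_k,s\}$'' are linear in the $p_\ell$ and do not arise from any manipulation you have specified; in particular you have not explained where the long averaging length needed for $H\le P_1^{1/8}$ would come from (in the paper it comes from the auxiliary shift parameter $t\in[P_1^{1/2}]$ introduced by the periodicity trick).
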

\begin{remarks} 
For the rest of the section we write $\eps_0 := \frac{1}{10}$, thus the lower bound on $\log \log (\max(I_{\ell})) - \log \log (\min(I_{\ell}))$ is $k\delta^{-4 - \eps_0}$. Any sufficiently small absolute constant $\eps_0$ would do here. More generally, several of the assumptions on parameters are made so as to be comfortable for the required application and we do not claim these conditions are tight. For instance, the lower bound $k\delta^{-4-\eps_0}$ could be $k \delta^{-4} (\log (1/\delta))^C$ for an appropriate $C$.
\end{remarks}

\subsection{Setting up the proof of the inverse theorem}\label{subsec41}
The proof of \cref{main-sec3} is somewhat lengthy. We prepare the ground by defining some key parameters and observing simple preliminary bounds. In the proof $C_1 < C_2$ are absolute constants, with $C_1$ assumed to be sufficiently large and $C_2$ assumed sufficiently large in terms of $C_1$. We will write $Q := \lfloor \delta^{-C_2}\rfloor$.

Next we point out some consequences of the (somewhat elaborate) conditions on parameters in the statement of \cref{main-sec3}. First, the $P'_i, P_i$ are enormously larger than powers of $Q!$ (and a fortiori powers of $\delta^{-O(1)}$). Indeed $P'_1 \ge \exp\exp((\log \log N)^{1/10})$ whilst $Q! \le \exp(\delta^{-O(C_2)}) \le \exp((\log \log N)^{O(C_2)})$, using here the assumption that $1/\delta \le \log \log N$.

Second, we have \begin{equation}\label{p1p2k} P'_1 > (k \log P'_2)^{kL}\end{equation} for any fixed constant $L$ (assuming $N$ sufficiently large in terms of $L$). This is easily confirmed using the assumptions $P'_1 > \exp\exp((\log \log N)^{1/10}) > \exp((\log \log N)^3)$, $P'_2 \le N$ and $k \le \log \log N$, and will be used (twice) to verify the key condition in \cref{lem:crit-estimate}.

Third and finally, we note that all the $P_i$ parameters are significantly smaller than $N$, and one has for example $\frac{\log P_2}{\log N} \ll \delta^{10}$, which will be used several times in the analysis to assert that error terms coming from \cref{log-avg-shift} are negligible.

Next we record the fact that, under the stated conditions, the elements of $\mc{P}'$ are almost pairwise coprime. If $\mathcal{N}$ is any finite set of positive integers, we define
$\gamma(\mathcal{N}) := \mb{E}_{n, n' \in \mathcal{N}}^{\log} (n, n') - 1$, where $(n,n')$ is the gcd of $n,n'$. This is a measure of the pairwise coprimality of elements of $\mathcal{N}$; note that $\gamma(\mathcal{N}) \ge 0$ always, and that if $\gamma(\mathcal{N})$ is small then we expect the elements of $\mathcal{N}$ to be mostly coprime. Recall that $\eps_0 := \frac{1}{10}$ (though this is irrelevant to the following lemma).
\begin{lemma}\label{coprimality-lem}
Under the conditions of \cref{main-sec3}, we have $\gamma(\mc{P}') \le \delta^{4 + \eps_0/2}$.
\end{lemma}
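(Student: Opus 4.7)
The key observation is that $\mc{P}'$ has a direct product structure: writing $\mc{P}_\ell$ for the set of primes in $I_\ell$, every element of $\mc{P}'$ has a unique representation $n = \prod_\ell p_\ell$ with $p_\ell \in \mc{P}_\ell$, because the intervals $I_\ell$ are disjoint. Consequently, for two elements $n = \prod_\ell p_\ell$ and $n' = \prod_\ell p'_\ell$ one has $(n,n') = \prod_\ell (p_\ell, p'_\ell)$, so that
\[ \frac{(n,n')}{nn'} = \prod_\ell \frac{(p_\ell, p'_\ell)}{p_\ell p'_\ell}. \]
The plan is simply to multiply this out.

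Setting $S_\ell := \sum_{p \in \mc{P}_\ell} 1/p$ and $T_\ell := \sum_{p \in \mc{P}_\ell} 1/p^2$, summing the above over $(n,n') \in \mc{P}' \times \mc{P}'$ factors as a product over $\ell$, and dividing by $(\sum_{n \in \mc{P}'} 1/n)^2 = \prod_\ell S_\ell^2$ yields
\[ \mb{E}^{\log}_{n,n' \in \mc{P}'}(n,n') = \prod_{\ell=1}^k \left(1 + \frac{S_\ell - T_\ell}{S_\ell^2}\right). \]
The term $T_\ell$ is negligible since $T_\ell \ll 1/P'_1$ is vastly smaller than anything relevant. So it remains to bound $\sum_\ell 1/S_\ell$ from above, and then take $\exp$ of this bound and subtract~$1$.

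For a lower bound on $S_\ell$ I would invoke Mertens' theorem on intervals (in the effective form that follows from the prime number theorem with classical error term, as already cited earlier in the paper), which gives
\[ S_\ell = \sum_{p \in I_\ell} \frac{1}{p} = \log\log(\max I_\ell) - \log\log(\min I_\ell) + O\Big(\tfrac{1}{\log P'_1}\Big). \]
The hypothesis of \cref{main-sec3} gives $\log\log(\max I_\ell) - \log\log(\min I_\ell) \ge k \delta^{-4-\eps_0}$, while the error $1/\log P'_1$ is utterly dwarfed by this lower bound (since $P'_1 \ge \exp\exp((\log\log N)^{1/10})$ and $1/\delta \le \log\log N$). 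Hence $1/S_\ell \le \delta^{4+\eps_0}/k \cdot (1 + o(1))$.

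Putting everything together,
\[ \prod_{\ell=1}^k \left(1 + \frac{1}{S_\ell}\big(1 + O(T_\ell/S_\ell)\big)\right) \le \exp\!\Big(\sum_\ell \tfrac{1}{S_\ell}(1 + o(1))\Big) \le \exp\!\big((1+o(1))\delta^{4+\eps_0}\big), \]
and subtracting $1$ gives a bound $\ll \delta^{4+\eps_0}$. The extra slack between $\eps_0$ and $\eps_0/2$ easily absorbs the constant factor for $\delta$ sufficiently small, yielding $\gamma(\mc{P}') \le \delta^{4+\eps_0/2}$. No step here looks like a serious obstacle; the only subtlety is keeping track of the $T_\ell/S_\ell^2$ correction and confirming it is negligible relative to $\delta^{4+\eps_0}$, which is obvious from the lower bound $P'_1 \gg 1$.
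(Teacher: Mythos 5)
Your proposal is correct and follows essentially the same route as the paper: factor the gcd across the $k$ coordinates using the disjointness of the intervals, bound each per-coordinate term $\gamma(\mc{P}_\ell)$ by $\big(\sum_{p\in\mc{P}_\ell}1/p\big)^{-1}$, apply Mertens' theorem together with the hypothesis $\log\log(\max I_\ell)-\log\log(\min I_\ell)\ge k\delta^{-4-\eps_0}$, and finish with $\prod_\ell(1+x_\ell)-1\le e^{\sum_\ell x_\ell}-1\ll\delta^{4+\eps_0}\le\delta^{4+\eps_0/2}$. The paper's proof is the same argument, with the $T_\ell$ correction discarded immediately via $\sum_p(p-1)/p^2<\sum_p 1/p$.
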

\begin{proof}
If $\mc{P}_*$ is a set of primes and $p, p' \in \mc{P}_*$ then $(p, p') = 1$ unless $p = p'$, and so if we denote by $\mc{P}_{\ell}$ the set of primes in $I_{\ell}$ we have
 \begin{equation}\label{gam-primes} \gamma(\mc{P}_{\ell}) = \Big( \sum_{p \in \mc{P}_{\ell}} \frac{1}{p} \Big)^{-2}\sum_{p \in \mc{P}_{\ell}} \frac{p-1}{p^2} < \Big(\sum_{p \in \mc{P}_{\ell}} \frac{1}{p} \Big)^{-1}.\end{equation}
Now since $\log \log (\max(I_{\ell})) - \log \log (\min(I_{\ell})) \ge k\delta^{-4 - \eps_0}$, it follows from Mertens' theorem (see e.g. \cite[Theorem~5.4]{Kou19}) and \cref{gam-primes} that we have
$\max_{\ell} \gamma(\mc{P}_{\ell}) \le 2\delta^{4+\eps_0}/k$.
It follows that
\begin{align*} \gamma(\mc{P}') & = \mb{E}^{\log}_{p_1, p'_1 \in \mc{P}_1, \dots, p_k, p'_k \in \mc{P}_k} (p_1 \cdots p_k, p'_1,\cdots, p'_k) - 1 =  \prod_{\ell = 1}^k \mb{E}^{\log}_{p_{\ell}, p'_{\ell} \in \mc{P}_{\ell}} (p_{\ell}, p'_{\ell}) - 1 \\ & = \prod_{\ell = 1}^k (1 + \gamma(\mc{P}_{\ell})) - 1 \le \Big(1 + \frac{2\delta^{4+\eps_0}}{k}\Big)^k - 1 \le e^{2\delta^{4+\eps_0}} - 1 \le \delta^{4+\eps_0/2} .\qedhere\end{align*}
\end{proof}
As we said, the proof of \cref{main-sec3} is lengthy. Moreover, the logic is somewhat complicated, since it is difficult to state self-contained intermediate lemmas. For reference we summarise the proof structure now. 
\begin{itemize}
    \item We proceed directly from the assumption \cref{key-assump} via a series of steps to show that either \cref{eq304a} or \cref{eq304b} below holds.
    \item We then aim to show that \cref{eq304a} leads to a contradiction. This is first done subject to an unproven claim \cref{gen-avg-tau}.
    \item Claim \cref{gen-avg-tau} is proven by contradiction. This task is quickly reduced to showing that statements \cref{gen-avg-tau-2} and \cref{tau-interval} imply \cref{large-psi-fourier}, which is then a somewhat lengthy undertaking.
    \item At this point we have confirmed that \cref{eq304a} cannot hold. Therefore (by the first bullet point) \cref{eq304b} holds.
    \item We then proceed directly from \cref{eq304b} to the desired conclusion via a quite lengthy (but linear) sequence of manipulations.    
\end{itemize}

\subsection{Proof of the inverse theorem}

We turn now to the proof of \cref{main-sec3}. 

\begin{proof} Throughout the proof we will freely use the fact that $N$ is sufficiently large and that $\delta$ is sufficiently small. The starting assumption is \cref{key-assump}. We start by removing the function $f_2$ using essentially the same manipulation as in \cite[Theorem 5.2]{Ric25}. First observe that, for each $p, p'$, an application of \cref{lem:dilate} yields
\[ \mb{E}_{n \in [N]}^{\log}  f_1(n + \lambda p p') f_2(\lambda n p p') = \mb{E}_{n \in [N]}^{\log}  p' \mbf{1}_{p' \mid n} f_1\big(\frac{n}{p'}  + \lambda p p'\big) f_2(\lambda n p p') + O\big(\frac{\log P_2}{\log N}\big).\] Averaging over $p'$ (and using the upper bound $\frac{\log P_2}{\log N} \ll \delta^3$) gives
\[ \mb{E}^{\log}_{n \in [N],p' \in \mc{P}'} f_1(n + \lambda p p') f_2(\lambda n p p') = \mb{E}^{\log}_{n \in [N],p' \in \mc{P}'}  p' \mbf{1}_{p' \mid n} f_1\big(\frac{n}{p'}  + \lambda p p'\big) f_2(\lambda n p) + O(\delta^3).\] Write $g(p)$ for the expression on the left, and $\tilde g(p)$ for the first expression on the right; thus $\tilde g(p) = g(p) + \eps(p)$ with $|\eps(p)| \ll \delta^3$.
Now the assumption is that $|\mb{E}^{\log}_{p \in \mc{P}} g(p)| \ge \delta$. By Cauchy--Schwarz (since $g$ is $1$-bounded) we have $\mb{E}^{\log}_{p \in \mc{P}} |g(p)|^2 \ge \delta^2$. Therefore $\mb{E}^{\log}_{p \in \mc{P}} |\tilde g(p)|^2 \ge \delta^2 - 2\mb{E}^{\log}_{p \in \mc{P}} |\eps(p)| |g(p)| -  \mb{E}^{\log}_{p \in \mc{P}} |\eps(p)|^2\ge \delta^2/2$, using the $1$-boundedness of $g$ to estimate the second term. 
That is, 
\[  \mb{E}^{\log}_{p \in \mc{P}} \Big| \mb{E}_{n \in [N],p' \in \mc{P}'}^{\log}  p' \mbf{1}_{p' \mid n} f_1\big( \frac{n}{p'} + \lambda p p'\big) f_2(\lambda n p) \Big|^2 \ge \delta^2/2.\] Using Cauchy--Schwarz on the inner sum (and the $1$-boundedness of $f_2$) gives
\[  \mb{E}_{p \in \mc{P}}^{\log}  \mb{E}_{n \in [N]}^{\log} \Big|\mb{E}_{p' \in \mc{P}'}^{\log}  p' \mbf{1}_{p' \mid n} f_1\big( \frac{n}{p'} + \lambda p p'\big)  \Big|^2 \ge \delta^2/2.\] 
We now pass to a non-logarithmic average in the $p$ variable, on a suitable dyadic interval. To do this, first partition $[P_1, P_2]$ into intervals $I$ with $\frac{3}{2} \le \max(I)/\min(I) \le 2$. By averaging, there is some such $I$ for which 
\[  \mb{E}_{p \in \mc{P} \cap I}^{\log}  \mb{E}_{n \in [N]}^{\log} \Big|\mb{E}_{p' \in \mc{P}'}^{\log}  p' \mbf{1}_{p' \mid n} f_1\big( \frac{n}{p'} + \lambda p p'\big)  \Big|^2 \ge \delta^2/2.\] Let $X$ be such that $I \subset [X, 2X)$. We introduce the majorant $\tilde \Lambda$ from \cref{lem4point3}. Since the logarithmic weight $\frac{1}{p}$ varies by a factor at most $2$ on $I$, it follows that 
\[  \mb{E}_{x \in [X, 2X]}\tilde\Lambda(x) \mb{E}_{n \in [N]}^{\log} \Big|\mb{E}_{p' \in \mc{P}'}^{\log}  p' \mbf{1}_{p' \mid n} f_1\big( \frac{n}{p'} + \lambda x p'\big)  \Big|^2 \gg \delta^2.\]   Expanding out the square gives
\begin{equation}\label{eq301}  \mb{E}_{x \in [X, 2X)} \tilde\Lambda(x) \mb{E}_{n \in [N],p_1' \in \mc{P}',p_2' \in \mc{P}'}^{\log} p_1'p'_2 \mbf{1}_{[p'_1, p'_2] \mid n} f_1\big( \frac{n}{p'_1} + \lambda x p'_1\big) \overline{f_1\big( \frac{n}{p'_2} + \lambda x p'_2\big)} \gg \delta^2.\end{equation}  The next technical reduction is to replace the cutoff $\mbf{1}_{[p'_1, p'_2] \mid n}$ with $\mbf{1}_{p'_1 p'_2 \mid n}$, which we do using the fact that the elements of $\mc{P}'$ are mostly coprime due to \cref{coprimality-lem}. Let us justify this carefully. Since $f_1, f_2$ are $1$-bounded and $\mb{E}_{x\in [X,2X]}\wt{\Lambda}(x)\ll 1$, the error in making this switch in the LHS of \cref{eq301} is bounded up to a constant factor by 
\begin{equation}\label{eq302} \mb{E}_{n\in [N],p_1',p'_2\in \mc{P}'}^{\log}p_1'p_2' \big|\mbf{1}_{[p_1',p_2']\mid n} - \mbf{1}_{p_1'p_2'\mid n}\big|.\end{equation}
We have the pointwise bound $\big|\mbf{1}_{[p_1',p_2']\mid n} - \mbf{1}_{p_1'p_2' \mid n}\big| \le 2 \mbf{1}_{(p_1',p_2')\neq 1} \mbf{1}_{[p'_1, p'_2] \mid n}$ and therefore 
\[ \mb{E}^{\log}_{n \in [N]} \big|\mbf{1}_{[p_1',p_2']\mid n} - \mbf{1}_{p_1'p_2' \mid n}\big| \le 2 \mbf{1}_{(p'_1, p'_2) \ne 1} \mb{E}_{n \in [N]}^{\log} \mbf{1}_{[p'_1, p'_2] \mid n} \le \frac{4 \mbf{1}_{(p'_1, p'_2) \ne 1}}{[p'_1,p'_2]},\] using in the last step that $p'_1, p'_2$ are much smaller than $N$. It follows that \cref{eq302} is bounded above by $4\mb{E}_{p_1',p_2'\in \mc{P}'}^{\log}(p'_1,p'_2) \mbf{1}_{(p'_1, p'_2) \ne 1}$. Using the pointwise bound $(p'_1,p'_2) \mbf{1}_{(p'_1, p'_2) \ne 1} \le 2 ((p'_1, p'_2) - 1)$, this in turn is bounded by $8\mb{E}_{p_1',p'_2\in \mc{P}'}^{\log} ((p'_1,p'_2) - 1) = 8 \gamma(\mc{P}')$. By \cref{coprimality-lem}, we see that \cref{eq302} is bounded by $O(\delta^4)$. Therefore, as claimed, we may replace \cref{eq301} by
\begin{equation}\label{eq303} \Big| \mb{E}_{x \in [X, 2X)} \tilde\Lambda(x) \mb{E}_{n \in [N],p_1',p'_2 \in \mc{P}'}^{\log} p_1'p'_2 \mbf{1}_{p'_1 p'_2 \mid n} f_1\big( \frac{n}{p'_1} + \lambda x p'_1\big) \overline{f_1\big( \frac{n}{p'_2} + \lambda x p'_2\big)} \Big| \gg \delta^2.\end{equation} 
The reason for having replaced \cref{eq301} with \cref{eq303} is that we may now invoke \cref{lem:dilate} (with $q = p'_1 p'_2$) to conclude that 
\begin{equation}\label{eq304}  \Big|\mb{E}_{x \in [X, 2X)} \tilde\Lambda(x) \mb{E}_{n \in [N], p_1',p'_2 \in \mc{P}'}^{\log} f_1\big( n p'_2 + \lambda x p'_1\big) \overline{f_1\big( n p'_1 + \lambda x p'_2\big)}\Big| \gg \delta^2.\end{equation} 
We now apply \cref{lem4point3} with parameter $Q = \lfloor\delta^{-C_2}\rfloor$ and constant $c := 1/4C_1$. Observe that the required inequality
\begin{equation}\label{q-logx} Q \le \log X\end{equation} is true and follows from the choice of parameters, using here that $X \ge P_1$.

Let $\Lambda_{\per}$ be the $Q!$-periodic function as in that lemma. Our aim is to replace $\tilde\Lambda$ in \cref{eq304} by $\Lambda_{\per}$.

From \cref{eq304} and the triangle inequality, one of the following two statements holds:
\begin{equation}\label{eq304a}  \Big|\mb{E}_{x \in [X, 2X)} (\tilde\Lambda - \Lambda_{\per})(x) \mb{E}_{n \in [N], p_1',p'_2 \in \mc{P}'}^{\log} f_1\big( n p'_2 + \lambda x p'_1\big) \overline{f_1\big( n p'_1 + \lambda x p'_2\big)}\Big| \gg \delta^2,\end{equation} or
\begin{equation}\label{eq304b}  \Big|\mb{E}_{x \in [X, 2X)} \Lambda_{\per}(x) \mb{E}_{n \in [N], p_1',p'_2 \in \mc{P}'}^{\log} f_1\big( n p'_2 + \lambda x p'_1\big) \overline{f_1\big( n p'_1 + \lambda x p'_2\big)}\Big| \gg \delta^2.\end{equation} We analyse these two possibilities in turn. In the analysis we will use several times that 
\begin{equation}\label{crude-lam-lamper} \mb{E}_{x \in [X,2X)} |(\tilde\Lambda - \Lambda_{\per})(x)| \ll (\log Q)^{O(1)},\end{equation} which follows from \cref{lam-per-props} and the triangle inequality (since $\tilde\Lambda$ is non-negative).\vspace*{10pt}

\emph{Analysis of \cref{eq304a}.} We begin by dyadically localising the (two copies of) the set $\mc{P}'$. Recall that $\mc{P}' = \{p_1 \cdots p_k : p_i \in I_i\}$. Since $\max(I_i)/\min(I_i) \ge 10$, we can decompose each $I_i$ as a disjoint union of intervals $I_{i,j}$, each of the form $[Y, (1 + \eta_{i,j}) Y]$ for some $\eta_{i,j}$ satisfying $\frac{1}{8k} \le \eta_{i,j} \le \frac{1}{4k}$. We then have a corresponding decomposition $\mc{P}'= \bigcup_{j_1,\dots, j_k} \mc{P}'_{j_1,\dots j_k}$, where $\mc{P}'_{j_1,\dots, j_k} := \{ p_1 \cdots p_k : p_i \in I_{i, j_i}\}$. Note that, since $(1 + \frac{1}{4k})^k < 2$, each $\mc{P}'_{j_1,\dots, j_k}$ is contained in a dyadic interval. By averaging, there are $\vec{j} = (j_1,\dots, j_k)$ and $\vec{j}' = (j'_1,\dots, j'_k)$ such that 
\begin{equation}\label{eq306a}  \mb{E}_{p_1' \in \mc{P}'_{\vec{j}},p'_2\in \mc{P}_{\vec{j}'}'}^{\log}\Big|\mb{E}_{x \in [X, 2X)} (\tilde{\Lambda} - \Lambda_{\per})(x) \mb{E}_{n \in [N]}^{\log} f_1\big( n p'_2 + \lambda x p'_1\big) \overline{f_1\big( n p'_1 + \lambda x p'_2\big)} \Big|\gg \delta^2.\end{equation} For notational brevity, write $\mc{P}'_1 := \mc{P}'_{\vec{j}}$ and $\mc{P}'_2 := \mc{P}'_{\vec{j}'}$. 
As $\mc{P}_1',\mc{P}'_2$ are each contained in dyadic intervals, we can remove the logarithmic averaging to obtain
\begin{equation}\label{eq306b}  \mb{E}_{p_1'\in \mc{P}_1',p'_2 \in \mc{P}'_2}\Big|\mb{E}_{x \in [X, 2X)}(\tilde{\Lambda} - \Lambda_{\per})(x)  \mb{E}_{n \in [N]}^{\log} f_1\big( n p'_2 + \lambda x p'_1\big) \overline{f_1\big( n p'_1 + \lambda x p'_2\big)} \Big|\gg \delta^{2}.\end{equation}
The next several manipulations leading to \cref{eq306e} are straightforward and are aimed to replacing the logarithmic average over $n$ by an ordinary average on an appropriate subinterval. We first discard the contribution from small values of $N$. Set $N' := e^{(\log N)^{3/4}}$ (say). Writing \cref{eq306b} as
\[ \mb{E}_{p_1'\in \mc{P}_1',p'_2 \in \mc{P}'_2}\Big|\mb{E}_{x \in [X, 2X)} (\tilde{\Lambda} - \Lambda_{\per})(x) \sum_{n \in [N]} \frac{1}{n} f_1\big( n p'_2 + \lambda x p'_1\big) \overline{f_1\big( n p'_1 + \lambda x p'_2\big)} \Big|\gg \delta^{2} H_N,\](where $H_N$ is the harmonic sum), using \cref{crude-lam-lamper} we see that the contribution to the LHS from $n \le N'$ is bounded by $H_{N'} (\log Q)^{O(1)} < \delta^{10} H_N$, using here that $1/\delta \le \log \log N$.

Since $\frac{H_N}{H_N - H_{N'}} \approx 1$, it follows that we may replace \cref{eq306b} by
\[ \mb{E}_{p_1'\in \mc{P}_1',p'_2 \in \mc{P}'_2}\Big|\mb{E}_{x \in [X, 2X)}  (\tilde{\Lambda} - \Lambda_{\per})(x)\mb{E}_{n \in [N',N]}^{\log} f_1\big( n p'_2 + \lambda x p'_1\big) \overline{f_1\big( n p'_1 + \lambda x p'_2\big)} \Big|\gg \delta^{2}.\]
We now break $[N',N]$ into intervals $I$ whose lengths satisfy $e^{(\log N)^{1/2}} \le |I| \le 2e^{(\log N)^{1/2}}$. By pigeonhole there exists such an interval for which 
\[  \mb{E}_{p_1'\in \mc{P}_1',p_2' \in \mc{P}_2'}\Big|\mb{E}_{x \in [X, 2X)} (\tilde{\Lambda} - \Lambda_{\per})(x) \mb{E}_{n \in I}^{\log} f_1\big( n p'_2 + \lambda x p'_1\big) \overline{f_1\big( n p'_1 + \lambda x p'_2\big)} \Big|\gg \delta^{2}.\] The weight $1/n$ varies by at most $1 + O(|I| \cdot N'^{-1})$ on $I$ and so, using \cref{crude-lam-lamper}, we can justify replacing $\mb{E}^{\log}_{n \in I}$ with a uniform average $\mb{E}_{n \in I}$, thus obtaining
\begin{equation}\label{eq306e}  \mb{E}_{p_1'\in \mc{P}_1',p_2' \in \mc{P}'_2}\Big|\mb{E}_{x \in [X, 2X)} (\tilde{\Lambda} - \Lambda_{\per})(x) \mb{E}_{n \in I}f_1\big( n p'_2 + \lambda x p'_1\big) \overline{f_1\big( n p'_1 + \lambda x p'_2\big)} \Big|\gg \delta^{2}.\end{equation}
Our plan now is to use the decomposition $\tilde\Lambda - \Lambda_{\per} = \sum_i g_i + h$ from \cref{lem4point3} in order to obtain a contradiction from  \cref{eq306e}. To do this, we claim that for a general function $\psi : [X, 2X) \rightarrow \C$ we have 
\begin{align}\nonumber
\mb{E}_{p_1'\in \mc{P}_1',p'_2 \in \mc{P}'_2} \Big| \mb{E}_{x \in [X, 2X)} & \psi(x)  \mb{E}_{n \in I} f_1\big( n p'_2 + \lambda x p'_1\big) \overline{f_1\big( n p'_1 + \lambda x p'_2\big)}\Big| \\ & \ll \min \Big( \mb{E}_{x \in [X, 2X)} |\psi(x)|  ,  \frac{k}{X^c} \Vert \wh{\psi} \Vert_{\infty}^c \Vert \psi \Vert_{\infty}^{1 - c}  + (\log X)^{-C_2}\Vert \psi\Vert_{\infty}  \Big).\label{gen-avg-tau}
\end{align}
Here, $\wh{\psi}(\theta)  =  \sum_{x \in [X, 2X)} \psi(x) e(-\theta x)$. Assuming the claim for now, we see that the LHS of \cref{eq306e} is bounded above by
\[ \ll \frac{k}{X^c} \sum_i \Vert \wh{g_i} \Vert_{\infty}^c \Vert g_i \Vert_{\infty}^{1 - c} + (\log X)^{-C_2} \sum_i \Vert g_i \Vert_{\infty} + \mb{E}_{x \in [X, 2X)} |h(x)| \ll k Q^{-c/4}\] by \cref{small-h-ell1,sum-gi-gi-hat,gi-ell-infty}, assuming here that $C_2$ is sufficiently large and noting \cref{q-logx}. This contradicts \cref{eq306e}, recalling here that $Q = \lfloor\delta^{-C_2}\rfloor$, that $C_2$ is sufficiently large in terms of $C_1$, and additionally recalling here our assumption (in \cref{main-sec3}) that $k \le \delta^{-10}$. That is (assuming the claim \cref{gen-avg-tau}) we cannot have \cref{eq304a}, and therefore \cref{eq304b} holds.\vspace*{10pt}

\emph{Proof of claim \cref{gen-avg-tau}.} The first bound is trivial, but the second is a somewhat involved task. By homogeneity, we may assume that $\Vert \psi \Vert_{\infty} = 1$. Thus if the second bound in \cref{gen-avg-tau} does not hold, we have
\begin{equation}\label{gen-avg-tau-2}
\mb{E}_{p_1'\in \mc{P}_1',p'_2 \in \mc{P}'_2}\Big|\mb{E}_{x \in [X, 2X)}  \psi(x) \mb{E}_{n \in I} f_1\big( n p'_2 + \lambda x p'_1\big) \overline{f_1\big( n p'_1 + \lambda x p'_2\big)}\Big|\ge \tau/\tau_0,
\end{equation} where we are free to choose an absolute $\tau_0$ and $\tau := \frac{k}{X^c}\Vert \wh{\psi} \Vert_{\infty}^c  + (\log X)^{-C_2}$, thus in particular \begin{equation}\label{tau-interval} \tau \in [(\log X)^{-C_2},\tau_0].\end{equation} It therefore suffices to show that the assumption \cref{gen-avg-tau-2} and the inclusion \cref{tau-interval} imply that
\begin{equation}\label{large-psi-fourier}
\Vert \wh{\psi} \Vert_{\infty} = \sup_{\theta \in \R/\Z} |\wh{\psi}(\theta)| \ge (\tau/k)^{1/c}X ,
\end{equation}
since this immediately contradicts the definition of $\tau$. The remainder of the proof of claim \cref{gen-avg-tau} is devoted to this task.

Suppose that $\mc{P}'_{1} \subset [Y_1, 2Y_1]$ and $\mc{P}'_2 \subset [Y_2, 2Y_2]$, where $P'_1 \le Y_1, Y_2 \le P'_2$. Set $T_1 := \lfloor \tau^{C_1} X/Y_1 \rfloor$ and $T_2 := \lfloor \tau^{C_1} X/Y_2\rfloor$. Since $X \ge P_1 > (P'_2)^{10} \ge Y_i^{10}$ (by one of the assumptions of \cref{main-sec3}) and $\tau^{C_1} \ge (\log X)^{-C_1C_2}$ we have $T_1, T_2 \ge X^{1/2} \ge 1$. Let $t_1, t_2$ be integers with $|t_i| \le T_i$, and substitute $n := n' - \lambda p'_1 t_2 - \lambda p'_2 t_1$, $x := x' + p'_1 t_1 + p'_2 t_2$ in \cref{gen-avg-tau-2}. This gives
\begin{align}\nonumber
\mb{E}_{p_1'\in \mc{P}_1',p_2' \in \mc{P}_2'} &\Big|\mb{E}_{x' \in [X, 2X) -  p'_1 t_1 - p'_2 t_2}   \mb{E}_{n' \in I + \lambda (p'_1 t_2 +  p'_2 t_1)} f_1\big( n' p'_2 + \lambda x' p'_1 + \lambda(p_1'^2 - p_2'^2)t_1\big)  \\ & \times \overline{f_1\big( n' p'_1 + \lambda x' p'_2 + \lambda(p_2'^2 - p_1'^2)t_2\big)} \psi(x' +  p'_1 t_1 + p'_2 t_2)\Big|\ge \tau .\label{eq317}
\end{align}

Now observe that $|p'_1 t_1 + p'_2 t_2| \ll \tau^{C_1} X$, and also we have the crude bound 
\[ |\lambda (p'_1 t_2 +  p'_2 t_1)| \le |\lambda| P'_2 X \le e^{3 (\log N)^{1/4}} \ll \tau^{10} |I|,\] using here that all of $|\lambda|, P'_2, X$ are $\le e^{(\log N)^{1/4}}$, that $|I| \ge e^{(\log N)^{1/2}}$ and that $\tau \ge (\log X)^{-C_2} > (\log N)^{-C_2}$. It follows using \cref{avg-shifts} that for each fixed $p'_1, p'_2$ we may replace the $x'$-average in \cref{eq317} by $\mb{E}_{x \in [X, 2X)}$, and the $n'$-average by $\mb{E}_{n' \in I}$, at the cost of changing the inner sum in \cref{eq317} by $O(\tau^2)$. Doing this, averaging over $t_1, t_2$ and dropping the dashes on $x', n'$ for clarity, we obtain 
\begin{align*}
\mb{E}_{p_1'\in \mc{P}_1', p_2'\in \mc{P}_2'} \Big|&\mb{E}_{t_1\in [T_1], t_2 \in [T_2],x \in [X, 2X),n \in I}  f_1\big( n p'_2 + \lambda x p'_1 + \lambda(p_1'^2 - p_2'^2)t_1\big) \\ &\qquad\qquad \times \overline{f_1\big( n p'_1 + \lambda x p'_2 + \lambda(p_2'^2 - p_1'^2)t_2\big)}  \psi(x +  p'_1 t_1+ p'_2 t_2 )\Big|\ge \tau/2.
\end{align*}
Therefore there exists $n$ such that 
\begin{align*}
\mb{E}_{p_1'\in \mc{P}_1', p_2'\in \mc{P}_2'} \Big|&\mb{E}_{t_1\in [T_1], t_2 \in [T_2],x \in [X, 2X)}  f_1\big( n p'_2 + \lambda x p'_1 + \lambda(p_1'^2 - p_2'^2)t_1 \big) \\ &\qquad\qquad\qquad\times \overline{f_1\big( n p'_1 + \lambda x p'_2 + \lambda(p_2'^2 - p_1'^2)t_2\big)}  \psi(x +  p'_1 t_1 + p'_2 t_2 )\Big|\ge \tau/2.
\end{align*}
This implies that
\begin{align*}
\mb{E}_{p_1'\in \mc{P}_1', p_2'\in \mc{P}_2', t_1\in [T_1], t_2 \in [T_2], x \in [X, 2X)} F_1(p_1',p_2',x,t_1)F_2(p_1',p_2',x,t_2)\psi(x + p'_1 t_1 + p'_2 t_2 )\ge \tau/2 
\end{align*}
with $F_i$ being $1$-bounded functions; here we have absorbed the absolute value as a unit complex number into $F_1(p_1',p_2',x,t_1)$. We now apply Cauchy--Schwarz twice, and replace the dummy variable $x$ by $n$, to obtain that 
\[
\mb{E}_{p_1'\in \mc{P}_1',p_2'\in \mc{P}_2', t_1,t_1'\in [T_1], t_2, t'_2 \in [T_2], n \in [X, 2X)} \Delta_{p'_1 (t_1,t_1')}\Delta_{p_2'(t_2,t_2')}\psi(n)\ge (\tau/2)^4.
\]
(The notation used here is described in \cref{notation-sec}.) 

The expression on the LHS is the same as the one in \cref{lem26-assump-2}, with $S_i = \mc{P}'_i$. In order to apply \cref{lem:input-concat-2-iter}, we need the sets $\mc{P}'_1,\mc{P}'_2$ to have suitable diophantine properties. Such a statement is precisely the content of \cref{lem:crit-estimate}. To see this, recall that by definition we have $\mc{P}'_1 = \{ p_1 \cdots p_k : p_i \in I'_i\}$, where each interval $I'_i$ has the form $[M_i, (1 + \eta_i) M_i)$ for some $\eta_i \in (1/8k,1/4k)$ and some $M_i$, which we may assume to be the smallest prime $p_i$ in $I'_i$. Note that we always have $P'_1 \le M_i \le P'_2$, and $Y_1 \le M_1 \cdots M_k$ since $M_1 \cdots M_k \in \mc{P}'_1$ and $\mc{P}'_1 \subset [Y_1, 2Y_1]$. We now apply \cref{lem:crit-estimate} with $j = 1$. The required condition $\min_i M_i > Q^{L_1}$ in that lemma follows using \cref{p1p2k}. Thus \cref{lem:crit-estimate} gives that $\mc{P}'_1$ is $(L_1, k, Y_1)$-diophantine, for some absolute constant $L_1$. Similarly, $\mc{P}'_2$ is $(L_1, k, Y_2)$-diophantine.

We may now apply \cref{lem:input-concat-2-iter} with $S_i = \mc{P}'_i$ for $i = 1,2$, $\delta = (\tau/2)^4$, $L = L_1$, $L' = k$ and $D_i = Y_i$ for $i = 1,2$. To apply that lemma we need to verify, for $i = 1,2$, the three conditions $D_i, T_i \ge (L'/\delta)^{C_{\operatorname{\ref{lem:input-concat-2-iter}}}L^2}$ and $T_i D_i \le (L'/\delta)^{C_{\operatorname{\ref{lem:input-concat-2-iter}}}L^2}X$. The first condition holds comfortably using $Y_i \ge P'_1$ and the choice of parameters. The second condition holds even more comfortably using $T_i \ge X^{1/2} \ge P_1^{1/2}$ and the choice of parameters. Finally, the third condition holds using that $T_i D_i \asymp \tau^{C_1} X$, provided $C_1$ is large enough; larger than $4L_1C_{\operatorname{\ref{lem:input-concat-2-iter}}}$ is sufficient.

The conclusion of \cref{lem:input-concat-2-iter} gives that for any $H_1, H_2 \in \N$ with $H_i \le(\tau/k)^{2C_1} X$, there are $q_1,q_2 \in \N$, $q_i\le (k/\tau)^{O(1)}$ such that 
\begin{equation}\label{gp-prelim} \mb{E}_{n \in [2X], h_1, h'_1 \in [H_1], h_2, h'_2 \in [H_2]} \Delta_{q_1(h_1, h'_1)} \Delta_{q_2(h_2, h'_2)} \psi(n) \gg (\tau/k)^{O(1)}.\end{equation}
Set $H_1 = H_2 = H := \lfloor (\tau/k)^{2C_1} X\rfloor$.
The expression on the left in \cref{gp-prelim} is closely related to the Gowers $U^2$-norm of $\psi$ (or more accurately a Gowers--Peluse norm; see \cite{Pel20} where they are called ``Gowers box norms''). Rather than appeal to any general theory of such norms, we proceed with a direct analysis using the Fourier transform. By the Fourier expansion $\psi(n) = \int_{\R/\Z} \wh{\psi}(\theta) e(n\theta) d\theta$, \cref{gp-prelim} is 
\begin{align*} \int \wh{\psi}(\theta_1)\overline{\wh{\psi}(\theta_2)} & \overline{ \wh{\psi}(\theta_3)}\wh{\psi}(\theta_4) \wh{\mu_{[2X]}}(-\theta_1 + \theta_2 + \theta_3 - \theta_4) \wh{\mu_{[H]}}(q_1 (-\theta_1 + \theta_3))\wh{\mu_{[H]}}(q_1 (-\theta_2 + \theta_4)))\\ & \qquad\qquad \times \wh{\mu_{[H]}}(q_2(-\theta_1 + \theta_2)) \wh{\mu_{[H]}}(q_2(-\theta_3 + \theta_4))d\theta_1 d\theta_2 d\theta_3 d\theta_4 \gg (\tau/k)^{O(1)}.\end{align*} Here, $\mu_{[M]}$ denotes the normalised probability measure on $[M]$. By AM-GM and the pointwise bound $|\wh{\mu_{[2X]}}|\le 1$ we have that 
\begin{align*} \int \sum_{j=1}^4|\wh{\psi}(\theta_j)|^4& \Big|\wh{\mu_{[H]}}(q_1 (-\theta_1 + \theta_3))\wh{\mu_{[H]}}(q_1 (-\theta_2 + \theta_4)))\\ & \qquad\qquad \times \wh{\mu_{[H]}}(q_2(-\theta_1 + \theta_2))\wh{\mu_{[H]}}(q_2(-\theta_3 + \theta_4))\Big|d\theta_1 d\theta_2 d\theta_3 d\theta_4 \gg (\tau/k)^{O(1)}.\end{align*}
Substitute $\theta'_i = -\theta_i + t$, for $t \in \R/\Z$, and integrate over $t$. This gives (dropping the dashes) 
\begin{align}\nonumber \Big(4\int |\wh{\psi}(t)|^4~dt\Big)& \int  \Big|\wh{\mu_{[H]}}(q_1 (\theta_1 - \theta_3))\wh{\mu_{[H]}}(q_1 (\theta_2 - \theta_4)))\\ & \qquad\qquad \times \wh{\mu_{[H]}}(q_2(\theta_1 - \theta_2))\wh{\mu_{[H]}}(q_2(\theta_3 - \theta_4))\Big|d\theta_1 d\theta_2 d\theta_3 d\theta_4\gg (\tau/k)^{O(1)}.\label{fourier424-c}\end{align}
We claim that 
\begin{equation}\label{kernel-bd-claim} \int \Big|\wh{\mu_{[H]}}(q_1 (\theta_1 - \theta_3))\wh{\mu_{[H]}}(q_1 (\theta_2 - \theta_4)))\wh{\mu_{[H]}}(q_2(\theta_1 - \theta_2))\wh{\mu_{[H]}}(q_2(\theta_3 - \theta_4))\Big|d\theta_1 d\theta_2 d\theta_3 d\theta_4\ll  H^{-3}.
\end{equation}
By AM-GM and symmetry it suffices to prove that
\[ \int \Big|\wh{\mu_{[H]}}(q_1 (\theta_1 - \theta_3)) \wh{\mu_{[H]}}(q_1 (\theta_2 - \theta_4))) \wh{\mu_{[H]}}(q_2(\theta_1 - \theta_2))\Big|^{4/3} d\theta_1 d\theta_2 d\theta_3 d\theta_4\ll  H^{-3}.
\] The triple $(q_1(\theta_1 - \theta_3),  q_1(\theta_2 - \theta_4), q_2(\theta_1 - \theta_2))$ ranges uniformly over $(\R/\Z)^3$ as $(\theta_1,\theta_2,\theta_3,\theta_4)$ ranges over $(\R/\Z)^4$ and so it is enough to show that $\int|\wh{\mu_{[H]}}(\theta)|^{4/3} d\theta\ll  H^{-1}$. This, however, follows immediately using the bound $|\wh{\mu_{[H]}}(\theta)| \ll \min \big(1, H^{-1} \Vert \theta\Vert_{\R/\Z}^{-1}\big)$. 
The claim \cref{kernel-bd-claim} is therefore proven. From this and \cref{fourier424-c} we immediately have $\int |\wh{\psi}(t)|^4 d\theta \ge (\tau/k)^{O(1)} H^3 \gg (\tau/k)^{7C_1} X^3$ (if $C_1$ is sufficiently large).
Since $\int |\wh{\psi}(t)|^2 dt\ll X$ by Parseval, it follows that $\Vert \wh{\psi}\Vert_{\infty} \gg (\tau/k)^{7C_1/2} X$ and so $\Vert \wh{\psi}\Vert_{\infty} \ge (\tau/k)^{4C_1} X = (\tau/k)^{1/c}X$. In this last step we used the fact \cref{tau-interval} that $\tau \le \tau_0$; what is written is then true if $\tau_0$ is chosen sufficiently small. This completes the proof that the claims \cref{gen-avg-tau-2,tau-interval} imply \cref{large-psi-fourier}, and hence finishes the proof of the claim \cref{gen-avg-tau}.\vspace*{10pt}

As explained just before the statement of claim \cref{gen-avg-tau}, it now follows that \cref{eq304b} holds. The remainder of the proof of \cref{main-sec3} consists of the analysis of this case.\vspace*{10pt}

\emph{Analysis of \cref{eq304b}.} We first recall the statement, which is (after a mild reordering of the averaging operators)
\begin{equation}\label{eq304b-rpt} \Big| \mb{E}^{\log}_{p_1',p_2' \in \mc{P}'} \mb{E}_{h \in[X, 2X)} \Lambda_{\per}(h) \mb{E}_{n \in [N]}^{\log} f_1\big( n p'_2 + \lambda h p'_1\big) \overline{f_1\big( n p'_1 + \lambda h p'_2\big)}\Big|  \gg \delta^2.\end{equation}
The advantage of having the function $\Lambda_{\per}$ in place of $\tilde{\Lambda}$ is that the former is invariant under shifts by $Q!$. This is by construction (\cref{lem4point3}); recall here that $Q = \lfloor \delta^{-C_2}\rfloor$. For fixed $p'_1, p'_2$, in the inner average over $h$ and $n$ in \cref{eq304b-rpt} we substitute $n := n' - Q!\lambda p'_2 t$ and $h := h' + Q!p'_1 t$ for some $t \in \Z$ and then average over all $t \in [P_1^{1/2}]$. By the periodicity of $\Lambda_{\per}$ we obtain
\begin{align}\nonumber \Big| \mb{E}_{t \in [P_1^{1/2}]} \mb{E}^{\log}_{p_1',p_2' \in \mc{P}'}\mb{E}_{h' \in[X, 2X) - Q!p'_1 t} \Lambda_{\per}(h') \mb{E}_{n' \in [N] + Q! \lambda p'_2 t}^{\log} & f_1\big( n' p'_2 + \lambda h' p'_1+ \lambda t Q!(p^{\prime 2}_1 - p^{\prime 2}_2)\big) \\ & \times \overline{f_1\big( n' p'_1 + \lambda h' p'_2\big)}\Big|  \gg \delta^2.\label{eq343}\end{align}
Fix $t, p'_1, p'_2$. By \cref{log-avg-shift}, crude bounds for the parameters, and \cref{lam-per-props}, the error in replacing the average over $n'$ by $\mb{E}^{\log}_{n' \in [N]}$ is 
\[ \ll \frac{\log (Q! |\lambda| P'_2 P_1^{1/2})}{\log N}\mb{E}_{h'} |\Lambda_{\per}(h')| \ll (\log N)^{-1/2}\mb{E}_{h'} |\Lambda_{\per}(h') |\ll (\log N)^{-1/2}\log(\frac{1}{\delta})^{O(1)} \lll \delta^{10}, \] so we may make this replacement without affecting \cref{eq343}. Moreover, by applying \cref{ord-avg-shift} and the bound $\Vert \Lambda_{\per} \Vert_{\infty} \le Q^2$, the error in then replacing the average over $h'$ by $\mb{E}_{h' \in [X, 2X)}$ is $\ll Q^2 \cdot \frac{Q! P'_1 P_1^{1/2}}{X}  \ll P_1^{-1/4} \ll \delta^{10}$, so we may again make the replacement without affecting \cref{eq343}. (In the chain of inequalities here we used that $P'_1 \le P'_2 \le P_1^{1/10}$, that $X \ge P_1$ and that $P_1$ is much larger than fixed powers of $Q!$ and $\delta^{-1}$, cf. remarks in \cref{subsec41}.) Having made these two replacements we drop the dashes on $n',h'$ for clarity, thereby arriving at
\[ \Big| \mb{E}_{t \in [P_1^{1/2}]} \mb{E}^{\log}_{p'_1,p_2' \in \mc{P}'}  \mb{E}_{h \in  [X, 2X)} \Lambda_{\per}(h) \mb{E}_{n \in [N]}^{\log} f_1\big( n p'_2 + \lambda h p'_1 + \lambda t Q! (p^{\prime 2}_1 - p^{\prime 2}_2)\big) \overline{f_1\big( n p'_1 + \lambda h p'_2\big)}\Big|  \gg \delta^2.\]
By the triangle inequality, we obtain
\[  \mb{E}_{h \in  [X, 2X)} |\Lambda_{\per}(h) |\mb{E}_{n \in [N],p_1' ,p_2' \in \mc{P}'}^{\log}\Big| \mb{E}_{t \in [P_1^{1/2}]} f_1\big( n p'_2 + \lambda h p'_1 + \lambda t Q! (p^{\prime 2}_1 - p^{\prime 2}_2)\big)\Big|  \gg \delta^2.\]
Applying Cauchy--Schwarz, we obtain
\begin{align*} \mb{E}_{h \in  [X, 2X)} |\Lambda_{\per}(h)| \mb{E}_{t,t' \in [P_1^{1/2}]}  \mb{E}_{n \in [N], p_1',p'_2 \in \mc{P}'}^{\log}  f_1\big(&  n p'_2 + \lambda h p'_1  + \lambda t Q! (p^{\prime 2}_1 - p^{\prime 2}_2)\big)\\ &\times \overline{f_1\big( n p'_2 + \lambda h p'_1 + \lambda t' Q! (p^{\prime 2}_1 - p^{\prime 2}_2)\big)}  \gg \delta^4.\end{align*}
Using the pointwise bound $\mbf{1}_{(p'_1, p'_2) \ne 1} \le (p'_1, p'_2) - 1$ and the fact (\cref{coprimality-lem}) that $\gamma(\mc{P}') \le 
\delta^{4 + \eps_0/2}$, as well as the bound $\mb{E}_{h \in [X,2X)} |\Lambda_{\per}(h)| \ll \log^{O(1)}(1/\delta)$ (see \cref{lam-per-props}), we see that the contribution from pairs with $(p'_1, p'_2) \ne 1$ can be ignored. Thus
\begin{align*}   \mb{E}_{h \in [X, 2X)} |\Lambda_{\per}(h)| \mb{E}_{t,t' \in [P_1^{1/2}]}  \mb{E}_{n \in [N], p_1',p'_2 \in \mc{P}'}^{\log} \mbf{1}_{(p'_1, p'_2) = 1} & f_1\big( n p'_2 + \lambda h p'_1 + \lambda t Q! (p^{\prime 2}_1 - p^{\prime 2}_2)\big) \\ & \times \overline{f_1\big( n p'_2 + \lambda h p'_1 + \lambda t' Q! (p^{\prime 2}_1 - p^{\prime 2}_2)\big)}  \gg \delta^4.\end{align*}

Since $\Lambda_{\per}$ is invariant under shifts by $Q!$, we may introduce an additional average obtaining
\begin{align*} & \mb{E}_{h \in [X, 2X),h' \in [X'], t,t' \in [P_1^{1/2}]} |\Lambda_{\per}(h)|\mb{E}_{n \in [N], p_1',p'_2 \in \mc{P}'}^{\log} \mbf{1}_{(p'_1, p'_2) = 1}  \\ & \times f_1\big( n p'_2 + \lambda (h + Q!h') p'_1 + \lambda t Q! (p^{\prime 2}_1 - p^{\prime 2}_2)\big)  \overline{f_1\big( n p'_2 + \lambda (h + Q!h') p'_1 + \lambda t' Q! (p^{\prime 2}_1 - p^{\prime 2}_2)\big)} \gg \delta^4,\end{align*} where here $X' := \lfloor \delta^{5} X/Q!\rfloor$; note that $X'$ is much larger than 1 by the choice of parameters. Apart from the invariance of $\Lambda_{\per}$ under translation by $Q!$, the key point here is that, for each fixed $h'$, the shifted average differs from the original one by at most $X^{-1} \sum_{h \in [X, X + X'Q!]}|\Lambda_{\per}(h)| \ll \frac{X'Q!}{X} \log(1/\delta)^{O(1)}$ by \cref{lam-short-interval} (and a similar term corresponding to the edge effects near $2X$).

In the display above, consider the average over $n, h'$ (for fixed $h, t, t', p'_1, p'_2$). The point now is that, from the point of view of logarithmic averages, $n p'_2 + \lambda Q! h' p'_1$ may be regarded as essentially just varying over $[N]$. More precisely, applying \cref{frobenius-coin} with $q = p'_2$, $b = \lambda Q! p'_1$, $H := X' = \lfloor \delta^5 X/Q!\rfloor$ and $f(x) := f_1(x + \lambda h p'_1 + \lambda t Q!(p^{\prime 2}_1 - p^{\prime 2}_2))\overline{f_1(x + \lambda h p'_1 + \lambda t' Q!(p^{\prime 2}_1 - p^{\prime 2}_2))}$, we may replace the above with
\begin{align}\nonumber  \mb{E}_{h \in [X, 2X)} |\Lambda_{\per}(h)| \mb{E}_{t,t' \in [P_1^{1/2}]}  \mb{E}_{n \in [N], p_1',p'_2 \in \mc{P}'}^{\log} & \mbf{1}_{(p'_1, p'_2) = 1}  f_1\big( n + \lambda h p'_1 + \lambda t Q! (p^{\prime 2}_1 - p^{\prime 2}_2)\big)  \\ & \times \overline{f_1\big( n  + \lambda h p'_1 + \lambda t' Q! (p^{\prime 2}_1 - p^{\prime 2}_2)\big)}  \gg \delta^4.\label{eq4811}\end{align}
Let us comment on the application of \cref{frobenius-coin}. First, we used that $q = p'_2$ and $b = \lambda Q! p'_1$ are coprime. That $(p'_2,\lambda) = 1$  follows from the assumption that all prime factors of $\lambda$ are less than $P'_1$, and that $(p'_2, Q!) = 1$ follows using that $P'_1$ is much larger than $\delta^{-C_2}$. The error terms $O\big(\frac{\log q + \log bh}{\log N}\big)$ and $O\big(\frac{q}{H}\big)$ resulting from the application of \cref{frobenius-coin} are all $\ll \delta^{10}$ by simple verifications using the choice of parameters, the key point being that $H > P_1^{1/2}$ is much larger than $q$, but $bH < P_2^2$ is much smaller than $N$.

Applying \cref{log-avg-shift}, we may remove the $\lambda h p'_1$ shifts in \cref{eq4811}, allowing us to decouple the average over $h$ and thus obtain via another application of \cref{lam-per-props} that
\begin{equation}\label{eq339}  \mb{E}_{t,t' \in [P_1^{1/2}]} \mb{E}_{n \in [N], p_1',p'_2 \in \mc{P}'}^{\log} \mbf{1}_{(p'_1, p'_2) = 1} f_1\big( n + \lambda t Q! (p^{\prime 2}_1 - p^{\prime 2}_2)\big)\overline{f_1\big( n  + \lambda t' Q! (p^{\prime 2}_1 - p^{\prime 2}_2)\big)}  \gg \delta^{4 + \eps_0/4}.\end{equation}

We may remove the condition $(p'_1, p'_2) = 1$ (losing a further factor of 2 in the implicit constant) exactly as before, obtaining
\[  \mb{E}_{t,t' \in [P_1^{1/2}]}  \mb{E}_{n \in [N], p_1',p'_2 \in \mc{P}'}^{\log} f_1\big( n + \lambda t Q! (p^{\prime 2}_1 - p^{\prime 2}_2)\big)\overline{f_1\big( n  + \lambda t' Q! (p^{\prime 2}_1 - p^{\prime 2}_2)\big)}  \gg \delta^{4 + \eps_0/4} > \delta^5.\]
To analyse this, we will eventually use the diophantine nature of suitable sets $\{ (p')^2 : p' \in \mc{P}'\}$, applying \cref{lem:crit-estimate} in the case $j = 2$. To prepare the ground, we must again foliate into appropriate `subdyadic products' as we did in the analysis of \cref{eq304a} leading to \cref{eq306a}. With notation exactly the same as in that analysis, we may locate $\mc{P}'_1 := \mc{P}'_{\vec{j}}$ and $\mc{P}'_2 := \mc{P}'_{\vec{j}'}$ such that 
\[  \mb{E}_{t,t' \in [P_1^{1/2}]}  \mb{E}^{\log}_{n \in [N]} \mb{E}_{p_1' \in \mc{P}'_1, p'_2 \in \mc{P}'_2} f_1\big( n + \lambda t Q! (p^{\prime 2}_1 - p^{\prime 2}_2)\big)\overline{f_1\big( n  + \lambda t' Q! (p^{\prime 2}_1 - p^{\prime 2}_2)\big)}  \ge \delta^5.\] Note here that we were able to replace the logarithmic average over the $p'_i$ variables by a uniform average since these are now dyadically localised, and each $t,t'$-average is nonnegative. Suppose that $\mc{P}'_{1} \subset [Y_1, 2Y_1]$ and $\mc{P}'_2 \subset [Y_2, 2Y_2]$, where $P'_1 \le Y_1, Y_2 \le P'_2$. Without loss of generality, $Y_1 \ge Y_2$. Pigeonholing in $p'_2$, we see that there is some $p'_2$ such that 
\[  \mb{E}_{t,t' \in [P_1^{1/2}]}  \mb{E}_{n \in [N]}^{\log} \mb{E}_{p_1' \in \mc{P}'_1} f_1\big( n + \lambda t Q! (p^{\prime 2}_1 - p^{\prime 2}_2)\big)\overline{f_1\big( n  + \lambda t' Q! (p^{\prime 2}_1 - p^{\prime 2}_2)\big)}  \ge \delta^5.\]

By \cref{residue-split} with modulus $q = \lambda Q!$, this gives
\[ \mb{E}_{a \in \{0,1,\dots, \lambda Q! - 1\}} \mb{E}^{\log}_{n \in [N]}\mb{E}_{p_1' \in \mc{P}'_1, t,t' \in [P_1^{1/2}]}f_{1,a}\big( n +  t (p^{\prime 2}_1 - p^{\prime 2}_2)\big)\overline{f_{1,a}\big( n  + t' (p^{\prime 2}_1 - p^{\prime 2}_2)\big)} \ge \delta^6 \]
 where $f_{1,a}(n) := f_1(\lambda Q! n + a)$.   
 For each fixed $a$, the inner average is of the form \cref{lem26-assump}, with $S := \{ p^{\prime 2}_1 - p^{\prime 2}_2 : p'_1 \in \mc{P}'_1\}$ and $\delta$ replaced by $\delta^6$. We showed in \cref{lem:crit-estimate} (with $j = 2$) that $S + p^{\prime 2}_2 = \{ p^{\prime 2}_1 : p'_1 \in \mc{P}'_1\}$ is $(L_2, k, Y_1^2)$-Diophantine (the condition $\min_i M_i > Q^{L_2}$ in that lemma follows using \cref{p1p2k}), and so by translation invariance of the notion of diophantine, the same is true of $S$. Observe that $S \subset [-4Y_1^2, 4Y_1^2]$. Thus we may aim to apply \cref{lem:input-concat} with $S = \{ p_1^{\prime 2} - p_2^{\prime 2} : p'_1 \in \mc{P}'_1\}$, $T := \lfloor P_1^{1/2}\rfloor$, $(L, L', D) = (L_2, k, Y_1^2)$, and $\delta$ replaced by $\delta^6$. There are three conditions to be checked, namely that $D, T \ge (L'/\delta^6)^{8L} = (k/\delta^6)^{8L_2}$, and that $\frac{\log TD}{\log N} \le (\delta^6/k)^{50L_2}$. 

 The first condition, involving $D = Y_1^2$, is immediate from $Y_1 \ge P'_1$ and the parameter hierarchy. The second condition, involving $T = \lfloor P_1^{1/2}\rfloor$, is also immediate. For the third condition note that $T Y_1^2 \ge T \ge P_1^{1/2}$ and $(\delta^6/k)^{50L_2}$ is much smaller than $P_1^{1/4}$.
 
 Thus the appeal to \cref{lem:input-concat} is indeed valid, and we are free to take any $H = P_1^{1/4}$ in this application.
Recalling that $k \le \delta^{-10}$, the conclusion of \cref{lem:input-concat} that for each $a$ there is $q_a \le (k/\delta)^{O(1)} \le \delta^{-O(1)}$ such that $\Vert f_{1,a} \Vert_{U^1_{\log}[N; q_a, P_1^{1/4}]} \gg \delta^{O(1)}$.
 By pigeonhole there is a set of $\ge \delta^{O(1)} \lambda Q!$ values of $a$ such that $q_a$ does not depend on $a$. Denote this common value by $q$ (which is of course not the same quantity as in the application of \cref{residue-split} above). It follows that 
 \[ \mb{E}_{a \in \{0,1,\dots, \lambda Q! - 1\}}\Vert f_{1,a} \Vert^2_{U^1_{\log}[N; q_a, H]} \gg \delta^{O(1)} ,\] that is to say
\[  \mb{E}_{a \in \{0,1,\dots, \lambda Q! - 1\}} \mb{E}_{n \in [N]}^{\log}\mb{E}_{h,h' \in [P_1^{1/4}]}f_{1,a}(n + qh)\overline{f_{1,a}(n + qh')} \ge \delta^{O(1)}.\]
A further application of \cref{residue-split} then yields 
\[ \mb{E}_{n \in [N]}^{\log}\mb{E}_{h,h' \in [P_1^{1/4}]} f_1(n + \lambda h qQ!) \overline{f_1(n + \lambda h' q Q!)} \gg \delta^{O(1)},\] which is the statement
\[ \Vert f_1 \Vert^2_{U^1_{\log}[N; \lambda q Q!, P_1^{1/4}]} \gg \delta^{O(1)}.\]
Finally, let $H \le P_1^{1/8}$ be as in the statement of \cref{main-sec3}. Set $C := 2C_2$ and $V := \lfloor \delta^{-C} \rfloor !$. Note that $q Q! \mid V$. Therefore by \cref{gp-compar} we have 
\[ \Vert f_1 \Vert_{U^1_{\log}[N; \lambda V; H]} \ge \Vert f_1 \Vert_{U^1_{\log}[N; \lambda q Q!, P_1^{1/4}]} - O\Big(\frac{\log |P_1^{1/4} \lambda q Q!|}{\log N}\Big) - O\Big(\frac{H V}{P_1^{1/4} q Q!}\Big) \gg \delta^{O(1)},\]
where the error terms can be estimated crudely bearing in mind the comments in \cref{subsec41} (essentially, $P_1$ is much smaller than $N$ but much larger than all other variables).
This concludes the proof of \cref{main-sec3}.
\end{proof}

\section{Averaging projections and orthogonality}

In the introduction we discussed certain `projection' operators $\Pi^{\sml},\Pi^{\lrg}$. In this section we introduce the general class of such operators and establish some of their basic properties.

\begin{definition} \label{proj-def} Let $f : \Z \rightarrow \C$ be a function. Suppose that $q, H \in \N$. Then we define 
\[\Pi_{q,H}f(n) := \mb{E}_{h,h'\in [H]}f(n + q(h - h')).\]
\end{definition}

Whilst we informally think of these maps as projections, this is not quite accurate as $\Pi_{q,H} \Pi_{q,H}f\neq \Pi_{q,H} f$. The first observation we require is that $\Pi_{q,H} f$ has an almost periodicity property.

\begin{lemma}\label{almost-per-proj}
Let $f : \Z \rightarrow \C$ be a 1-bounded function. Let $q, H \in \N$. Then, for any $h$ we have
\[ \Pi_{q, H} f(n + qh) =  \Pi_{q,H}f(n) + O(\frac{|h|}{H}).\]
\end{lemma}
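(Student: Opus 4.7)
The plan is to carry out a direct comparison via a change of variables in the inner sum. Writing out the definition,
\[ \Pi_{q,H}f(n+qh) = \mb{E}_{h_1, h_2 \in [H]} f\bigl(n + q(h_1 + h - h_2)\bigr), \]
and I would substitute $h'_1 := h_1 + h$ in the first averaging variable. This rewrites the right-hand side as
\[ \mb{E}_{h'_1 \in h + [H],\, h_2 \in [H]} f\bigl(n + q(h'_1 - h_2)\bigr), \]
while $\Pi_{q,H} f(n)$ is the same expression but with $h'_1$ averaged over $[H]$ in place of $h + [H]$.

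The main (and essentially only) step is therefore to bound the discrepancy between averaging over $[H]$ and over the shifted interval $h + [H]$. Since the symmetric difference $[H] \triangle (h + [H])$ has size at most $2|h|$, and $f$ is $1$-bounded, each term differs by at most $1$ and so the difference of the two averages (with $h_2$ fixed) is $O(|h|/H)$. Averaging over $h_2 \in [H]$ preserves this bound, yielding the claimed estimate.

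There is no serious obstacle here; the only mild subtlety is notational (the dummy variable $h$ in the statement clashes with the averaging variables $h, h'$ in \cref{proj-def}, so I would first rename the inner averaging variables to $h_1, h_2$ to avoid confusion). The proof is a single line once the change of variables and symmetric difference bound are noted.
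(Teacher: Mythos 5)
Your argument is correct and matches the paper's proof: the paper likewise expands $\Pi_{q,H}f(n+qh)$ as $\mb{E}_{h_1,h'_1\in[H]}f(n+q(h+h_1-h'_1))$ and then invokes \cref{ord-avg-shift}, whose proof is exactly your symmetric-difference bound on the shifted interval of averaging. No issues.
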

\begin{proof}
 The LHS may be expanded as $\mb{E}_{h_1, h'_1 \in [H]} f ( n + q(h + h_1 - h'_1))$. The result then follows from \cref{ord-avg-shift}.
\end{proof}

A crucial feature of the maps $\Pi_{q,H}$ is that they essentially preserve the $U^1_{\log}$-norms (see \cref{gp-local-def}). Indeed we have the following lemma.
\begin{lemma}\label{lem:proj-check}
Let $q\in \N$ and $H'\le H$. Then for $f:\Z\to \C$ which is $1$-bounded, we have that $\Vert \Pi_{q,H'}f  - f \Vert_{U^1_{\log}[N; q, H]} \ll H'/H$.
\end{lemma}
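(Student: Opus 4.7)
The plan is to establish a pointwise bound on $\bigl|\mb{E}_{h \in [H]} (\Pi_{q,H'}f - f)(n+hq)\bigr|$ that is uniform in $n$, and then to square and take the logarithmic average.

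First I would unfold the definition of $\Pi_{q,H'}$ inside the average and swap the order of summation:
\[ \mb{E}_{h \in [H]} \Pi_{q,H'} f(n+hq) = \mb{E}_{h_1,h_1' \in [H']} \mb{E}_{h \in [H]} f\bigl(n + q(h + h_1 - h_1')\bigr). \]
For any fixed $h_1, h_1' \in [H']$, the inner average is a translate by $q(h_1-h_1')$ of $\mb{E}_{h \in [H]} f(n+hq)$. Since $|h_1 - h_1'| \le H'$, the elementary shift estimate (of the type used in \cref{ord-avg-shift}: replacing $[H]$ by its translate introduces at most $2|h_1-h_1'|$ extra terms, each $1$-bounded) gives
\[ \Bigl| \mb{E}_{h \in [H]} f\bigl(n + q(h + h_1 - h_1')\bigr) - \mb{E}_{h \in [H]} f(n+hq) \Bigr| \ll \frac{H'}{H}, \]
uniformly in $n$, $h_1$, $h_1'$.

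Averaging over $h_1, h_1' \in [H']$ and subtracting $\mb{E}_{h \in [H]} f(n+hq)$ from both sides then yields the pointwise bound
\[ \Bigl| \mb{E}_{h \in [H]} \bigl( \Pi_{q,H'} f - f\bigr)(n+hq) \Bigr| \ll \frac{H'}{H}. \]
Squaring this, taking the logarithmic expectation over $n \in [N]$ (which preserves the bound since the estimate is uniform in $n$), and then taking square roots gives
\[ \Vert \Pi_{q,H'} f - f \Vert_{U^1_{\log}[N; q, H]} \ll \frac{H'}{H}, \]
which is the required conclusion.

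There is no significant obstacle here: the only subtle point is bookkeeping in the shift step, which reduces to the fact that two arithmetic progressions of length $H$ whose starting points differ by at most $H'$ have symmetric difference of size at most $2H'$, contributing $O(H'/H)$ after normalisation by $1/H$. The rest is just an application of Cauchy--Schwarz (implicit in going from the pointwise bound to the squared logarithmic average) and the $1$-boundedness of $f$.
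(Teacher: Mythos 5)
Your argument is correct and is essentially identical to the paper's proof: both unfold $\Pi_{q,H'}$ inside the $h$-average, apply the elementary shift estimate \cref{ord-avg-shift} to obtain the uniform pointwise bound $\mb{E}_{h \in [H]}\Pi_{q,H'}f(n+hq) = \mb{E}_{h\in[H]}f(n+hq) + O(H'/H)$, and then substitute into the definition of the $U^1_{\log}$-norm. No gaps.
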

\begin{proof}
First recall that by definition \cref{gp-local-def} we have
\begin{equation}\label{norm-u1-def} \Vert g \Vert_{U^1_{\log}[N;q, H]}^2 = \mb{E}_{n \in [N]}^{\log}\big|\mb{E}_{h \in [H]}g(n + hq)\big|^2.\end{equation}
Note that by \cref{ord-avg-shift} we have 
\[\mb{E}_{h \in [H]}\Pi_{q,H'}f(n + hq) =  \mb{E}_{h \in [H], h'_1, h'_2 \in [H'] }f(n + q(h+h'_1-h'_2)) = \mb{E}_{h\in [H]}f(n + hq) + O\Big(\frac{H'}{H}\Big).\]
The desired result follows immediately upon taking $g = f - \Pi_{q,H'} f$ in \cref{norm-u1-def}.
\end{proof}

We next require an approximate Pythagoras relation for projections $\Pi_{H,q}, \Pi_{H', q'}$.

\begin{lemma}\label{lem:tel-help} Let $q, q', H, H'$ be parameters with $q \mid q'$ and $H' \le H$. Let $f : \Z \rightarrow \C$ be a $1$-bounded function.
We have that 
\[\mb{E}_{n \in [N]}^{\log}\big|\Pi_{q',H'}f(n) - \Pi_{q,H}f(n)\big|^2 \le \mb{E}_{n \in [N]}^{\log}\big|\Pi_{q',H'}f(n)\big|^2 - \mb{E}_{n \in [N]}^{\log}\big|\Pi_{q,H}f(n)\big|^2 + O\Big(\frac{\log q'H}{\log N} + \frac{q'H'}{qH}\Big).\]
\end{lemma}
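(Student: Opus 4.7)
Set $g := \Pi_{q,H}f$ and $\tilde g := \Pi_{q',H'}f$, expand $|\tilde g - g|^2 = |\tilde g|^2 + |g|^2 - 2\mathrm{Re}(\tilde g\bar g)$, and note that the desired inequality is equivalent (up to an absorbed factor of $2$ in the implicit constant) to
\[
\mathrm{Re}\,\mb{E}^{\log}_{n\in[N]}\tilde g(n)\overline{g(n)} \;\ge\; \mb{E}^{\log}_{n\in[N]}|g(n)|^2 - O\!\left(\tfrac{\log q'H}{\log N}+\tfrac{q'H'}{qH}\right).
\]

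The first step replaces $\tilde g$ by $f$ on the left. Crucially, $q\mid q'$, so any shift by $q'(h'-k')$ is $q$ times an integer of magnitude $\le q'H'/q$. Opening $\tilde g=\Pi_{q',H'}f$ by its definition, applying \cref{log-avg-shift} to shift $n\to n-q'(h'-k')$ (error $O(\log q'H'/\log N)$), and then using \cref{almost-per-proj} to replace $\bar g(n-q'(h'-k'))$ by $\bar g(n)+O(q'H'/(qH))$, one obtains
\[
\mb{E}^{\log}\tilde g\bar g \;=\; \mb{E}^{\log}f\bar g + O\!\left(\tfrac{q'H'}{qH}+\tfrac{\log q'H'}{\log N}\right).
\]
It therefore suffices to show $\mathrm{Re}\,\mb{E}^{\log}f\bar g \ge \mb{E}^{\log}|g|^2 - O(\log qH/\log N)$.

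Introduce the logarithmic autocorrelation $\gamma(m):=\mb{E}^{\log}_n f(n)\overline{f(n+qm)}$. Expanding $g=\Pi_{q,H}f$ and $|g|^2$ via definitions, and applying \cref{log-avg-shift} to align dummy variables (with shifts up to $qH$), one obtains
\[
\mb{E}^{\log}f\bar g = \sum_m\phi_H(m)\gamma(m),\qquad \mb{E}^{\log}|g|^2 = \sum_m\psi_H(m)\gamma(m)+O\!\left(\tfrac{\log qH}{\log N}\right),
\]
where $\phi_H(m):=\mb{P}_{h,k\in[H]}[h-k=m]$ is the triangular density on $[-H+1,H-1]$ and $\psi_H:=\phi_H*\phi_H$. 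Setting $\rho:=\phi_H-\psi_H$, the problem reduces to the positivity statement $\mathrm{Re}\sum_m\rho(m)\gamma(m)\ge -O(\log qH/\log N)$.

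This last positivity step is the main obstacle. The plan rests on two ingredients: first, $\rho$ is positive definite, since its Fourier transform $\hat\rho=\hat\phi_H(1-\hat\phi_H)\ge 0$ (using $\hat\phi_H=|\hat\mu_H|^2\in[0,1]$), and therefore admits a factorization $\rho=\lambda*\widetilde\lambda$ with $\widetilde\lambda(n):=\overline{\lambda(-n)}$; second, $\gamma$ is itself approximately positive definite, in that for any finitely-supported sequence $(c_m)$ of diameter $M$, term-by-term application of \cref{log-avg-shift} yields
\[
\sum_{m,m'}c_m\overline{c_{m'}}\gamma(m-m') = \mb{E}^{\log}_n\Bigl|\sum_m c_m f(n+qm)\Bigr|^2 + O\!\left(\tfrac{\|c\|_1^2\log(qM)}{\log N}\right) \ge -O\!\left(\tfrac{\|c\|_1^2\log(qM)}{\log N}\right).
\]
Taking $(c_m)=\lambda(m)$ and using $\|\lambda\|_2^2=\int\hat\rho=\rho(0)=O(1/H)$ together with an effective support of diameter $O(H)$ (whence $\|\lambda\|_1=O(1)$ by Cauchy--Schwarz) gives $\sum_m\rho(m)\gamma(m)\ge -O(\log qH/\log N)$, completing the argument. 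The main technical difficulty is the quantitative factorization: a natural choice takes $\hat\lambda=\hat\mu_H\sqrt{1-|\hat\mu_H|^2}$ and then truncates $\lambda$ to an interval of length $O(H)$, and one must verify that the truncation does not spoil the nonnegativity in the positivity calculation beyond the claimed error.
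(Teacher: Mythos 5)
Your reduction is sound and, up to that point, follows the paper's route: expanding $|\tilde g - g|^2$ reduces the lemma to the lower bound $\mathrm{Re}\,\mb{E}^{\log}_{n}\tilde g(n)\overline{g(n)} \ge \mb{E}^{\log}_n|g(n)|^2 - O(\cdots)$, and your replacement of $\tilde g$ by $f$ in the cross term — using $q \mid q'$ to write $q'(h'-k') = qm$ with $|m| \le q'H'/q$ and then invoking \cref{log-avg-shift} and \cref{almost-per-proj} — is exactly where the two error terms $\frac{q'H'}{qH}$ and $\frac{\log q'H}{\log N}$ come from in the paper as well. The problem is the final positivity step $\mathrm{Re}\sum_m \rho(m)\gamma(m) \ge -O(\log qH/\log N)$ with $\rho = \phi_H - \phi_H*\phi_H$. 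As written this is a genuine gap, and you flag it yourself: the argument needs a factorization $\rho = \lambda * \widetilde{\lambda}$ together with quantitative control $\Vert\lambda\Vert_1 = O(1)$ and an effective support of diameter $O(H)$. Neither is automatic. The natural $\lambda$ defined by $\widehat{\lambda} = \widehat{\mu_H}\sqrt{1-|\widehat{\mu_H}|^2}$ is supported on all of $\Z$ (the square root destroys the compact support of $\mu_H * \tilde\mu_H$), its $\ell^1$ norm is not controlled by the $\ell^2$ bound $\Vert\lambda\Vert_2^2 = \rho(0) = O(1/H)$ without a support bound, and truncating $\lambda$ destroys the exact identity $\rho = \lambda*\widetilde\lambda$, so the truncation error re-enters the quadratic form paired against $\gamma$ and must itself be bounded in $\ell^1$. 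None of this is carried out, and it is the entire content of the step.

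The gap is also unnecessary, because the positivity you need is a single Cauchy--Schwarz in disguise. Note that $\sum_m\phi_H(m)\gamma(m) = \mb{E}^{\log}_n f(n)\overline{\mb{E}_{h,k\in[H]}f(n+q(h-k))}$, which after shifting $n \mapsto n+qk$ via \cref{log-avg-shift} equals $\mb{E}^{\log}_n\big|\mb{E}_{h\in[H]}f(n+qh)\big|^2 + O(\frac{\log qH}{\log N})$. Then introduce an extra average over $h' \in [H]$ by \cref{log-avg-shift} again and apply Cauchy--Schwarz (Jensen) to pull the $h'$-average inside the square:
\[ \mb{E}^{\log}_{n}\big|\mb{E}_{h\in[H]}f(n+qh)\big|^2 = \mb{E}_{h'\in[H]}\mb{E}^{\log}_{n}\big|\mb{E}_{h\in[H]}f(n+q(h-h'))\big|^2 + O\Big(\tfrac{\log qH}{\log N}\Big) \ge \mb{E}^{\log}_n\big|\Pi_{q,H}f(n)\big|^2 + O\Big(\tfrac{\log qH}{\log N}\Big), \]
and the right-hand side is $\sum_m\psi_H(m)\gamma(m) + O(\frac{\log qH}{\log N})$ by your own identity. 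This two-line substitute for the factorization argument is what the paper does, and with it your proof closes.
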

\begin{proof} For brevity we write $\langle g_1, g_2 \rangle := \mb{E}_{n \in [N]}^{\log}g_1(n) \overline{g_2(n)}$ and $\Vert g \Vert^2 := \langle g, g\rangle = \mb{E}_{n \in [N]}^{\log} |g(n)|^2$.

We first expand the LHS as
\begin{equation}\label{52-lhs-exp} \Vert \Pi_{q',H'}f\Vert^2 + \Vert\Pi_{q,H}f\Vert^2- \langle \Pi_{q',H'}f, \Pi_{q,H}f\rangle - \overline{\langle \Pi_{q',H'}f, \Pi_{q,H}f\rangle }  .
\end{equation}
Expanding the definitions, we have \[
\langle \Pi_{q',H'} f, \Pi_{q,H} f\rangle  = \mb{E}_{n \in [N]}^{\log} \mb{E}_{h_1, h_2 \in [H], h'_1, h'_2 \in [H'] } f(n + q'(h'_1 - h'_2)) \overline{f(n + q(h_1 - h_2))}.
\]
Substitute $n = n' + qh_2 - q'h'_1$; then, dropping the dash on $n'$, we see from \cref{log-avg-shift} that this is 
\[ \mb{E}_{n \in [N]}^{\log} \mb{E}_{h_1, h_2 \in [H], h'_1, h'_2 \in [H'] } f(n + qh_2 - q'h'_2) \overline{f(n + qh_1 - q'h'_1))} + O\Big(\frac{\log q' H}{\log N}\Big),
\] which equals
\[ \mb{E}_{n \in [N]}^{\log} \big| \mb{E}_{h \in [H], h'\in [H'] } f(n + qh-q'h')\big|^2  + O\Big(\frac{\log q' H}{\log N}\Big).
\] 
Now by \cref{ord-avg-shift} (using here that $q \mid q'$) we have
\[ \mb{E}_{h \in [H], h'\in [H'] } f(n + q h-q'h') = \mb{E}_{h \in [H]} f(n + qh) + O\Big(\frac{q'H'}{qH}\Big).\]
Therefore, putting these observations together we obtain
\[ \langle \Pi_{q',H'} f, \Pi_{q,H} f\rangle 
  =\mb{E}_{n \in [N]}^{\log} \big|\mb{E}_{h \in [H]} f(n + qh)\big|^2 + O\Big(\frac{q'H'}{qH} + \frac{\log q' H}{\log N}\Big). \]
Taking complex conjugates and adding, we obtain
\[ \langle \Pi_{q',H'} f, \Pi_{q,H} f\rangle + \overline{\langle \Pi_{q',H'} f, \Pi_{q,H} f\rangle}
=2\mb{E}_{n \in [N]}^{\log} \big|\mb{E}_{h \in [H]} f(n + qh)\big|^2 + O\Big(\frac{q'H'}{qH} + \frac{\log q' H}{\log N}\Big). \]
Now by a further application of \cref{log-avg-shift}, 
\[ \mb{E}_{n \in [N]}^{\log} \big|\mb{E}_{h \in [H]} f(n + qh)\big|^2  = \mb{E}_{h' \in [H]}\mb{E}_{n \in [N]}^{\log} \big|\mb{E}_{h \in [H]} f(n + q(h - h'))\big|^2 + O\Big(\frac{\log qH}{\log N}\Big), \] and by Cauchy--Schwarz this is at least
\[ \mb{E}_{n \in [N]}^{\log}\big| \mb{E}_{h, h' \in [H]} f(n + q(h - h'))\big|^2 + O(\frac{\log qH}{\log N}) = \Vert \Pi_{q,H} f\Vert^2 + O\Big(\frac{\log qH}{\log N}\Big). \]
It follows that
\[ \langle \Pi_{q',H'} f, \Pi_{q,H} f\rangle + \overline{\langle \Pi_{q',H'} f, \Pi_{q,H} f\rangle} \ge 2\Vert  \Pi_{q,H} f\Vert^2  + O\Big(\frac{q'H'}{qH} + \frac{\log q' H}{\log N}\Big). \] Substituting in to \cref{52-lhs-exp} gives the lemma.\end{proof}

We now give the `maximal function' argument which was hinted at in the introduction where we explained how to move from \cref{ramsey-outcome} to \cref{ramsey-outcome-2}.

\begin{lemma}\label{maximal}
Let $f ,g : \N \rightarrow \C$ be non-negative $1$-bounded functions. Let $\delta \in (0,\frac{1}{2})$ and let $H,q$ be positive integer parameters with $\frac{\log Hq}{\log N} < c\delta^2$.  Suppose that $\mb{E}_{n \in [N]}^{\log} f(n) g(n) \ge \delta$. Then $\mb{E}_{n \in [N]}^{\log} (\Pi_{q,H} f)(n) g(n) \ge \delta^2/8$.
\end{lemma}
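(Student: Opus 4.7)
The plan is to exploit the $TT^\ast$-type structure of $\Pi_{q,H}$ together with a pointwise Cauchy--Schwarz that takes advantage of the non-negativity and $1$-boundedness of $f,g$. Set $\tilde f(n) := \mb{E}_{h\in [H]} f(n+qh)$ and $\tilde g(n) := \mb{E}_{h\in [H]} g(n+qh)$, so that $\Pi_{q,H} f(n) = \mb{E}_{h'\in [H]} \tilde f(n-qh')$. The first step is to move the outer shift over to $g$ via two applications of \cref{log-avg-shift}, yielding the identity
\[
\mb{E}_n^{\log} \Pi_{q,H} f(n)\, g(n) \;=\; \mb{E}_n^{\log} \tilde f(n)\, \tilde g(n) + O\bigl(\tfrac{\log qH}{\log N}\bigr).
\]
This reduces the problem to showing $\mb{E}_n^{\log} \tilde f\, \tilde g \gg \delta^2$.

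For this, I would start from the hypothesis $\mb{E}_n^{\log} f(n) g(n) \ge \delta$, insert an average over shifts by $qh$ with $h\in [H]$ (an operation costing $O(\log qH/\log N)$ by \cref{log-avg-shift}), and then apply Cauchy--Schwarz pointwise in $n$ to the resulting $h$-average. Using $f^2 \le f$ and $g^2 \le g$, which is where the non-negativity and $1$-boundedness enter, this yields the pointwise estimate
\[
\mb{E}_{h\in [H]} f(n+qh) g(n+qh) \;\le\; \tilde f(n)^{1/2}\, \tilde g(n)^{1/2},
\]
and hence $\mb{E}_n^{\log} \tilde f^{1/2}\tilde g^{1/2} \ge \delta - O(\log qH/\log N)$. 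A second Cauchy--Schwarz, now on the logarithmic average in $n$, upgrades this after squaring to $\mb{E}_n^{\log} \tilde f\, \tilde g \ge \bigl(\delta - O(\log qH/\log N)\bigr)^2$.

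Combining with the first identity and invoking the hypothesis $\log qH/\log N < c\delta^2$ for a sufficiently small absolute constant $c$, the accumulated error is of size $O(c\delta^2)$ and can be absorbed into the main term, giving $\mb{E}_n^{\log} \Pi_{q,H} f(n) g(n) \ge (1-O(c))\delta^2 \ge \delta^2/8$. The argument presents no real obstacle; the only point requiring some care is the accounting of the three separate applications of \cref{log-avg-shift}, to ensure that the $c\delta^2$ slack in the hypothesis comfortably absorbs their combined contribution.
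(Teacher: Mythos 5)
Your argument is correct, but it takes a genuinely different route from the paper. The paper's proof is a thresholding/exceptional-set argument: it sets $\eps = \delta/4$, restricts to the set where $\Pi_{q,H}f(n) > \eps$ via the indicator $h(n) := 1_{\Pi f(n) > \eps}$, uses the pointwise chain $(\Pi f)g \ge (\Pi f) g \cdot fh \ge \eps\, fgh$, and then shows that the exceptional set contributes little by proving $\mb{E}^{\log}_n F \le 2\eps$ for $F := f\,1_{\Pi f \le \eps}$ (via $\big(\mb{E}^{\log} F\big)^2 \le \mb{E}^{\log} F\,(\Pi F) + O(\cdot) \le \eps\,\mb{E}^{\log} F + \eps^2$). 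You instead symmetrise the bilinear form, rewriting $\mb{E}^{\log}_n \Pi f \cdot g$ as $\mb{E}^{\log}_n \tilde f\,\tilde g + O(\log qH/\log N)$ with $\tilde f(n) = \mb{E}_{h\in[H]} f(n+qh)$ and likewise for $g$, and then run two Cauchy--Schwarz steps through the pointwise bounds $f^2\le f$, $g^2\le g$. Both arguments use exactly the same hypotheses (non-negativity, $1$-boundedness, and \cref{log-avg-shift} to absorb the shift errors via the $c\delta^2$ slack), and yours in fact yields the marginally stronger conclusion $(1-O(c))\delta^2$ in place of $\delta^2/8$, at the price of averaging $g$ as well; the paper's version isolates the role of $\Pi f$ as a maximal-type average of $f$ alone, which is closer in spirit to how the lemma is used later (cf.\ the corollary \cref{avg-max}). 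Your bookkeeping of the three applications of \cref{log-avg-shift} is sound: each costs $O(\log qH/\log N) = O(c\delta^2)$, and after squaring the intermediate bound $\delta - O(c\delta^2)$ the total loss is $O(c)\delta^2$, comfortably absorbed for $c$ a sufficiently small absolute constant.
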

\begin{proof}
Write $\Pi = \Pi_{q,H}$ for brevity. Set $\eps := \delta/4$ and denote $h(n) := 1_{\Pi f(n) > \eps}$. Then since $0 \le f h \le 1$ and $(\Pi f) h \ge \eps h$ pointwise we have
\[ \mb{E}_{n \in [N]}^{\log} (\Pi f)(n) g(n) \ge \mb{E}_{n \in [N]}^{\log} f(n) (\Pi f)(n) g(n) h(n) \ge \eps \mb{E}_{n \in [N]}^{\log} f(n) h(n) g(n) .\]
Therefore we are done if we can show that $\mb{E}_{n \in [N]}^{\log} f(n) (1 - h(n)) \le \delta/2$, that is to say
\begin{equation}\label{want-55} \mb{E}_{n \in [N]}^{\log} f(n) 1_{\Pi f(n) \le \eps} \le \delta/2.\end{equation}
Write $F(n) := f(n) 1_{\Pi f(n) \le \eps}$. Since $F \le f$ pointwise, we have $\Pi F \le \Pi f$ pointwise, and so if $F(n) \ne 0$ then we have $\Pi F(n) \le \Pi f(n) \le \eps$. It follows that using \cref{log-avg-shift} and Cauchy--Schwarz that 
\begin{align*}
\big| \mb{E}_{n \in [N]}^{\log} F(n) \big|^2 & =  \big| \mb{E}_{n \in [N]}^{\log} \mb{E}_{h \in [H]} F(n + hq) \big|^2 + O\Big(\frac{\log Hq}{\log N}\Big) \\ & \le \mb{E}_{n \in [N]}^{\log} \big|\mb{E}_{h \in [H]} F(n + hq)\big|^2 +  O\Big(\frac{\log Hq}{\log N}\Big)\\ & = \mb{E}_{n \in [N]}^{\log} \mb{E}_{h, h' \in [H]} F(n + hq) F(n + h'q)  + O\Big(\frac{\log Hq}{\log N}\Big)\\ & \le \mb{E}_{n \in [N]}^{\log} \mb{E}_{h, h' \in [H]} F(n) F(n + (h - h')q) + O\Big(\frac{\log Hq}{\log N}\Big) \\ & = \mb{E}^{\log}_{n \in [N]} F(n) (\Pi F)(n) + O\Big(\frac{\log Hq}{\log N}\Big) \\ & \le \eps \mb{E}^{\log}_{n \in [N]} F(n) + \eps^2.
\end{align*}
It follows that $\mb{E}_{n \in [N]}^{\log} F(n) \le 2\eps$, so the claim \cref{want-55} follows due to the choice of $\eps$.
\end{proof}

We note a corollary under the same conditions which is good for taking averages, namely that for any $\eta$
\begin{equation}\label{avg-max} \mb{E}_{n \in [N]}^{\log} \Pi f(n) g(n) \ge \frac{\eta}{8} \mb{E}^{\log}_{n \in [N]} f(n) g(n) - \frac{\eta^2}{8}.\end{equation} Indeed, if we write $\delta := \mb{E}_{n \in [N]}^{\log} f(n) g(n)$ then \cref{avg-max} is trivial for $\delta \le \eta$, while for $\delta \ge \eta$ it follows from \cref{maximal}.

\section{Proof of the main theorem}\label{sec6}

We are now ready to prove our main result, \cref{thm:main}. The reader may find it helpful to revisit the overview given in the introduction.

\subsection{Setting up parameters.} \label{sec61} We begin by defining parameters and scales to be used in the proof. 

Let $r$ be the number of colours; we will fix this for the remainder of the proof and we may assume it is sufficiently large. Let $C_0$ be a suitable large positive integer (independent of $r$), recall that $\eps_0 := \frac{1}{10}$, and set 
\begin{equation}\label{params-defs} K := C_0r^{8}, \quad t := K^2, \quad V = (\lceil r^{4 + \eps_0}\rceil^C)!\quad \mbox{and} \quad N := \exp\exp (r^{50}),\end{equation} where here $C$ is the constant in \cref{main-sec3}.
Define \begin{equation}\label{b-set-def} B_0 := \{  V^{4^{i}} : i = 1,2,\dots, K^2\}.\end{equation}
We now define a doubly-indexed sequence of positive integer scales $(H_{i,j})_{i \in [t], j \in [2K]}$  by
\begin{equation}\label{hij-def}  H_{i,j} := \lfloor \exp \exp (r^{25}(4Ki + j)) \rfloor.\end{equation}
Note that we have the crude bounds
\begin{equation}\label{h-heir} \exp\exp((\log \log N)^{1/10}) < \max B_0 < H_{1,1} < \cdots < H_{1,2K} < H_{2,1} < \cdots < H_{t, 2K} <  e^{(\log N)^{1/10}},\end{equation} provided $r$ is large enough. We will also use the auxiliary scales $H_{i,0}$ defined by the same formula \cref{hij-def}. For $i \in [t]$ and $j \in [K]$, define $\mc{P}_{i,j}$ to be the set of primes satisfying $H_{i,2j-1} \le p \le H_{i,2j}$. We note that with this choice of parameters we have, by Mertens' theorem, $\sum_{p \in \mc{P}_{i,j}} \frac{1}{p} \gg r^{25}$.

\subsection{Positivity for $x, xy$}\label{sec:positivity}
The first step of the proof is to isolate the colour class in which we will eventually find our configuration $\{x + y, xy\}$, and to show that it is rich in configurations $\{x , xy\}$. This is a mild variant of \cite[Theorem~3.6]{Ric25}, which itself is related to results of Ahlswede, Khachatrian and S{\'{a}}rk{\"{o}}zy \cite{AKS99} and Davenport and Erd\H{o}s \cite{DE36}.

Consider an $r$-colouring $A_1 \cup \cdots \cup A_r = [N]$. For each $b \in B_0$ we have 
\[ \mb{E}_{n\in [N]}^{\log}\sum_{j = 1}^{r}\mbf{1}_{A_{j}}(b n) = \mb{E}_{n \in [N]}^{\log} 1_{[N]}(b n) = \frac{H_{N/b}}{H_N} \ge \tfrac{1}{2},\] where here $H_N$ denotes the harmonic sum. The last bound here follows (comfortably) using \cref{h-heir}. By summing over all $b \in B_0$ and an appeal to the pigeonhole principle, there is some colour class $A = A_{j}$ such that 
\[ \sum_{b \in B_0} \mb{E}_{n \in [N]}^{\log} 1_A(b n) \ge K^2/2r,\] which implies that $\mb{E}_{n \in [N]}^{\log} 1_A(b n) \ge 1/4r$ for at least $K^2/4r \ge K$ elements $b \in B_0$. Fix a set $B \subset B_0$ of $K$ such elements. We fix the colour class $A$ for the remainder of the proof.

 By repeated applications of \cref{lem:Elliot}, we have
\[ \mb{E}^{\log}_{n \in [N], p_{i,1} \in \mc{P}_{i,1},\dots , p_{i,j } \in \mc{P}_{i,j}} 1_A(b p_{i,1} \cdots p_{i,j} n)  \ge 1/8r\] for any $i \in [t]$, any $j \le K$ and for any $b \in B$. Note here that the error term arising from this repeated application of \cref{lem:Elliot} is dominated by $\ll K \max_{i,j}\big( \sum_{p \in \mc{P}_{i,j} }\frac{1}{p}\big)^{-1/2} \ll K r^{-25/2} \ll r^{-3}$.

Let the elements of $B$ be $b_1 < \cdots < b_K$. Then, applying the above with $b = b_{j}$ and summing over $1 \le j \le K$, we obtain
\[\sum_{j = 1}^{K}\mb{E}^{\log}_{n\in [N], p_{i,1}\in \mc{P}_{i,1}, \dots, p_{i,K}\in \mc{P}_{i,K}}1_A(b_{j}p_{i,1}\cdots p_{i,j}n) \ge \frac{K}{8r}.\] (Note here that, for the term with index $j$, we can include the extra averages over $\mc{P}_{i,j+1},\dots \mc{P}_{i,K}$ with no change to the expression.)
By Cauchy--Schwarz it follows that
\[\mb{E}^{\log}_{n\in [N],p_{i,1}\in \mc{P}_{i,1},\dots, p_{i,K}\in \mc{P}_{i,K}}\sum_{1\le j, j'\le K}1_A(b_{j}p_{i,1}\cdots p_{i,j}n)1_A(b_{j'}p_{i,1}\cdots p_{i,j'}n) \ge 2^{-6}\big(\frac{K}{r}\big)^2.\]
Since $K = r^{8}$, if $r$ is large enough we may exclude the $O(K)$ pairs of indices with $|j - j'|\le 1$ at the loss of at most a factor $2$. By symmetry we are also free to only include the pairs with $j > j'$ (at the loss of another factor of 2), and we thereby obtain
\begin{equation}\label{eq551}\mb{E}^{\log}_{n\in [N],p_{i,1}\in \mc{P}_{i,1},\dots, p_{i,K}\in \mc{P}_{i,K}}\sum_{\substack{1\le j' < j \le K\\ j \ge j' + 2}}1_A(b_{k}p_{i,1}\cdots p_{i,j'}n)1_A(b_{j}p_{i,1}\cdots p_{i,j}n) \ge 2^{-8}\big(\frac{K}{r}\big)^2.\end{equation}
By another repeated application of \cref{lem:Elliot} we have
\begin{align*} & \mb{E}^{\log}_{n\in [N],p_{i,1}\in \mc{P}_{i,1},\dots, p_{i,K}\in \mc{P}_{i,K}}1_A(b_{j'}p_{i,1}\cdots p_{i,j'}n)1_A(b_{j}p_{i,1}\cdots p_{i,j}n) \\ & \qquad\qquad  = \mb{E}^{\log}_{n\in [N],p_{i,1}\in \mc{P}_{i,1},\dots, p_{i,K}\in \mc{P}_{i,K}} 1_A(b_{j'} n)1_A(b_{j}p_{i,j'+1}\cdots p_{i,j}n)  + O(r^{-3})\end{align*} for each pair $j, j'$ with $j > j'$. From this and \cref{eq551}, it follows (again assuming $r$ large enough) that 
\[\mb{E}^{\log}_{n\in [N],p_{i,1}\in \mc{P}_{i,1},\dots, p_{i,K}\in \mc{P}_{i,K}} \sum_{\substack{1\le j' < j\le K\\ j \ge  j' +  2}}1_A(b_{j'}n)1_A(b_{j}p_{i,j' + 1}\cdots p_{i,j}n) \ge 2^{-9} \big(\frac{K}{r}\big)^2.\]
Recall that this is true for all $i \in [t]$. By pigeonhole, for each $i$ there is some $j'(i)$ such that 
\[ \mb{E}^{\log}_{n\in [N],p_{i,1}\in \mc{P}_{i,1},\dots, p_{i,K}\in \mc{P}_{i,K}}\sum_{j =  j'(i) +  2}^K1_A(b_{j'(i)}n)1_A(b_{j}p_{i,j'(i) + 1}\cdots p_{i,j}n) \ge 2^{-9} \frac{K}{r^2}.\]
Pass to a subset $I \subset [t]$ of size at least $t/K$ such that $j'(i)$ does not depend on $i \in I$, and denote by $j'$ the common value of these $j'(i)$. Writing $b := b_{j'}$ and $f(n) := 1_A(b n)$, we then have
\begin{equation}\label{main-ramsey-statement}\mb{E}^{\log}_{n\in [N],p_{i,1}\in \mc{P}_{i,1},\dots, p_{i,K}\in \mc{P}_{i,K}} \sum_{j = j' + 2 }^Kf(n)1_A(b_j p_{i,j' + 1}\cdots p_{i,j}n) \ge 2^{-9} \frac{K}{r^2}\end{equation} for all $i \in I$. Fix this choice of $j'$ (and hence of $b = b_{j'}$ and the function $f$) for the rest of the proof. Define also $I_* := I \setminus \{ \max I\}$ to be the elements of $I$ except the largest one; thus $|I_*| \ge |I|/2$.

\subsection{Proof of the main theorem}  We think of pairs $(i,j)$ (with $i \in I_*$ and $j \ge j' + 2$) as `scales' in the proof. Associated to any scale will be a pair of `projection' operators in the sense of \cref{proj-def}. Define $Q_{j} := b_{j}/b^2 V$. Note that $Q_{j}$ is an integer (in fact it equals $V^{4^{j} - 2\cdot 4^{j'} + 1}$). 

For each pair $(i, j)$ there will be two important projection operators $\Pi$, namely 
\begin{equation}\label{pi-pm} \Pi^{\sml}_{i,j} := \Pi_{Q_{j-1}, H_{i_+,0}} \quad \mbox{and} \quad \Pi^{\lrg}_{i, j} := \Pi_{Q_{j}, H_{i,0}}.\end{equation}
Here, $i_+$ denotes the next largest element in $I$ after $i$, which exists since $i \in I_* = I \setminus \{ \max I\}$. We informally refer to these as the  `small' and `large' projections associated to $(i,j)$.

We first apply the small projection operator to \cref{main-ramsey-statement} using \cref{maximal}, or more accurately \cref{avg-max}. Taking $\eta = 2^{-10} r^{-2}$ there, we have
\begin{equation}
\mb{E}^{\log}_{n\in [N],p_{i,*}\in \mc{P}_{i,*}} \sum_{j = j'+2}^K\Pi_{i,j}^{\sml}f(n)  1_A(b_{j}p_{i,k + 1}\cdots p_{i,j}n) \\  \ge \frac{\eta}{8} (2^{-9} \frac{K}{r^2}) - \frac{\eta^2}{8} K \gg  \frac{K}{r^4}.\label{proj-ramsey}
\end{equation} Here, and below, $\mb{E}^{\log}_{p_{i,*} \in \mc{P}_{i,*}}$ is shorthand for $\mb{E}^{\log}_{p_{i,1} \in \mc{P}_{i,1},\dots, p_{i,K} \in \mc{P}_{i,K}}$.
Now observe that by \cref{almost-per-proj} we have
\begin{align}\nonumber \Pi_{i,j}^{\sml}f(n) & =   \Pi_{i,j}^{\sml}f\big(n + \frac{b_{j}}{b^2 }p_{i,j' + 1}\cdots p_{i,j}\big) + O\Big(\frac{b_{j}}{b^2} \frac{p_{i,j'+1} \cdots p_{i,j}}{H_{i_+,0}}\Big)\\ & = \Pi_{i,j}^{\sml}f\big(n + \frac{b_{j}}{b^2 }p_{i,j' + 1}\cdots p_{i,j}\big) + O(r^{-10}). \label{eq498}\end{align}
The key points to observe here in applying \cref{almost-per-proj} are that 
$Q_{j - 1}  = \frac{b_{j - 1}}{b^2} V \mid \frac{b_{j}}{b^2}$ by the definitions of the $b_j$s, and also 
\[ \frac{b_{j}}{b^2 }p_{i,j' + 1}\cdots p_{i,j} \le V^{4^{K^2}} \prod_{j=1}^K H_{i,2j} < V^{4^{K^2}} H_{i,2K}^2 < r^{-10} H_{i_+,0}.\] The inequalities here are all very comfortably true (when $r$ is large); we have $r^{10} < V^{4^{K^2}} < H_{1,1} < H_{i,2K}$, that $H_{i,2j}^2 < H_{i, 2(j+1)}$ for all $j$, and that $H_{i,2K}^4 < H_{i_+,0}$, all of which follow using \cref{h-heir}.
From \cref{proj-ramsey,eq498} we have
\[
 \sum_{j = j'+2}^K\mb{E}^{\log}_{n\in [N],p_{i,*}\in \mc{P}_{i,*}}\Pi_{i,j}^{\sml} f\big(n + \frac{b_{j}}{b^2 }p_{i,j' + 1}\cdots p_{i,j}\big) 1_A(b_{j}p_{i,j' + 1}\cdots p_{i,j}n)  \gg \frac{K}{r^4}.\]
This, recall, is for all $i \in I_*$. Summing over all these $i$ gives
\begin{equation}\label{proj-ramsey-44}
 \sum_{i \in I_*} \sum_{j = j'+2}^K\mb{E}^{\log}_{n\in [N],p_{i,*}\in \mc{P}_{i,*}}\Pi_{i,j}^{\sml}f\big(n + \frac{b_{j}}{b^2}p_{i,j' + 1}\cdots p_{i,j}\big)1_A(b_{j}p_{i,j' + 1}\cdots p_{i,j}n) \gg \frac{K|I|}{r^4}.
\end{equation}
Suppose we had a similar result with $\Pi_{i,j}^{\sml}f$ replaced by $f$, that is
\begin{equation}\label{desired-result}
\sum_{i \in I_*}\sum_{j = j'+2}^K\mb{E}^{\log}_{n\in [N],p_{i,*}\in \mc{P}_{i,*}} f\big(n + \frac{b_{j}}{b^2 }p_{i,j' + 1}\cdots p_{i,j}\big)1_A(b_{j}p_{i,j' + 1}\cdots p_{i,j}n) \gg \frac{K|I|}{r^4}.
\end{equation}
In particular, for some choice of $i, j, p_{i,j'+1},\dots, p_{i,j}$ and $n \ge 3$ we would then have
\[ f\big(n + \frac{b_{j}}{b^2 }p_{i,j' + 1}\cdots p_{i,j}\big)1_A(b_{j}p_{i,j' + 1}\cdots p_{i,j}n) > 0.\]
Taking $x := b n$ and $y := \frac{b_{j}}{b}p_{i,j' + 1}\cdots p_{i,j}$ (and recalling that $f(n) = 1_A(b n)$) we then have $x  + y, xy \in A$, and the proof is complete. 

It remains to prove that we do indeed have \cref{desired-result}. As described in the introduction, we deduce it from \cref{proj-ramsey-44} in two steps. First, we replace the `small' projections $\Pi_{i,j}^{\sml}f$ in \cref{proj-ramsey-44} by the `large' projections $\Pi_{i,j}^{\lrg}f$. The error in making this replacement is
\begin{equation}\label{eq457-pre} \sum_{i \in I_*} \sum_{j = j'+2}^K\mb{E}^{\log}_{n\in [N],p_{i,*}\in \mc{P}_{i,*}} \big( \Pi_{i,j}^{\sml}f - \Pi_{i,j}^{\lrg} f\big)\big(n + \frac{b_{j}}{b^2 }p_{i,j' + 1}\cdots p_{i,j}\big)1_A(b_{j}p_{i,j' + 1}\cdots p_{i,j}n).\end{equation} By \cref{log-avg-shift} and the crude bounds $b_{j} \le V^{4^{K^2}}$, $p_{i,*} \le H_{t, 2K}$ this is
\begin{equation}\label{eq457} \sum_{i \in I_*} \sum_{j \ge j'+2 }\mb{E}^{\log}_{p_{i,*}\in \mc{P}_{i,*}} \mb{E}^{\log}_{n \in [N]} \big( \Pi_{i,j}^{\sml}f - \Pi_{i,j}^{\lrg} f\big)(n)\psi_{i,j,p_{i,*}}(n)  + O\Big(|I| K\frac{\log (V^{4^{K^2}}H^{2K}_{t, 2K})}{\log N} \Big),\end{equation}
where
\[ \psi_{i,j,p_{i,*}}(n) := 1_A\big(b_{j}p_{i,j' + 1}\cdots p_{i,j}( n -  \frac{b_{j}}{b^2} p_{i,j'+1} \cdots p_{i,j})\big).\]
For the rest of the proof (as in \cref{lem:tel-help}) we use the notation $\langle g_1, g_2 \rangle := \mb{E}_{n \in [N]}^{\log}g_1(n) \overline{g_2(n)}$ and $\Vert g \Vert^2 := \langle g, g\rangle = \mb{E}_{n \in [N]}^{\log} |g(n)|^2$. Using \cref{params-defs,h-heir}, the error term in \cref{eq457} is seen to be $O(|I|K r^{-10})$. Thus \cref{eq457} is
\[ \sum_{i \in I} \sum_{j = j'+2}^K\mb{E}^{\log}_{p_{i,*}\in \mc{P}_{i,*}} \langle \Pi_{i,j}^{\sml}f - \Pi_{i,j}^{\lrg} f,\psi_{i,j,p_{i,*}}\rangle  + O(|I|K r^{-10}).\]
By Cauchy--Schwarz and the $1$-boundedness of the functions $\psi$, this is bounded above by
\begin{align}\nonumber  \sum_{i \in I_*} \sum_{j = j'+2}^K & \Vert \Pi_{i,j}^{\lrg}f - \Pi_{i,j}^{\sml}f  \Vert +  O(|I|K r^{-10}) \\ & \le (|I| K)^{1/2}\Big( \sum_{i \in I_*} \sum_{j = j'+2}^K \Vert \Pi_{i,j}^{\lrg}f - \Pi_{i,j}^{\sml}f  \Vert^2\Big)^{1/2}  + O(|I| K r^{-10}). \label{eq712aa}\end{align}
For each $i,j$ we apply \cref{lem:tel-help} with $q = Q_{j-1}$, $q' = Q_j$, $H = H_{i_+,0}$ and $H' = H_{i,0}$, obtaining
\begin{align}\nonumber \Vert \Pi_{i,j}^{\lrg}f - \Pi_{i,j}^{\sml}f  \Vert^2  & \le \Vert \Pi_{i,j}^{\lrg}f\Vert^2  - \Vert \Pi_{i,j}^{\sml}f  \Vert^2 + O\Big(\frac{\log Q_{j} H_{i_+,0}}{\log N}\Big) + O\Big( \frac{Q_{j} H_{i,0}}{Q_{j - 1} H_{i_+,0}}  \Big) \\ & \le \Vert \Pi_{i,j}^{\lrg}f\Vert^2  - \Vert \Pi_{i,j}^{\sml}f  \Vert^2 + r^{-10}. \label{713aa}\end{align}
The explain the last line here, we can bound the first error term by $< (\log N)^{-1/2} < r^{-20}$ using \cref{h-heir}. The second error term can be bounded using $Q_{j} < \max B_0$ and the fact that $H_{i_+, 0} \ge H_{i+1,0} > r^{20} (\max B_0) H_{i,0}$, which can be verified using the definitions \cref{b-set-def,hij-def}. 

Summing \cref{713aa} over $i,j$ gives
\[\sum_{i \in I_*} \sum_{j = j'+2}^K \Vert \Pi_{i,j}^{\lrg}f - \Pi_{i,j}^{\sml}f  \Vert^2 \le  \sum_{i \in I_*}\sum_{j = j'+2}^K \big(  \Vert \Pi_{i,j}^{\lrg}f\Vert^2 - \Vert \Pi_{i,j}^{\sml}f  \Vert^2\big) + O(|I|K r^{-10}).\]
Recalling the definitions \cref{pi-pm} of the two projection operators, we see that the bracketed sum has considerable cancellation; the only uncancelled positive terms are the $\Vert \Pi_{i,j}^{\lrg}f\Vert^2$ terms from scales $(i,j)$ which are not of the form $(\overline{i}_+, \overline{j} - 1)$ for some other scale $(\overline{i}, \overline{j})$, that is to say with $i = \min( I)$ or $j = K$; thus the bracketed sum is bounded by $|I| + K$. It follows that \cref{eq712aa} is bounded by
\[ \le (|I| K)^{1/2} \big(|I| + K + O(r^{-10})\big)^{1/2} + O(|I|K r^{-10}) \ll C_0^{-1/2}r^{-4} |I|K,\] using here that $K = C_0 r^{8}$ and $|I| \ge t/K = K$.

If the constant $C_0$ is chosen large enough, this means that \cref{eq457-pre} is small compared with the RHS of \cref{proj-ramsey-44}.

To summarise so far, we have replaced the `small' projections $\Pi^{\sml}_{i,j}$ in \cref{proj-ramsey-44} by the `larger' ones $\Pi^{\lrg}_{i, j}$ at the loss of only the quality of the implied constant, that is to say we have shown

\[
 \sum_{i \in I_*} \sum_{j = j'+2}^K\mb{E}^{\log}_{n\in [N],p_{i,*}\in \mc{P}_{i,*}}\Pi_{i,j}^{\lrg}f(n + \frac{b_{j}}{b^2 }p_{i,j' + 1}\cdots p_{i,j})1_A(b_{j}p_{i,j' + 1}\cdots p_{i,j}n) \gg \frac{K|I|}{r^4}.
\]

To complete the proof of \cref{desired-result} (and hence of \cref{thm:main}) we now replace the copies of $\Pi_{i,j}^{\lrg} f$ by $f$ itself. For this we can work one value of $(i,j)$ at a time; thus it is enough to show that, for each $(i,j)$, 

\begin{equation}\label{proj-ramsey-46}
 \mb{E}^{\log}_{n\in [N],p_{i,*}\in \mc{P}_{i,*}}\big( f - \Pi_{i,j}^{\lrg}f\big)(n + \frac{b_{j}}{b^2 }p_{i,j' + 1}\cdots p_{i,j})1_A(b_{j}p_{i,j' + 1}\cdots p_{i,j}n) \le r^{-4-\eps_0}.
\end{equation}
(Here $\eps_0 = \frac{1}{10}$ again). To prove this we use \cref{main-sec3}. Indeed, we note that the LHS of \cref{proj-ramsey-46} is of the form 
\[  \mb{E}_{n \in [N],p \in \mc{P},p' \in \mc{P}'}^{\log}  f_1(n + \lambda p p') f_2(\lambda n p p').\]
(which is exactly the expression in \cref{key-assump}) where $f_1 = f - \Pi_{Q_{j},H_{i,0}}f$, $f_2(n) = 1_A(b^2 n)$, $\lambda = b_{j}/b^2$, $\mc{P} = \mc{P}_{i, j}$, $\mc{P}' = \mc{P}_{i, j'+1}\cdots \mc{P}_{i,j-1}$ and $k = j - j' -1 \in \N$.

Note here that every element of $\mc{P}'$ has just one representation in this product since all primes in $\mc{P}_{i, j'+1}$ are much smaller than those in $\mc{P}_{i, j'+2}$, and so on, and so $\mb{E}^{\log}_{p_{i,j'+1}\in \mc{P}_{i,j'+1},\dots, p_{i,j-1}\in \mc{P}_{i,j-1}}$ is the same thing as $\mb{E}^{\log}_{p' \in \mc{P}'}$.

The setup for the application of \cref{main-sec3} requires some discussion. We address the various requirements in the statement of that proposition in turn.
\begin{itemize}
\item The parameter $k$ will be $j - j'-1$. Note $1 \le k \le K$, so the condition $k \le \log \log N$ is satisfied due to the choices \cref{params-defs}.
    \item We will take $\delta := \lceil r^{4 + \eps_0}\rceil^{-1}$ (the aim being to show that the LHS of \cref{proj-ramsey-46} is at most $\delta$). The conditions $1/\delta \le \log \log N$ and $k \le \delta^{-10}$ are then immediately checked.
    \item We take $\mc{P} = \mc{P}_{i,j}$ and $\mc{P'} = \mc{P}_{i,j'+1} \cdots \mc{P}_{i,j-1}$. For notational consistency with \cref{main-sec3}, write $\mc{P}'_{\ell} := \mc{P}_{i,j' + \ell}$ for $\ell \in [k]$. Thus, by definition, $\mc{P}'_{\ell}$ is the set of primes in the interval $I_{\ell} = [H_{i, 2(j' + \ell) - 1}, H_{i, 2(j' + \ell)}]$, which is exactly the situation in \cref{main-sec3}. By \cref{hij-def} and the choice of parameters we have $\log \log (\max(I_{\ell})) - \log \log (\min(I_{\ell})) \ge r^{25} > k\delta^{-4-\eps_0}$. (This is essentially the `pinch point' for the analysis; for the main result to have the stated exponent of 50 we need $(4 + \eps_0)^2 < 17$ here.)
    \item We take $P_1 = H_{i, 2j - 1}$, $P_2 = H_{i, 2j}$. The condition $P_2 < \exp((\log N)^{1/4})$ is implied by \cref{h-heir}, if $C_2$ is large enough.
    \item We take
    $P'_1 = H_{i, 2j'+ 1}$, $P'_2 = H^2_{i, 2j - 2}$. Note here that $\min(\mc{P}') \ge P'_1$ and $\max (\mc{P}') \le H_{i, 2j' + 2}  \cdots H_{i, 2j - 2} \le P'_2$, as required, using here that $H_{i,j}^2 < H_{i,j+1}$. The condition $P'_2 \ge \exp\exp((\log \log N)^{1/10})$ follows immediately from \cref{h-heir}.
  \item The condition $\lambda \le e^{(\log N)^{1/4}}$ follows from \cref{h-heir} and the fact that $\lambda \le \max B_0$. 

  \item That all prime factors of $\lambda$ are less than $P'_1$ is immediate from the lower bound $H_{1,1} > \max B_0$.
\end{itemize}
Suppose that \cref{proj-ramsey-46} does not hold. By the above discussion we are in a position to apply \cref{main-sec3}. Note that $V$ in the conclusion there is, with our choice of parameters, exactly the same as $V$ in \cref{params-defs}. Since $\lfloor P_1^{1/8}\rfloor \ge \lfloor H_{i,1}^{1/8} \rfloor > H_{i,0}^2$, we may take the parameter $H$ in \cref{main-sec3} to be $H_{i,0}^2$. The conclusion of \cref{main-sec3} is then that 
\[ \Vert f - \Pi_{Q_{j}, H_{i,0}} f \Vert_{U^1_{\log}[N; Q_{j}, H_{i,0}^2]} \gg K^{-O(1)}.\] (Here we observed from the various definitions that $Q_j = \lambda V$.) However, this is contrary to \cref{lem:proj-check}, which asserts that the LHS is $\ll H_{i,0}^{-1}$, which is enormously smaller. This contradiction shows that we indeed have \cref{proj-ramsey-46}, and all of the required statements are proven.

\section{Further remarks}

We end the main body of the paper with a series of remarks regarding the bounds obtained for the pattern $\{x+y,xy\}$ and related patterns.

First of all, we comment that there are two different ways in which the double exponential bound in the main theorem seems hard to improve using anything like the methods of this paper. The first is that it seems difficult to avoid the need to define a highly divisible set such as the set $B_0$ in \cref{b-set-def}, and any such definition seems to immediately lead to elements of double exponential size in $r$. Second, the hierarchy of scales \cref{h-heir} needed to be chosen with $\log \log (H_{i, j+1}) - \log \log (H_{i,j}) \gg 1$ in order that the primes in this range satisfy $\sum_{p \in \mc{P}} \frac{1}{p} \gg 1$, which is crucial in the application of \cref{main-sec3}. It is possible to show using arguments somewhat related to those in \cite{Tao24} that one cannot do appreciably better by choosing an alternative set to the primes. In particular, when applying \cref{lem:Elliot} with an alternate set of integers $\mc{P}$, the error term is dominated by $\gamma(\mc{P})^{1/2}$ and one can prove that for any set $\mc{P}\subseteq [2,X]$ one has $\gamma(\mc{P})\gg (\log\log X)^{-1}$. 

Next we make some comments on the potential for extending the underlying analytic method to handle the pattern $\{x,x+y,xy\}$ (for which partition regularity was established by Moreira \cite{Mor17}, but with essentially no bounds). Presumably any such approach would require one to (at least) establish an inverse theorem establishing some structure assuming that 
\begin{equation}\label{3-fns-assump} \big| \mb{E}_{n \in [N], p \in \mc{P}}^{\log} f_1(n) f_2(n + p) f_3(np)\big| \ge \delta,\end{equation} where $\mc{P}$ is a suitable set of almost primes (compare here with \cref{key-assump}. The following two rather different examples suggest this may be far from straightforward.

\begin{itemize}
    \item Suppose first that $f_2 = 1$. Let $(\xi_p)_{p \in \mc{P}}$ be an arbitrary sequence of unit complex numbers, and define $f_1(n) := \xi_p$ if $p$ is the least prime in $\mc{P}$ which divides $n$, and $f_1(n) = 0$ otherwise. Set $f_3(n) := \overline{f_1(n)}$. Assuming that $\sum_{p \in \mc{P}} \frac{1}{p} \ggg 1$, the (logarithmic) proportion of $n$ for which $f_1(n) = 0$ is negligible. Now observe that $f_1(n) f_3(pn) = 1$ if the least prime factor of $n$ in $\mc{P}$ is less than $p$. On average over $p, n$, one expects this to happen half the time. If, one other other hand, the least prime factor of $n$ is $p' > p$ then we have $f_1(n) f_3(pn) = \xi_{p'}\overline{\xi_p}$, and typically we expect cancellation of this when summed over $p, p'$. Examples of this type therefore give \cref{3-fns-assump} with $\delta \approx 1/2$, but with $f_1, f_3$ only having rather weak structure.
    \item Now suppose that $f_1(n) = e(\alpha n^2)$, $f_2(n) = e(-\alpha n^2)$ and $f_3(n) = e(2\alpha n)$ for some $\alpha \in \R$. One may then observe that $f_1(n)f_2(n + p)f_3(np) = e(-\alpha p^2)$. If $P$ is the scale of $\mc{P}$ then this is $\approx 1$ for $|\alpha| \lessapprox P^{-2}$. 
\end{itemize}
Even with an inverse theorem for \cref{3-fns-assump} in hand, it is far from clear how the other arguments of the paper might be modified.

\appendix  
\section{Properties of averages}
In this appendix we assemble simple properties of (mostly) logarithmic averages. Throughout the appendix we assume $N \ge 2$ to avoid trivialities. For $m \in \R_{\ge 1}$, $H_m$ denotes the harmonic sum $\sum_{n \le m} \frac{1}{n}$; we do not require $m$ to be an integer. The first lemma concerns the behaviour of averages (both uniform and logarithmic) under shifts.

\begin{lemma}\label{avg-shifts}
Let $f : \N \rightarrow \C$ be a 1-bounded function and let $h \in \Z$. Then 
\begin{equation}\label{ord-avg-shift} \big| \mb{E}_{n \in [N]} f(n) - \mb{E}_{n \in [N]} f(n + h)\big| \ll \frac{|h|}{N} \end{equation} and, if $h \ne 0$,
\begin{equation}\label{log-avg-shift} \big| \mb{E}^{\log}_{n \in [N]} f(n) - \mb{E}^{\log}_{n \in [N]} f(n + h)\big| \ll \frac{1 + \log |h|}{\log N}. \end{equation}
\end{lemma}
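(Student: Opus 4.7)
The plan is to handle both bounds by direct computation, writing each difference as a sum of boundary and comparison terms after reindexing.

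For \cref{ord-avg-shift}, I would change variables $m = n + h$ in the shifted average to write
\[ \mb{E}_{n \in [N]} f(n + h) = \frac{1}{N} \sum_{m = h + 1}^{N + h} f(m). \]
Subtracting $\frac{1}{N} \sum_{m=1}^N f(m)$, the two sums agree on their overlap, leaving only the boundary contributions $\frac{1}{N}\big(\sum_{m=1}^{h} f(m) - \sum_{m = N+1}^{N+h} f(m)\big)$ in the case $h > 0$, with a symmetric expression for $h < 0$. The $1$-boundedness of $f$ gives the $\ll |h|/N$ bound immediately.

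For \cref{log-avg-shift}, the same substitution yields
\[ \mb{E}^{\log}_{n \in [N]} f(n + h) = \frac{1}{H_N} \sum_{m = h+1}^{N + h} \frac{f(m)}{m - h}. \]
I would assume without loss of generality that $h > 0$ (the $h < 0$ case is identical after relabelling) and, since the bound is trivial when $|h| \ge N/2$ (both averages are $O(1)$ while $(1 + \log|h|)/\log N \gg 1$), also that $h < N/2$. The difference then splits into three pieces: a low-end boundary $\frac{1}{H_N}\sum_{m=1}^h \frac{f(m)}{m}$, a high-end boundary $\frac{1}{H_N}\sum_{m = N+1}^{N + h} \frac{f(m)}{m - h}$, and an interior term $\frac{1}{H_N}\sum_{m = h+1}^N f(m)\big(\frac{1}{m} - \frac{1}{m - h}\big)$.

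The low-end boundary contributes $\ll H_h \ll 1 + \log|h|$, while the high-end boundary is bounded by $\sum_{j = N - h + 1}^N 1/j \ll h/N$ using $h \le N/2$. The key step is the interior term: the partial-fraction identity $\frac{1}{m - h} - \frac{1}{m} = \frac{h}{m(m - h)}$ gives a telescoping sum, and
\[ \sum_{m = h+1}^N \Big(\frac{1}{m - h} - \frac{1}{m}\Big) = H_{N-h} - (H_N - H_h) = H_h - (H_N - H_{N-h}) \ll 1 + \log|h|, \]
since $H_N - H_{N-h} \ll h/N$ for $h \le N/2$. Dividing all three contributions by $H_N \asymp \log N$ yields the stated bound. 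The only subtle point is the telescoping identity for the interior piece; everything else is routine.
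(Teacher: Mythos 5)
Your proof is correct and follows essentially the same route as the paper: reindex the shifted sum, split the difference into two boundary pieces plus an interior comparison term, and bound each by $O(1+\log|h|)$ before dividing by $H_N$. The only (minor) difference is that you evaluate the interior sum $\sum_{m=h+1}^N(\tfrac{1}{m-h}-\tfrac{1}{m})$ exactly as $H_h-(H_N-H_{N-h})$, whereas the paper bounds it via $h\sum_m \tfrac{1}{m(m-h)}$ with a range split at $2h$; your version is if anything slightly cleaner.
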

\begin{proof} \cref{ord-avg-shift} is straightforward. For \cref{log-avg-shift}, we may suppose $|h| \le N/2$ else the result is trivial. 
Without loss of generality we may suppose $h$ is positive, since the case $h$ negative follows from the positive case. We have
\begin{equation}\label{exp-a1} \sum_{n \in [N]} \frac{f(n + h)}{n} - \sum_{n \in [N]}\frac{f(n)}{n} = \sum_{m = h + 1}^N \big( \frac{f(m)}{m - h} - \frac{f(m)}{m}\big) - \sum_{n = 1}^h \frac{f(n)}{n} + \sum_{n = N - h +1}^{N} \frac{f(n + h)}{n}.\end{equation}
The second sum on the right is $\ll 1 + \log h$, whilst the third is $\le \log N - \log (N - h + 1) + O(1) \ll 1$ since $h \le N/2$. Finally, the first sum on the right is bounded above by $h \sum_{m = h+1}^N \frac{1}{m (m - h)}$. Since $\sum_{m = h+1}^{2h} \frac{1}{m (m - h)} \ll \frac{1}{h}\sum_{m = h+1}^{2h} \frac{1}{m - h} \ll \frac{1+ \log h}{h}$, and $\sum_{m = 2h + 1}^N \frac{1}{m (m - h)} \ll \sum_{m > 2h} m^{-2} \ll h^{-1}$, the first sum on the right in \cref{exp-a1} is bounded by $\ll 1 + \log h$. Putting all this together, the result follows.
\end{proof}

Next we give a result about splitting into residue classes.
\begin{lemma}\label{residue-split} Let $f : \Z \rightarrow \C$ be $1$-bounded. Let $q \in \N$. Then
\[  \mb{E}_{a \in \{0,1,\dots, q-1\}} \mb{E}_{n \in [N]}^{\log} f(qn + a) = \mb{E}_{n \in [N]}^{\log} f(n) + O\Big(\frac{1 + \log q}{\log N}\Big).\]
\end{lemma}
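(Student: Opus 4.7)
The plan is to switch from summing over $n$ to summing over $m := qn + a$. Writing out the definitions, the LHS equals
\[
\frac{1}{q H_N} \sum_{a=0}^{q-1} \sum_{n=1}^{N} \frac{f(qn+a)}{n}.
\]
Under the change of variables $m = qn+a$, the pairs $(a, n)$ with $a \in \{0,\dots,q-1\}$ and $n \in [N]$ parametrise the integers $m \in [q, qN + q - 1]$ bijectively. The only obstruction is that the weights do not match: we have $1/n$ in place of the desired $q/m = q/(qn+a)$. I would bridge this gap by writing
\[
\frac{1}{n} = \frac{q}{qn+a} + \frac{a}{n(qn+a)},
\]
and bounding the remainder: since $0 \le a < q$, one has $\sum_{n=1}^{N} \frac{a}{n(qn+a)} \le \sum_{n\ge 1} \frac{1}{qn^2} \ll 1/q$ for each fixed $a$, and so $O(1)$ upon summing over $a$. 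Using $1$-boundedness of $f$, this yields
\[
\frac{1}{q}\sum_{a=0}^{q-1}\mb{E}^{\log}_{n\in[N]} f(qn+a) = \frac{1}{H_N}\sum_{m=q}^{qN+q-1} \frac{f(m)}{m} + O\Big(\frac{1}{\log N}\Big).
\]

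It then remains to compare this to $\mb{E}^{\log}_{n \in [N]} f(n) = \frac{1}{H_N}\sum_{m=1}^{N} f(m)/m$. The two sums differ by the two tails $\sum_{m=N+1}^{qN+q-1} f(m)/m$ and $\sum_{m=1}^{q-1} f(m)/m$, each of which, by $1$-boundedness of $f$, is bounded in absolute value by the corresponding partial harmonic sum. The former is $\log(qN+q-1) - \log N + O(1/N) \ll 1 + \log q$, and the latter is trivially $\ll 1 + \log q$. Dividing by $H_N \asymp \log N$ produces the stated error $O((1 + \log q)/\log N)$.

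No step in this argument is technically difficult: the entire proof is careful bookkeeping for the dilation $n \mapsto qn+a$ on logarithmic scale, together with the observation that the discrepancy between the weight $1/n$ and the weight $q/(qn+a)$ is summable. One could alternatively attempt to deduce the lemma iteratively from the shift estimate \cref{log-avg-shift}, but that approach would incur a loss proportional to $q$ rather than $\log q$, so the direct change-of-variables is essential to obtain the sharp error term.
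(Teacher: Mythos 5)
Your proof is correct and takes essentially the same route as the paper: expand the average, re-index via $m = qn+a$ over $[q, qN+q-1]$, absorb the weight mismatch between $\tfrac1n$ and $\tfrac{q}{qn+a}$, and bound the head and tail harmonic sums by $O(1+\log q)$ before dividing by $H_N$. One small slip: $\tfrac{a}{n(qn+a)} \le \tfrac{a}{qn^2}$, not $\tfrac{1}{qn^2}$, so the per-$a$ remainder is $O(a/q)=O(1)$ and the sum over $a$ is $O(q)$ rather than $O(1)$; this is harmless because the prefactor $\tfrac{1}{qH_N}$ still yields the claimed $O(1/\log N)$ error.
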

\begin{proof} We may suppose $2 \le q \le N$ since the result is trivial otherwise.
The LHS may be expanded as 
\[ \frac{1}{H_N} \sum_{a \in\{0,1,\dots, q-1\} }\sum_{n \in [N]} \frac{f(qn + a)}{qn}.\]
The change if we replace $qn$ in the denominator by $qn + a$ is bounded above by
   \[ \ll \frac{1}{\log N}\sup_a \sum_{n \in [N]} \Big| \frac{1}{n} - \frac{1}{n + a/q}\Big| \ll \frac{1}{\log N},\] which is acceptable.
   If we make this change, the resulting expression is
   \[ \frac{1}{H_N} \sum_{q \le n' \le qN + (q-1)} \frac{f(n')}{n'} = \mb{E}_{n \in [N]}^{\log} f(n) + O\Big(\frac{H_q}{H_N}\Big) + O\Big(\frac{H_{qN + (q - 1)} - H_N}{H_N}\Big).\] The two error terms are $\ll \frac{1 + \log q}{\log N}$, and this concludes the proof.
\end{proof}

We also need the following related result.
\begin{lemma}\label{frobenius-coin}
Let $q, b$ be coprime positive integers and let $H$ be a further positive integer parameter. Let $f : \N \rightarrow \C$ be a 1-bounded function. Then
\[ \mb{E}_{n \in [N]}^{\log} \mb{E}_{h \in [H]} f(qn + bh) = \mb{E}^{\log}_{n \in [N]} f(n) + O\Big(\frac{1 + \log q + \log b H}{\log N}\Big) + O\big(\frac{q}{H}\big).\]
\end{lemma}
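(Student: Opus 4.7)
The plan is to reduce this to the previous residue-class lemma (\cref{residue-split}) together with the logarithmic shift estimate \cref{log-avg-shift}, exploiting the coprimality $(q,b) = 1$.

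First, for each $h \in [H]$ I would write $bh = qk_h + a_h$ with $a_h \in \{0,1,\dots,q-1\}$ the residue of $bh$ modulo $q$ and $k_h = (bh - a_h)/q$, so that $|k_h| \le bH/q$ and in particular $|qk_h| \le bH$. Then
\[ f(qn + bh) = f\bigl(q(n + k_h) + a_h\bigr). \]
For each fixed $h$, applying \cref{log-avg-shift} with shift $qk_h$ gives
\[ \mb{E}_{n \in [N]}^{\log} f\bigl(q(n + k_h) + a_h\bigr) = \mb{E}_{n \in [N]}^{\log} f(qn + a_h) + O\Big(\frac{1 + \log bH}{\log N}\Big). \]

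Next I would average over $h \in [H]$. Since $(q,b) = 1$, the map $h \mapsto bh \bmod q$ is uniform on residues up to $O(1)$: for each $a \in \{0,\dots,q-1\}$, the number of $h \in [H]$ with $a_h = a$ is $H/q + O(1)$. Hence
\[ \mb{E}_{h \in [H]} \mb{E}_{n \in [N]}^{\log} f(qn + a_h) = \mb{E}_{a \in \{0,\dots,q-1\}}\mb{E}_{n \in [N]}^{\log} f(qn + a) + O\Big(\frac{q}{H}\Big), \]
where the $O(q/H)$ accounts both for the $O(1)$ fluctuations in the residue counts and for boundary edge effects, using the $1$-boundedness of $f$.

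Finally, applying \cref{residue-split} converts the average over $a$ into $\mb{E}_{n \in [N]}^{\log} f(n) + O\bigl(\frac{1 + \log q}{\log N}\bigr)$. Collecting the three error contributions and using the trivial case $q \ge N$ or $H \le q$ (both of which make the claimed bound vacuous) yields the stated estimate. No step presents any real obstacle; the only mild point to watch is that in step one, the shift $qk_h$ is at most $bH$ rather than $bH/q$, so the logarithmic shift cost is $O\bigl(\frac{1+\log bH}{\log N}\bigr)$ rather than something smaller, which is exactly the middle error term in the statement.
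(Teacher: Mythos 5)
Your proposal is correct and follows essentially the same route as the paper: reduce modulo $q$ by writing $bh = qk_h + a_h$, absorb the integer shift $k_h$ in $n$ via \cref{log-avg-shift} at cost $O\big(\frac{1+\log bH}{\log N}\big)$, use coprimality to equidistribute $a_h$ over residues at cost $O(q/H)$, and finish with \cref{residue-split}. (The paper realises the $O(q/H)$ term by first rounding $H$ down to a multiple of $q$ so that the residues equidistribute exactly, whereas you keep $H$ and absorb the $O(1)$ fluctuation per residue class, but these are interchangeable.)
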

\begin{proof}
Clearly we may assume that $H \ge q$, as the result is trivial otherwise. If we replace $H$ by $\tilde H := q \lfloor H/q\rfloor$, the LHS changes by at most $O(q/H)$. It therefore suffices to consider the case $q \mid H$. In this case we establish the result without the $O(q/H)$ error term.  We have $f(qn + bh) = f(q (n + \sigma_h) + (bh)_q)$, where $(bh)_q$ denotes the unique element of $\{0,1,\dots, q-1\}$ congruent to $bh \md{q}$, and $\sigma_h := \frac{1}{q}(bh - (bh)_q)$. By \cref{log-avg-shift}, we have 
\[ \mb{E}^{\log}_{n \in [N]} f(q (n + \sigma_h) + (bh)_q) = \mb{E}^{\log}_{n \in [N]} f(q n + (bh)_q) + O\Big( \frac{1 + \log bH}{\log N} \Big).\]
However, since $(bh)_q$ ranges over $\{0,1,\dots, q - 1\}$ as $h$ ranges over any interval of length $H/q$,
\[ \mb{E}_{h \in [H]} \mb{E}^{\log}_{n \in [N]} f(q n + (bh)_q) = \mb{E}_{a \in \{0,1,\dots, q-1\}} \mb{E}^{\log}_{n \in [N]} f(q n + a).\]
The result now follows from \cref{residue-split}. 
\end{proof}

The next result states that logarithmic averages are essentially preserved under dilations. This is standard and appears, for instance, as \cite[Lemma~2.1]{Ric25}.
\begin{lemma}\label{lem:dilate}
Let $f:\N \to \C$ be $1$-bounded and let $q \in \N$. Then 
\[\Big|\mb{E}_{n\in [N]}^{\log}\big( f(n) - q\mbf{1}_{q|n}f(n/q)\big)\Big| \ll \frac{\log q}{\log N}.\]
\end{lemma}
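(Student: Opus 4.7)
The plan is a direct computation on the defining sums, exploiting the multiplicative change of variable $n = qm$. First I would write out both logarithmic averages explicitly:
\[
\mb{E}_{n \in [N]}^{\log} f(n) = \frac{1}{H_N}\sum_{n=1}^N \frac{f(n)}{n}, \qquad \mb{E}_{n \in [N]}^{\log}\big( q\mbf{1}_{q\mid n} f(n/q)\big) = \frac{1}{H_N}\sum_{\substack{n \in [N]\\ q \mid n}} \frac{q f(n/q)}{n}.
\]
In the second sum, setting $n = qm$ gives $\frac{1}{H_N}\sum_{m=1}^{\lfloor N/q\rfloor} \frac{f(m)}{m}$: the $q$ in the numerator cancels exactly with the $q$ hidden in the denominator after the substitution.

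Subtracting, the difference telescopes to a tail of the logarithmic sum:
\[
\mb{E}_{n\in [N]}^{\log}\bigl(f(n) - q\mbf{1}_{q\mid n} f(n/q)\bigr) = \frac{1}{H_N}\sum_{n = \lfloor N/q\rfloor + 1}^{N} \frac{f(n)}{n},
\]
with the convention that the sum is empty (and the difference equals $\mb{E}_{n\in [N]}^{\log} f(n)$) when $q > N$.

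Since $f$ is $1$-bounded, the numerator is bounded in absolute value by $H_N - H_{\lfloor N/q\rfloor}$ (interpreted as $H_N$ when $q > N$). For $q \le N$ the standard estimate $H_N - H_{\lfloor N/q\rfloor} = \log q + O(1)$ gives a bound of $O\bigl(\frac{\log q}{\log N}\bigr)$ after dividing by $H_N \asymp \log N$. For $q > N$, the trivial bound $|\mb{E}^{\log}_{n\in [N]} f(n)| \le 1$ together with $\frac{\log q}{\log N} \ge 1$ closes the case. There is no real obstacle here; the only small care needed is handling the cutoff $\lfloor N/q\rfloor$ cleanly and the degenerate range $q > N$.
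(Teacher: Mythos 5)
Your proof is correct and follows essentially the same route as the paper: the substitution $n = qm$ turns the dilated term into the truncated logarithmic sum up to $N/q$, and the difference is then bounded by $\frac{1}{H_N}(H_N - H_{N/q}) = \frac{\log q + O(1)}{\log N}$, with the degenerate cases ($q=1$, $q>N$) handled trivially. No gap.
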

\begin{proof}
When $q = 1$ the result is trivial, so suppose $q \ge 2$. By definition,
\[\mb{E}_{n\in [N]}^{\log}\big( f(n) - q\mbf{1}_{q|n}f(n/q)\big)  = \frac{1}{H_N}\sum_{n\in [N]}\frac{f(n) - q\mbf{1}_{q|n}f(n/q)}{n} =  \frac{1}{H_N} \Big( \sum_{n \in [N]} \frac{f(n)}{n} - \sum_{n' \in [N/q]}\frac{f(n')}{n'}\Big) . \]
This is bounded by $\le \frac{1}{H_N}\big( H_{N} - H_{N/q}\big) = \frac{1}{H_N} (\log q + O(1))$, and the result follows.
\end{proof}

We next require the logarithmic version of Elliott's inequality. The proof is exactly that given in \cite[Corollary~2.3]{Ric25} modulo tracking error terms. 
\begin{lemma}\label{lem:Elliot}
Let $\mc{P}$ be a finite set of primes, all bounded by $P$. Let $f:\N \to \C$ be $1$-bounded. We have that 
\[\Big|\mb{E}_{n\in [N]}^{\log}f(n) - \mb{E}_{n\in [N],p\in \mc{P}}^{\log}f(pn)\Big| \ll \frac{\log P}{\log N} + \Big(\sum_{p\in \mc{P}}\frac{1}{p}\Big)^{-1/2}.\]
\end{lemma}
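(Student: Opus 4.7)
The plan is to follow the approach of Richter (\cite[Corollary~2.3]{Ric25}), paying careful attention to quantitative error terms. The argument proceeds in two stages: first reduce to a second-moment bound for the prime-counting function $\omega_{\mc{P}}(n) := |\{p \in \mc{P} : p \mid n\}|$, and then estimate this moment directly.

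For the first stage, I would apply \cref{lem:dilate} with $q = p$ (equivalently, substitute $m = pn$ in the inner sum) to obtain, for each fixed prime $p$,
\[ \mb{E}^{\log}_{n \in [N]} f(pn) = p\, \mb{E}^{\log}_{n \in [N]} \mbf{1}_{p \mid n}\, f(n) + O\Big(\frac{\log p}{\log N}\Big). \]
Averaging with logarithmic weight $1/(pS)$ over $p \in \mc{P}$, where $S := \sum_{p \in \mc{P}} 1/p$, and noting that the factor $p$ cancels $1/p$, yields
\[ \mb{E}^{\log}_{n, p} f(pn) = \frac{1}{S}\, \mb{E}^{\log}_n f(n)\, \omega_{\mc{P}}(n) + O\Big(\frac{\log P}{\log N}\Big). \]
It then remains to bound $|\mb{E}^{\log}_n f(n)(\omega_{\mc{P}}(n)/S - 1)|$. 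By $1$-boundedness of $f$ and Cauchy--Schwarz, this in turn reduces to the Tur\'an--Kubilius-style bound $\mb{E}^{\log}_n (\omega_{\mc{P}}(n) - S)^2 \ll S + S\log P/\log N$.

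For the second stage, write $\alpha_m := H_{\lfloor N/m\rfloor}/H_N$, so that $\mb{E}^{\log}_n \mbf{1}_{m \mid n} = \alpha_m/m$, with the asymptotic $\alpha_m = 1 - \log m/\log N + O(1/\log N)$. Using Mertens' estimate $\sum_{p \le P} \log p/p \ll \log P$, the first moment is $\mb{E}^{\log}_n \omega_{\mc{P}}(n) = \sum_p \alpha_p/p = S + O(\log P/\log N)$. For the second moment, expand $\omega_{\mc{P}}^2$ into diagonal and off-diagonal parts, using $\mbf{1}_{p \mid n}\mbf{1}_{p' \mid n} = \mbf{1}_{pp' \mid n}$ for distinct primes: the diagonal contributes $S + O(\log P/\log N)$ and the off-diagonal $\sum_{p \ne p'} \alpha_{pp'}/(pp')$ equals $S^2 + O(1 + S\log P/\log N)$. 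Combining gives the variance bound. To finish, Cauchy--Schwarz and the elementary inequality $\sqrt{ab} \le a + b$ (applied with $a = 1/S$ and $b = \log P/\log N$) absorb the cross term into $S^{-1/2} + \log P/\log N$, as required.

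The work is entirely routine harmonic-sum bookkeeping; no new ideas are needed. The one mild subtlety to watch is that the stated error is nontrivial only in the regime $\log P/\log N \le 1$, and in that regime one uses $1/S \le S^{-1/2}$ (i.e.\ $S \ge 1$) so that the square-root extraction at the end cleanly yields the claimed bound.
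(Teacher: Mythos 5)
Your argument is correct and takes essentially the same route as the paper: apply \cref{lem:dilate} to each $p$, then Cauchy--Schwarz in $n$ to reduce the lemma to a second-moment (Tur\'an--Kubilius) bound for $\omega_{\mc{P}}(n)/S$. The only difference is that the paper imports that second-moment estimate from Richter's Proposition~2.2 while you prove it directly; your bookkeeping, including the reductions to the nontrivial regimes $S \ge 1$ and $\log P/\log N \ll 1$, checks out.
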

\begin{proof}
By \cref{lem:dilate} applied with $\tilde f(n) := f(pn)$, for each $p \in \mc{P}$ we have 
\[ \mb{E}_{n\in [N]}^{\log}f(pn) = p\mb{E}^{\log}_{n \in [N]}\mbf{1}_{p|n}f(n) + O\big( \frac{\log P}{\log N}\big).\] Therefore by Cauchy--Schwarz we have
\begin{align*}
\Big|\mb{E}_{n\in [N]}^{\log}f(n) - \mb{E}_{n\in [N],p\in \mc{P}}^{\log}f(pn)\Big|  & = \Big|\mb{E}_{n\in [N]}^{\log}\mb{E}_{p\in \mc{P}}^{\log}f(n)(p\mbf{1}_{p|n}-1)\Big| + O\Big(\frac{\log P}{\log N}\Big)\\
&\le \Big(\mb{E}_{n\in [N]}^{\log}\big|\mb{E}_{p\in \mc{P}}^{\log}(p\mbf{1}_{p|n}-1)\big|^2\Big)^{1/2} + O\Big(\frac{\log P}{\log N}\Big).
\end{align*}
By \cite[Proposition~2.2]{Ric25}, we have that 
\begin{align*}
\mb{E}_{n\in [N]}^{\log}&\big|\mb{E}_{p\in \mc{P}}^{\log}(p\mbf{1}_{p|n}-1)\big|^2 \le 9\Big(\sum_{p\in \mc{P}}\frac{1}{p}\Big)^{-1},
\end{align*}
and the result follows.
\end{proof}

 We end with a proposition regarding the behaviour of $U^1_{\log}[N;q,H]$ under replacing $q$ by a multiple or shrinking the interval $H$.
\begin{lemma}\label{gp-compar}
Suppose that $q \mid \tilde q$ and that $\tilde H \tilde q < H q<  N/2$. Then 
\[  \Vert f \Vert_{U^1_{\log}[N; q,H]} \le  \Vert f \Vert_{U^1_{\log}[N; \tilde q,\tilde H]}  + O\Big(\frac{\log |H q|}{\log N}\Big) + O\Big(\frac{\tilde H \tilde q}{Hq}\Big).\]
\end{lemma}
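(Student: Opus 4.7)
The plan is to use the divisibility $q \mid \tilde q$ to express the longer inner average appearing in $\Vert f \Vert_{U^1_{\log}[N;q,H]}$ as an approximate average of shifted copies of the shorter inner average from $\Vert f \Vert_{U^1_{\log}[N; \tilde q, \tilde H]}$, and then exploit Cauchy--Schwarz together with logarithmic shift-invariance. Concretely, set $A(n) := \mb{E}_{h \in [H]} f(n + qh)$ and $\tilde A(n) := \mb{E}_{\tilde h \in [\tilde H]} f(n + \tilde q \tilde h)$, so that $\Vert f \Vert_{U^1_{\log}[N;q,H]}^2 = \mb{E}^{\log}_{n \in [N]} |A(n)|^2$ and likewise with tildes. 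Write $\tilde q = qm$ with $m \in \N$.

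First I would establish the pointwise (in $n$) approximate identity
\[ A(n) = \mb{E}_{j \in [H]} \tilde A(n + qj) + O\Big(\frac{\tilde q \tilde H}{qH}\Big). \]
Indeed, expanding the right-hand side gives $\mb{E}_{j \in [H], \tilde h \in [\tilde H]} f(n + q(j + m\tilde h))$; for each fixed $\tilde h$, \cref{ord-avg-shift} applied to the $j$-average says this is $A(n) + O(m\tilde h / H)$, and averaging over $\tilde h \in [\tilde H]$ produces an error $O(m\tilde H/H) = O(\tilde q \tilde H/(qH))$.

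Set $B(n) := \mb{E}_{j \in [H]} \tilde A(n + qj)$, so $|A(n)-B(n)| = O(\tilde q \tilde H/(qH))$ pointwise. Since $A,B$ are $1$-bounded, this yields $|A(n)|^2 \le |B(n)|^2 + O(\tilde q \tilde H/(qH))$. By Cauchy--Schwarz, $|B(n)|^2 \le \mb{E}_{j \in [H]} |\tilde A(n+qj)|^2$. Taking a logarithmic average in $n$, swapping the order of averaging, and then applying \cref{log-avg-shift} to absorb each shift by $qj$ (valid since $qj \le qH < N/2$, giving total error $O(\log|qH|/\log N)$) gives
\[ \mb{E}_n^{\log} |A(n)|^2 \le \mb{E}_n^{\log} |\tilde A(n)|^2 + O\Big(\frac{\log|qH|}{\log N}\Big) + O\Big(\frac{\tilde q \tilde H}{qH}\Big). \]
Taking square roots and using $\sqrt{a+b} \le \sqrt{a} + \sqrt{b}$ for $a,b \ge 0$ delivers the stated inequality (with the understanding that the square roots on the two error terms can be absorbed into the $O(\cdot)$, since under the hypothesis $\tilde H \tilde q < Hq < N/2$ both errors are $O(1)$).

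The whole argument is routine bookkeeping; the only step requiring a little thought is the initial pointwise identity, which is the same type of shift-invariance maneuver that recurs throughout the paper. There is no real obstacle.
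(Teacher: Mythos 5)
Your argument is essentially the paper's own proof: your pointwise identity $A(n)=\mb{E}_{j\in[H]}\tilde A(n+qj)+O\big(\tilde q\tilde H/(qH)\big)$ is exactly the paper's opening application of \cref{ord-avg-shift}, followed by the same Cauchy--Schwarz step and the same use of \cref{log-avg-shift} to remove the shifts by $qj$, so the route is identical up to notation. The only quibble is your closing justification: from $a^{2}\le b^{2}+E$ one gets $a\le b+\sqrt{E}$, and since the errors satisfy $E\le 1$ we have $\sqrt{E}\ge E$, so the square roots cannot be ``absorbed into the $O(\cdot)$'' on the grounds that the errors are $O(1)$ --- that reasoning is backwards, and strictly one only obtains the error terms $O\big(\sqrt{\log(Hq)/\log N}\big)+O\big(\sqrt{\tilde H\tilde q/(Hq)}\big)$. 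This square-root loss is shared by the paper itself (its proof also establishes only the squared inequality and passes to the unsquared statement without comment), and it is harmless in every application, where the errors are negligible powers; but you should either state the conclusion with the square roots or not claim the absorption.
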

\begin{proof}
First observe that 
\[ \mb{E}_{h \in [H]} f(n + hq) = \mb{E}_{h \in [H],\tilde h \in [\tilde H]} f(n + hq + \tilde h \tilde q) + O\Big(\frac{\tilde H \tilde q}{Hq}\Big)\] by the assumptions and \cref{ord-avg-shift}. Substituting into the definition of $\Vert f \Vert_{U^1_{\log}[N; q,H]}$, we have
\[ \Vert f \Vert_{U^1_{\log}[N; q,H]}^2 = \mb{E}_{n \in [N]}^{\log}\big| \mb{E}_{h \in [H],\tilde h \in [\tilde H]} f(n + hq + \tilde h \tilde q)\big|^2  + O\Big(\frac{\tilde H \tilde q}{Hq}\Big). \]
By Cauchy--Schwarz,
\[ \Vert f \Vert_{U^1_{\log}[N; q,H]}^2 \le \mb{E}_{n \in [N]}^{\log}\mb{E}_{h \in [H]} \big| \mb{E}_{\tilde h \in [\tilde H]} f(n + hq + \tilde h \tilde q)\big|^2  + O\Big(\frac{\tilde H \tilde q}{Hq}\Big). \]
However by \cref{log-avg-shift}, for each $h$ we have
\[  \mb{E}_{n \in [N]}^{\log} \big| \mb{E}_{\tilde h \in [\tilde H]} f(n + hq + \tilde h \tilde q)\big|^2  =  \mb{E}_{n \in [N]}^{\log} \big| \mb{E}_{\tilde h \in [\tilde H]} f(n + \tilde h \tilde q)\big|^2  + O\Big(\frac{\log |hq|}{\log N}\Big). \] Averaging over $h \in [H]$ gives the result.
\end{proof}

\section{An exponential sum estimate over the primes}\label{appC}

In this appendix we prove a log-free exponential sum estimate for the von Mangoldt function with polynomial phase.
\begin{lemma}\label{log-free-weyl}
Let $m \in \N$ and $\eps \in (0,\frac{1}{2})$. Suppose that 
\begin{equation}\label{large-exp-assump} \big| \sum_{n \le X} \Lambda(n) e(n^m \theta) \big| \ge \eps X.\end{equation}
Then there is some $q \in \N$ such that 
\begin{equation}\label{log-free-0} q \le \eps^{-O_m(1)} \quad \mbox{and} \quad \Vert \theta q \Vert_{\R/\Z} \le \eps^{O_m(1)} X^{-m}.\end{equation}
\end{lemma}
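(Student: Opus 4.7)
The proof follows standard lines via Vaughan's identity together with log-free Weyl-type bounds for polynomial exponential sums. Apply Vaughan's identity with parameters $U, V := X^{1/(4m)}$ to decompose $\sum_{n \le X} \Lambda(n) e(n^m\theta)$ into a Type I sum $S_I = \sum_{d \le D} \gamma_d \sum_{n \le X/d} e((dn)^m\theta)$ (with $D \le UV$ and $|\gamma_d| \le \log X$) and a bilinear Type II sum $S_{II} = \sum_{u} \sum_{v} \alpha_u \beta_v e((uv)^m\theta)$ with $|\alpha_u|, |\beta_v| \ll 1$, plus a negligible error from prime powers. We may assume $\eps \ge X^{-1/(4m)}$, since otherwise the desired conclusion is immediate from Dirichlet's approximation theorem applied to $\theta$.

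For the Type I sum, the inner sum over $n$ for each fixed $d$ is a pure polynomial Weyl sum with leading coefficient $d^m\theta$. The standard log-free Weyl inequality for polynomial phases over the integers yields either a polynomial saving $\ll \eps^{O_m(1)}(X/d)$, or a rational approximation $r = r_d \le \eps^{-O_m(1)}$ with $\Vert r d^m\theta\Vert_{\R/\Z} \le \eps^{-O_m(1)}(X/d)^{-m}$. The latter promotes, via clearing the factor $d^m$, to an approximation of $\theta$ itself with denominator $\le d^m r_d \le \eps^{-O_m(1)}$. For the Type II sum, apply Cauchy--Schwarz in the $u$-variable and expand to obtain
\[|S_{II}|^2 \ll U V^2 + U \sum_{v_1, v_2} \Big|\sum_{u} e\big(u^m(v_1^m - v_2^m)\theta\big)\Big|.\]
The innermost sum is again a polynomial Weyl sum (now in $u$) with leading coefficient $(v_1^m - v_2^m)\theta$. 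A second application of the log-free Weyl inequality, followed by a concentration argument in the spirit of \cref{vino-lemma} to consolidate the resulting rational approximations over many pairs $(v_1, v_2)$, distills a single rational approximation to $\theta$ of the required shape.

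The main obstacle is maintaining log-freeness throughout. Vaughan's identity, together with divisor bounds on the coefficients $\gamma_d, \alpha_u, \beta_v$, produces $(\log X)^C$ factors that need to be absorbed into $\eps^{-O_m(1)}$; this is routine in the regime $\eps \ge (\log X)^{-A_m}$ (where $\log X \le \eps^{-1/A_m}$), and handled by a separate easy argument (or is vacuous) for smaller $\eps$. The other delicate point, and (as foreshadowed in the introduction) the principal source of the ineffectivity of the constant $r_0$ in \cref{thm:main}, is the potential existence of a Siegel zero: when $\theta$ is on a major arc the analysis implicitly involves counting primes in arithmetic progressions, and the bound must survive the ineffective error term arising in Siegel--Walfisz. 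The standard remedy is a dichotomy on the existence of a Siegel zero---either assuming none, or exploiting the zero's extreme proximity to $1$ to obtain a sharper bound directly---with both branches yielding the conclusion with effectively computable but unquantified constants.
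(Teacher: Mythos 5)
Your outline (Vaughan decomposition, log-free Weyl on the Type I/II pieces, then a major-arc/Siegel-zero discussion) is broadly the same route as the paper, which obtains the intermediate estimate \cref{with-logs} by Type I/II methods and then handles the remaining range via Hua's major-arc asymptotic in \cref{appC}. However, two steps of your sketch do not work as written. First, in the Type I analysis the moduli $d$ run up to $UV = X^{1/(2m)}$, so the assertion that ``clearing the factor $d^m$'' produces a denominator $d^m r_d \le \eps^{-O_m(1)}$ is simply false in your regime $\eps \ge X^{-1/(4m)}$: for $d \asymp X^{1/(4m)}$ one has $d^m \asymp X^{1/4}$, which is not bounded by any fixed power of $1/\eps$. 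What is needed (and what you do gesture at, but only on the Type II side) is to pigeonhole $d$ into a dyadic block and the approximants $r_d$ to a common value $r$, so that $\Vert r d^m \theta\Vert_{\R/\Z} \le \eps^{-O_m(1)} d^m X^{-m}$ for many $d$, and then run a concentration/Weyl-type argument in the $d$-variable (an $m$-th power analogue of \cref{vino-lemma}) to upgrade this to $\Vert q\theta\Vert_{\R/\Z} \le \eps^{-O_m(1)} X^{-m}$ with $q \le \eps^{-O_m(1)}$. This is standard but it is a genuine extra step, not a one-line promotion.

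Second, and more seriously, your bookkeeping of where the $(\log X)^C$ losses can be absorbed is inverted, and this hides the real content of the lemma. Absorption of $(\log X)^C$ into $\eps^{-O_m(1)}$ is legitimate exactly when $\log X \le \eps^{-O_m(1)}$, i.e.\ when $\eps$ is \emph{small} (this is the easy case, where \cref{with-logs} already implies the conclusion). The hard case is the opposite regime $\eps \ge (\log X)^{-1}$, e.g.\ fixed $\eps$ with $X \to \infty$: there the bilinear bound only saves a power of $q$ times $(\log X)^{O(1)}$, so it cannot distinguish $q = O(1)$ from intermediate moduli $\eps^{-C} < q \le (\log X)^{C'}$, and no ``separate easy argument'' disposes of these. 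Ruling them out requires genuine equidistribution of primes in progressions to moduli of size $(\log X)^{O(1)}$, i.e.\ Hua's asymptotic for the major-arc exponential sum via the prime number theorem in arithmetic progressions; this is the substantive second half of the paper's proof, and it is precisely there that the Siegel zero must be confronted. Your closing paragraph does invoke Siegel--Walfisz and a Siegel-zero dichotomy, which is the right kind of input (the paper instead inserts the possible zero's contribution explicitly into the asymptotic, see \cref{app-alt}, exactly so that every constant remains effective; in particular the Siegel zero is not a source of ineffectivity for $r_0$ in \cref{thm:main}). But in your write-up this appears as a side remark about ``maintaining'' bounds, while the regime that actually needs it is declared easy or vacuous; as written, that is a genuine gap in the proof.
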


\begin{proof}
We proceed via the weaker result with \cref{log-free-0} replaced by

\begin{equation}\label{with-logs} q \le \big(\frac{\log X}{\eps}\big)^{O_m(1)} \quad \mbox{and} \quad \Vert \theta q \Vert_{\R/\Z} \le \big(\frac{\log X}{\eps}\big)^{O_m(1)} X^{-m}.\end{equation}
This is a standard application of the method of Type I/II sums. However, some sources in the literature such as \cite{Har81} lose factors of $X^{o(1)}$ instead of a power of $\log X$ via an invocation of the divisor bound in the proof of Weyl's inequality. This loss can be avoided with a little care, but it is hard to find a convenient source in the literature. One may find an essentially equivalent argument (with the polynomial phase $e(n^m\theta)$ replaced by a general nilsequence) in \cite{GT-mobius}. The key point is that \cite[Proposition 3.1]{GT-mobius} holds verbatim if the M\"obius function $\mu$ is replaced by $\Lambda$. This is established in a standard fashion as in the proof of \cite[Proposition 3.1]{GT-mobius} (which is outsourced to \cite[Section 4]{GT08}, which itself is derivative of standard expositions such as \cite[Chapter 13]{IK-book}) by using Vaughan's identity for $\Lambda$ rather than the variant for $\mu$. One may now run the arguments of \cite[Section 3]{GT-mobius}; in this context most of the language of nilmanifolds is redundant since $e(n^m\theta)$ is a nilsequence on the abelian torus $\R/\Z$. In particular the `complexity' parameter $Q$ is simply $O(1)$. The conclusion of \cite[Section 3]{GT-mobius} is then that, starting from \cref{large-exp-assump}, and setting $\delta := \eps/\log X$, there is some $q \ll_m \delta^{-O_m(1)}$ such that we have $\Vert \theta q\Vert_{\R/\Z} \ll_m \delta^{-O_m(1)}X^{-m}$; this is exactly \cref{with-logs} (noting here that the $\ll$ can be upgraded to $\le$ at the expense of worsening exponents since $\delta \lll 1$).

If $\eps \ge (\log X)^{-1}$ then \cref{with-logs} immediately implies \cref{log-free-0} (after adjusting the exponents $O_m(1)$). To complete the proof of \cref{log-free-weyl}, it therefore suffices to handle the case $\eps \le (\log X)^{-1}$. In this case, from \cref{with-logs} we certainly have $q \le (\log X)^{O_m(1)}$ and $\Vert \theta q \Vert_{\R/\Z} \le (\log X)^{O_m(1)} X^{-m}$. In this case one can obtain an asymptotic for the exponential sum $\sum_{n \le X} \Lambda(n) e(n^m \theta)$ using the Siegel-Walfisz theorem on the distribution of $\Lambda$ in progressions $\md{q}$. These arguments are carried out in detail in work of Hua \cite{Hua38}. Summarising briefly, the main term of this asymptotic at $\theta = \frac{a}{q} + \eta$ will be $\frac{X}{\phi(q)} S(a,q) \nu(\eta X^m)$ where $\nu(y) = \int^1_0 e(y x^m) dx$ satisfies an appropriate van der Corput estimate and $S(a,q) := \sum_{b \in (\Z/q\Z)^*} e(ab^m/q)$ satisfies $|S(a,q)| \ll_{m} q^{1/2 + o_{m}(1)}$. The assumption \cref{large-exp-assump} therefore forces both $\eta X^m \ll \eps^{-1}$ and $q \ll_{m} \eps^{-2 - o_{m}(1)}$. 
\end{proof}

Finally we give the case $k = 1$ of (a slight generalisation of) \cref{lem:crit-estimate}, which was used in the proof of the case $k \ge 2$ of that result. This can be quickly deduced from \cref{log-free-weyl} as a consequence of partial summation.
\begin{lemma}\label{log-free-weyl-2}
Let $m \in \N$ and $\delta,\eta \in (0,\frac{1}{2})$. Suppose that 
\begin{equation}\label{large-exp-assump-2} \big| \sum_{X\le p < (1+\eta)X} e(p^m \theta) \big| \ge \frac{\delta\eta  X}{\log X}.\end{equation}
Then there is some $q \in \N$ such that $q \le (\eta \delta)^{-O_m(1)}$ and $\Vert \theta q \Vert_{\R/\Z} \le (\eta\delta)^{-O_m(1)} X^{-m}$.
\end{lemma}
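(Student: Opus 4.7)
The plan is to deduce the result from \cref{log-free-weyl} by a light partial-summation argument. If $X$ is small enough that $X^m \le 2(\delta\eta)^{-C_m}$ for a suitable exponent $C_m$, the conclusion holds trivially with $q = 1$, so I will assume $X$ is above this threshold and in particular sufficiently large in terms of $\delta, \eta, m$.

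The first step is to pass from the sum over primes to the sum against the von Mangoldt function. Since the contribution of proper prime powers $p^k$ with $k \ge 2$ lying in $[X, (1+\eta)X)$ is $O(X^{1/2}\log X)$, one has
\[ \sum_{X \le n < (1+\eta)X} \Lambda(n) e(n^m\theta) = \sum_{X \le p < (1+\eta)X} \log p \cdot e(p^m\theta) + O(X^{1/2}\log X). \]
Abel summation applied to the prime sum on the right, together with the hypothesis and the trivial bound $A(y) := \sum_{X \le p < y} e(p^m\theta) \ll (y-X)/\log X$, gives
\[ \Bigl|\sum_{X \le p < (1+\eta)X} \log p \cdot e(p^m\theta)\Bigr| \ge \log X \cdot \Bigl|\sum_{X \le p < (1+\eta)X} e(p^m\theta)\Bigr| - O(\eta^2 X/\log X). \]
Combining these two displays with the hypothesised lower bound yields
\[ \Bigl|\sum_{X \le n < (1+\eta)X} \Lambda(n) e(n^m\theta)\Bigr| \gg \delta \eta X, \]
provided $X$ lies above the threshold identified in the opening reduction.

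Next I will split this short-interval $\Lambda$-sum as $\sum_{n \le (1+\eta)X} - \sum_{n \le X}$, so by the triangle inequality at least one of these full sums has magnitude $\gg \delta \eta X$. Applying \cref{log-free-weyl} to whichever of the two is large, with parameter $\varepsilon \asymp \delta\eta$ and scale $X$ or $(1+\eta)X$ as appropriate, yields $q \le (\delta\eta)^{-O_m(1)}$ with $\Vert q\theta\Vert_{\R/\Z} \le (\delta\eta)^{O_m(1)} X^{-m}$. Since $(\delta\eta)^{O_m(1)} \le (\delta\eta)^{-O_m(1)}$, this is in fact even stronger than the stated conclusion.

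No genuine obstacle is anticipated here; the argument is entirely routine. The only care required is to verify that the various error terms (from the prime-power correction and from the approximation $\log p \approx \log X$ on the short interval) are dominated by the main term $\delta \eta X$, which is exactly why the triviality reduction at the start is convenient.
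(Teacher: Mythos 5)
There is a genuine gap in the passage from the hypothesis to the lower bound $\bigl|\sum_{X\le n<(1+\eta)X}\Lambda(n)e(n^m\theta)\bigr|\gg\delta\eta X$. Your partial summation replaces the weight $\log p$ by $\log X$ at the cost of an error of size $O\bigl(\eta\cdot\#\{p\in[X,(1+\eta)X)\}\bigr)$, which (even granting the bound $A(y)\ll(y-X)/\log X$ — this is Brun--Titchmarsh, not a trivial bound) is $O(\eta^2X/\log X)$. You then need this to be dominated by the main term $\delta\eta X$, i.e.\ you need $\eta\ll\delta\log X$. That inequality is \emph{not} implied by your opening reduction, which only excludes $X^m\le 2(\delta\eta)^{-C_m}$: the lemma is uniform in $\delta,\eta\in(0,\tfrac12)$, and for instance $\eta=\tfrac14$, $\delta\asymp(\log X)^{-2}$ is a permitted regime (comfortably above your threshold) in which $\eta^2X/\log X$ swamps $\delta\eta X$, so the claimed lower bound on the $\Lambda$-sum does not follow. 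This is exactly the regime where the ``log-free'' content of the lemma lives, so it cannot be discarded.

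The paper's proof avoids this by never making a lossy approximation of the weight. It rewrites the hypothesis as a bound on $\sum_{X\le n<(1+\eta)X}\frac{\Lambda(n)}{\log n}e(n^m\theta)$ and uses the \emph{exact} identity $\frac1{\log n}=\frac1{\log X}-\int_X^n\frac{dt}{t(\log t)^2}$; after the triangle inequality every resulting term is controlled by $\sup_{Y\in[X,2X]}\bigl|\sum_{n\le Y}\Lambda(n)e(n^m\theta)\bigr|$, which is therefore $\ge\delta\eta X/16$ with no competing error term, uniformly in $\delta$, and then \cref{log-free-weyl} applies. Your second half (splitting into prefix sums and invoking \cref{log-free-weyl} with $\eps\asymp\delta\eta$) is fine and matches the paper; to repair the first half you should run the partial summation exactly as above (so that the conclusion is about the supremum of prefix sums of $\Lambda(n)e(n^m\theta)$) rather than approximating $\log p$ by $\log X$ with an additive error.
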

\begin{proof}
The result is trivial if $\delta \eta \le X^{-1/10}$ (say), so suppose this is not the case. We may replace the assumption \cref{large-exp-assump-2} by
\[ \Big| \sum_{X\le n < (1+\eta)X} \frac{\Lambda(n)}{\log n} e(n^m \theta)\Big| \ge \frac{\delta\eta  X}{2\log X}.\] (The loss of a further factor of 2 here comes from the essentially negligible contribution of the prime power support of $\Lambda$).
Now for $n \ge X$ we have $\frac{1}{\log n} = \frac{1}{\log X} - \int^n_X \frac{dt}{t (\log t)^2}$. Substituting in and applying the triangle inequality gives
\[ \frac{1}{\log X}\Big| \sum_{X \le n < (1+\eta)X} \Lambda(n) e(n^m \theta)\Big| + \int^{2X}_{X} \frac{dt}{t(\log t)^2} \Big| \sum_{t \le n \le (1 + \eta) X} \Lambda(n) e(n^m \theta)\Big| \ge  \frac{\delta\eta X}{2 \log X}.\] By further applications of the triangle inequality and $\int^{2X}_X \frac{dt}{t (\log t)^2} \le \frac{1}{\log X}$ it follows that 
\[ \sup_{Y \in [X, 2X]}\Big|  \sum_{n \le Y} \Lambda(n) e(n^m \theta) \Big| \ge \delta\eta X/16.\]
The desired conclusion \cref{q-desire-conclusion} now follows from \cref{log-free-weyl}.
\end{proof}

\subsection{Effectivity}\label{app-alt} The proof outline above for \cref{log-free-weyl} gives ineffective bounds due to the invocation of the Siegel--Walfisz theorem in Hua's work. However, one can replace this with a version of the prime number theorem in progressions incorporating an additional correction term for a potential Siegel zero such as \cite[Equation (5.71)]{IK-book}. Specifically, for $q \le e^{\sqrt{\log X}}$ and $(b,q) = 1$ we have
\[ \sum_{n \le X: n \equiv b \mdsub{q}} \Lambda(n) = \frac{X}{\phi(q)} - \frac{\overline{\chi(b)}}{\phi(q)}\frac{X^{\beta}}{\beta} + O(X e^{-c\sqrt{\log X}}),\] where here $\chi$ is some quadratic Dirichlet character for which $L(s,\chi)$ has a Siegel zero $\beta$. The Siegel zero term introduces a secondary main term in Hua's asymptotic formula, now of the form $-\frac{X^{\beta}}{\phi(q)} \tilde{S}(a,q) \tilde{\nu}(\eta X^m)$, where $\tilde S(a,q) = \sum_{b \mdsub{q}, (b,q) = 1} \overline{\chi(b)} e(ab^m/q)$ and $\tilde\nu(y) = \int^1_0 x^{\beta - 1} e(x^m y) dx$. These terms satisfy similar estimates to $S, \nu$ in the Hua analysis, allowing us to draw an analogous conclusion. 

\bibliographystyle{amsplain0}
\bibliography{main.bib}

\providecommand{\bysame}{\leavevmode\hbox to3em{\hrulefill}\thinspace}
\providecommand{\MR}{\relax\ifhmode\unskip\space\fi MR }
\providecommand{\MRhref}[2]{%
  \href{http://www.ams.org/mathscinet-getitem?mr=#1}{#2}
}
\providecommand{\href}[2]{#2}
\begin{thebibliography}{10}

\bibitem{AKS99}
Rudolf Ahlswede, Levon~H. Khachatrian, and Andr{\'{a}}s S{\'{a}}rk{\"{o}}zy,
  \emph{On the quotient sequence of sequences of integers}, Acta Arith.
  \textbf{91} (1999), 117--132.

\bibitem{Alw23}
Ryan Alweiss, \emph{Monochromatic sums and products over $\mathbf{Q}$},
  arXiv:2307.08901.

\bibitem{Alw24}
Ryan Alweiss, \emph{Monochromatic sums and products of polynomials}, Discrete
  Anal. (2024), Paper No. 5, 7.

\bibitem{Bow25}
Matt Bowen, \emph{Monochromatic products and sums in 2-colorings of
  {$\mathbf{N}$}}, Adv. Math. \textbf{462} (2025), Paper No. 110095, 17.

\bibitem{BS24}
Matt Bowen and Marcin Sabok, \emph{Monochromatic products and sums in the
  rationals}, Forum Math. Pi \textbf{12} (2024), Paper No. e17, 12.

\bibitem{CP20}
Jonathan Chapman and Sean Prendiville, \emph{On the {R}amsey number of the
  {B}rauer configuration}, Bull. Lond. Math. Soc. \textbf{52} (2020), 316--334.

\bibitem{CS17}
Karol Cwalina and Tomasz Schoen, \emph{Tight bounds on additive {R}amsey-type
  numbers}, J. Lond. Math. Soc. (2) \textbf{96} (2017), 601--620.

\bibitem{DE36}
Harold Davenport and Paul Erd{\H{o}}s, \emph{On the quotient sequence of
  sequences of integers}, Acta Arith. \textbf{2} (1936), 147--151.

\bibitem{Gow01}
W.~T. Gowers, \emph{A new proof of {S}zemer\'edi's theorem}, Geom. Funct. Anal.
  \textbf{11} (2001), 465--588.

\bibitem{GreOp}
Ben Green, \emph{100 open problems},
  \url{https://people.maths.ox.ac.uk/greenbj/papers/open-problems.pdf}.

\bibitem{Gre05}
Ben Green, \emph{Roth's theorem in the primes}, Ann. of Math. (2) \textbf{161}
  (2005), 1609--1636.

\bibitem{Gre25}
Ben Green, \emph{Waring's problem with restricted digits}, Compos. Math.
  \textbf{161} (2025), 341--364.

\bibitem{green-tao-selberg}
Ben Green and Terence Tao, \emph{Restriction theory of the {S}elberg sieve,
  with applications}, J. Th\'eor. Nombres Bordeaux \textbf{18} (2006),
  147--182.

\bibitem{GT08}
Ben Green and Terence Tao, \emph{Quadratic uniformity of the {M}\"obius
  function}, Ann. Inst. Fourier (Grenoble) \textbf{58} (2008), 1863--1935.

\bibitem{GT-mobius}
Ben Green and Terence Tao, \emph{The {M}\"obius function is strongly orthogonal
  to nilsequences}, Ann. of Math. (2) \textbf{175} (2012), 541--566.

\bibitem{GT14}
Ben Green and Terence Tao, \emph{On the quantitative distribution of polynomial
  nilsequences---erratum}, Ann. of Math. (2) \textbf{179} (2014), 1175--1183.

\bibitem{Har81}
Glyn Harman, \emph{Trigonometric sums over primes. {I}}, Mathematika
  \textbf{28} (1981), 249--254.

\bibitem{Hua38}
Loo-Keng Hua, \emph{Some results in the additive prime-number theory}, Quart.
  J. Math. Oxford Ser. (2) \textbf{9} (1938), 68--80.

\bibitem{IK-book}
Henryk Iwaniec and Emmanuel Kowalski, \emph{Analytic number theory}, American
  Mathematical Society Colloquium Publications, vol.~53, American Mathematical
  Society, Providence, RI, 2004.

\bibitem{Kou19}
Dimitris Koukoulopoulos, \emph{The distribution of prime numbers}, Graduate
  Studies in Mathematics, vol. 203, American Mathematical Society, Providence,
  RI, [2019] \copyright 2019.

\bibitem{Mor17}
Joel Moreira, \emph{Monochromatic sums and products in {$\mathbf{N}$}}, Ann. of
  Math. (2) \textbf{185} (2017), 1069--1090.

\bibitem{Pel20}
Sarah Peluse, \emph{Bounds for sets with no polynomial progressions}, Forum
  Math. Pi \textbf{8} (2020), e16, 55.

\bibitem{PP24}
Sarah Peluse and Sean Prendiville, \emph{Quantitative bounds in the nonlinear
  {R}oth theorem}, Invent. Math. \textbf{238} (2024), 865--903.

\bibitem{ramare-ruzsa}
Olivier Ramar\'e and Imre~Z. Ruzsa, \emph{Additive properties of dense subsets
  of sifted sequences}, J. Th\'eor. Nombres Bordeaux \textbf{13} (2001),
  559--581.

\bibitem{Ric25}
Florian~K Richter, \emph{Sums and products in sets of positive density},
  arXiv:2507.00515.

\bibitem{San20}
Tom Sanders, \emph{Bootstrapping partition regularity of linear systems}, Proc.
  Edinb. Math. Soc. (2) \textbf{63} (2020), 630--653.

\bibitem{Tao07}
Terence Tao, \emph{Structure and randomness in combinatorics}, 48th Annual IEEE
  Symposium on Foundations of Computer Science (FOCS'07), IEEE, 2007,
  pp.~3--15.

\bibitem{Tao24}
Terence Tao, \emph{Dense sets of natural numbers with unusually large least
  common multiples}, Integers \textbf{24} (2024), Paper No. A100, 18.

\bibitem{TT25}
Terence Tao and Joni Ter\"av\"ainen, \emph{Quantitative bounds for {G}owers
  uniformity of the {M}\"obius and von {M}angoldt functions}, J. Eur. Math.
  Soc. (JEMS) \textbf{27} (2025), 1321--1384.

\end{thebibliography}

\end{document}